\def\R{{\mathbb R}}
\def\N{{\mathbb N}}
\def\Sp{{\mathbb S}}  
\def\Discr{\Delta}
\def\Diff{{\operatorname{Diff}}}
\def\Adj{{\mathcal{A}}}
\def\Immr{{\operatorname{Imm}^r([0,1], \R^d)}}
\def\Imm{\operatorname{Imm}}
\def\GraphrA{{\operatorname{Graph}^r(\Adj)}}
\def\GraphnA{{\operatorname{Graph}^n(\Adj)}}
\def\Graphr{{\operatorname{Graph}^r}}
\def\Graphn{{\operatorname{Graph}^n}}
\def\WGraphrA{{\operatorname{\mathcal{W}Graph}^r(\Adj)}}
\def\WGraphnA{{\operatorname{\mathcal{W}Graph}^n(\Adj)}}
\def\WGraphr{{\operatorname{\mathcal{W}Graph}^r}}
\def\WGraphn{{\operatorname{\mathcal{W}Graph}^n}}
\def\WGraph{{\operatorname{\mathcal{W}Graph}}}
\def\Graphtwo{{\operatorname{Graph}^2}}
\def\GraphtwoA{{\operatorname{Graph}^2(\mathcal{A})}}
\def\K{\mathcal{K}}
\def\H{\mathcal{H}}
\def\V{\mathcal{V}}
\def\argmin{{\operatorname{argmin}}}
\def\deltarho{\delta\hspace{-.1em} \rho}
\def\red{\textcolor{red}}
\title{A new variational model for shape graph registration with partial matching constraints
\thanks{Y. Sukurdeep and N. Charon were supported by NSF grants 1945224 and 1953267. M. Bauer was supported by NSF grants 1953244 and 1912037}}
\author{Yashil Sukurdeep\thanks{Department of Applied Mathematics and Statistics, Johns Hopkins University.}
\and
Martin Bauer\thanks{Department of Mathematics, Florida State University.}
\and
Nicolas Charon\thanks{Center for Imaging Science \& Department of Applied Mathematics and Statistics, Johns Hopkins University.}}
\begin{document}

\maketitle

\begin{abstract}
This paper introduces a new extension of Riemannian elastic curve matching to a general class of geometric structures, which we call (weighted) shape graphs, that allows for shape registration with partial matching constraints and topological inconsistencies. Weighted shape graphs are the union of an arbitrary number of component curves in Euclidean space with potential connectivity constraints between some of their boundary points, together with a weight function defined on each component curve. The framework of higher order invariant Sobolev metrics is particularly well suited for constructing notions of distances and geodesics between unparametrized curves. The main difficulty in adapting this framework to the setting of shape graphs is the absence of topological consistency, which typically results in an inadequate search for an exact matching between two shape graphs. We overcome this hurdle by defining an inexact variational formulation of the matching problem between (weighted) shape graphs of any underlying topology, relying on the convenient measure representation given by varifolds to relax the exact matching constraint. We then prove the existence of minimizers to this variational problem when we choose Sobolev metrics of sufficient regularity and a total variation (TV) regularization on the weight function. We propose a numerical optimization approach which adapts the smoothed fast iterative shrinkage-thresholding (SFISTA) algorithm to deal with TV norm minimization and allows us to reduce the matching problem to solving a sequence of smooth unconstrained minimization problems. We finally illustrate the capabilities of our new model through several examples showcasing its ability to tackle partially observed and topologically varying data. 
\end{abstract}

\begin{keywords}
  elastic shape analysis, shape graphs, partial matching, Sobolev metrics, varifold, total variation norm, SFISTA algorithm.
\end{keywords}

\begin{AMS}
  49J15, 49Q20, 53A04
\end{AMS}

\section{Introduction} 

Fueled by transformative progress in medical and biological imaging, the past decade has seen a remarkable explosion in the quantity and quality of data whose predominantly interesting features are of geometric and topological nature. These developments urged the need for new mathematical and algorithmic approaches for dealing with such data, which led to the growth of the new areas of (geometric and topological) data and shape analysis, see e.g.~\cite{younes2010shapes,srivastava2016functional,kendall1989survey,bauer2014overview,edelsbrunner2008persistent}. 

Among the plethora of techniques that have emerged, several methods from the so-called field of elastic shape analysis (ESA) have proven successful in numerous applications with geometric data such as functions, curves or surfaces~\cite{srivastava2016functional}. The guiding principle in ESA lies in the concept of invariance: an object's shape is the information that remains after factoring out the action of certain groups. This includes the finite dimensional rigid transformation groups that act on the ambient space of the objects (rotations, translations, scalings) and, more importantly, also includes the infinite dimensional reparametrization group on the domain of the function, curve or surface. The latter action encodes the usually unknown point-to-point correspondences between geometric objects, and finding these correspondences, which is known as shape registration, forms the basis for comparing, classifying, clustering and performing statistical analysis on geometric data. 

However, shape registration is the main source of difficulty in the area of ESA. While other techniques often solve for the registration between geometric objects as a pre-processing step under a certain metric or objective function, and follow it up with statistical analysis that is independent of this registration metric, ESA is an approach where both registration and comparisons of shapes are performed in a unified fashion, under the same metric. ESA is performed in a Riemannian setting using a metric that is invariant to the action of shape-preserving transformations, and thereby descends to a Riemannian metric on the quotient shape space. For functions, curves and surfaces, this framework is well-developed both from an algorithmic and a theoretical point of view. In particular, for the case of invariant Sobolev metrics on spaces of curves, powerful existence results for optimal reparametrizations and optimal deformations have been obtained~\cite{lahiri2015precise,bruveris2014geodesic,nardi2016geodesics,bruveris2016optimal,bauer2020sobolev,bauer2019relaxed}, and efficient numerical algorithms for the computation of the resulting elastic distances and geodesics are available~\cite{srivastava2016functional,bauer2017numerical,bauer2019relaxed}. Notably, the so called Square Root Velocity (SRV) framework~\cite{srivastava2016functional,srivastava2010shape} enables near real-time computations under a certain type of first order Sobolev metric, even in the context of large datasets.

In recent work by Srivastava et. al.~\cite{srivastava2020advances,duncan2018statistical,guo2020statistical,guo2020representations,wang2020statistical} the SRV framework has been extended to more general geometric objects such as shape graphs, i.e., shapes with network or branching structures where each branch is a geometric curve. In their framework, ESA is combined with graph matching algorithms in order to determine correspondences between shape graphs. In particular, the authors use standard methods developed for the space of curves in order to find correspondences between each separate branch of the shape graphs. Aside from ESA techniques, other methods to tackle the comparison and/or matching of shape graphs include the model of the space of unlabelled trees with the quotient Euclidean distance \cite{jain2009structure,feragen2012toward,calissano2020populations,feragen2020statistics}, or diffeomorphic registration approaches such as in \cite{pan2016current}. Yet, the former approach requires optimizing over permutations of the branches and more importantly restricts to modeling each branch by a finite set of landmark points and comparing those through the usual Euclidean metric, which does not embed the fundamental reparametrization invariance of the shape space of geometric curves. Diffeomorphic transformations on the other hand typically impose stronger constraints on curve deformations and usually suffer from higher numerical cost compared to elastic models, as pointed out in \cite{bauer2019relaxed}.

Another related persistent challenge in shape analysis is the issue of partial correspondences or changes in topology. This is of particular importance in the context of applications involving partial observations (e.g. corrupted data) and/or topological inconsistencies (e.g. airway vessels or brain arteries with distinct graph structure). Several works have attempted to address the issue of partial matching, in particular frameworks that build on the Gromov-Hausdorff distance \cite{bronstein2009partial}, or functional maps \cite{rodola2017partial} in the context of 3D surfaces, and very recently diffeomorphic models \cite{kaltenmark2019estimation,antonsanti2021partial} in which asymmetric data attachment terms are introduced in the registration functional. In ESA, however, not much work has been achieved to date. In the thesis \cite{robinson2012functional}, a certain type of partial matching constraint was included into the SRV framework, which essentially searches for a single ``subcurve'' of the source curve to be matched to a single ``subcurve'' of the target curve. Although this approach has led to promising results, it remains quite restrictive as to the class of partial correspondences that can be accounted for, e.g., it does not allow for several missing parts or different numbers of branches in the source and target.  

Finally, while the SRV framework has tremendous algorithmic advantages, it corresponds to a very specific choice of Riemannian metric, which may be undesirable in situations where one would prefer to have a data-driven selection of the metric, see~e.g. the results of Needham and Kurtek~\cite{needham2020simplifying}. In the context of more general metrics, such as higher order Sobolev metrics, the main challenge is to find an efficient way to deal with the action of the reparametrization group. In a recent publication by the authors \cite{bauer2019relaxed}, an efficient numerical approach for the registration of standard Euclidean curves was proposed, in which the endpoint constraint in the matching functional is relaxed based on metrics from geometric measure theory.  

\subsection*{Contributions of the article}
Our first contribution in this article is to introduce an extension of the inexact matching framework for general elastic metrics of \cite{bauer2019relaxed} to the space of shape graphs, thereby also complementing the recent work of Srivastava et. al. Our model differs from the SRV shape graph framework in several ways. Most significantly, instead of registering shape graphs by solving a series of independent exact matching problems which are preceded by a branch matching algorithm, we directly solve a single inexact matching problem where the matching of component curves from the shape graphs is geometrically driven through a varifold relaxation term. Secondly, our approach allows for a wide class of Riemannian metrics instead of focusing on one particular metric for computational ease. This allows us to recover a quite general result on the existence of minimizers to the shape graph matching variational problem (Theorem \ref{thm:existence_graph_match}), the proof of which leverages the recent theoretical results of \cite{bruveris2017completeness} for higher order elastic metrics combined with the derivation of specific semi-continuity properties for varifold relaxation terms.  

Yet, the main contribution and original motivation of the present article is to incorporate partial matching constraints and topological inconsistencies into the shape matching framework. Towards this end, we consider \textit{weighted} shape graphs, which augment the shape graph model with a spatially varying weight function. When registering weighted shape graphs, this weight function is jointly estimated together with the geometric deformation of the source shape graph, which allows us to discard or ``create'' specific parts of the shape graph thanks to the varifold relaxation term. By penalizing the weight variation through a TV-norm based regularization, we show that the existence of minimizers again holds in this generalized setting (Theorem \ref{thm:existence_weighted_graph_match}), which is the major theoretical result of the article. 

Finally, our model is accompanied with an open source implementation, available on Github\footnote{\url{https://github.com/charoncode/ShapeGraph_H2match}}. It builds up on the $H^2$-metrics matching code of the second and third author and collaborators, which relies on a discretization of the curves using smooth splines. In addition, we deal with the numerical optimization over the weight function with the non-smooth TV regularizer by adapting the smooth FISTA algorithm of \cite{tan2014smoothing}. This allows us to reduce the matching problem to solving a sequence of smooth unconstrained minimization problems on the spline control points and weights of the source shape graph. We illustrate the capabilities of our new model through several examples using both real and artificial data, thereby showcasing its ability to tackle partially observed and topologically varying data. 

\subsection*{Paper structure}
Our paper is structured as follows. In Section~\ref{sec:riemannian_metrics_on_curve_spaces}, we introduce the space of shape graphs, which will allow us to model geometric objects having arbitrary topological structures. In Section~\ref{sec:relaxed_matching_problem}, we formulate and analyze an inexact matching approach for comparing shape graphs having the same topology, which relies on a varifold-based relaxation of the exact registration problem. We then generalize our discussion to the setting of weighted shape graphs in Section~\ref{sec:model_weights}, where we introduce a variational approach for the registration of shape graphs having different topological structures in \eqref{eq:relaxed_weighted_shape_graph_match}, and prove the existence of solutions to this problem, which is the main theoretical result from this paper. Finally, we conclude the manuscript with a description of our proposed numerical approach for solving \eqref{eq:relaxed_weighted_shape_graph_match} in Section~\ref{sec:optim_algo}, and discuss numerical experiments on synthetic and real data in Section~\ref{sec:numerical_results}.

\section{Riemannian metrics on curves and shape graphs}
\label{sec:riemannian_metrics_on_curve_spaces}
In this section, we introduce key theoretical background on higher-order elastic Sobolev metrics, which we then use to define a reparametrization invariant Riemannian metric on the space of shape graphs.

\subsection{Metrics on open and closed curves}
\label{sec:curves}
We begin by reviewing the main definitions and known theoretical results on higher-order Sobolev metrics for closed and open curves, which we will rely on for the rest of this paper. A \textit{parametrized immersed curve} of regularity $r$ (with $r> 3/2$) in $\R^d$ is a mapping $c:D \rightarrow \R^d$ such that $c \in H^r(D,\R^d)$ (the space of Sobolev functions of order $r$ on $D$), where for all $\theta \in D$ we have $\partial_{\theta} c(\theta) \neq 0$. We shall denote the space of all such parametrized curves by $\Imm^r(D,\R^d)$. In this section, we consider the two cases where $D=[0,1]$ or $D=S^1$, corresponding to open or closed curves respectively. We also introduce the \textit{reparametrization group} $\Diff^r(D)$, which is the group of orientation-preserving $H^r$ diffeomorphisms of $D$, i.e., the space of all $\varphi \in H^r(D)$ such that $\partial_{\theta} \varphi(\theta) > 0$ for all $\theta$ and $\varphi^{-1} \in H^r(D)$. Note that the assumption $r>3/2$ ensures that the condition on the first derivative being non-zero is well-defined. For any immersed curve $c \in \Imm^r(D,\R^d)$ and $\varphi \in \Diff^r(D)$, we say that $c\circ \varphi \in  \Imm^r(D,\R^d)$ is a \textit{reparametrization} of $c$ by $\varphi$. Finally, for any $c\in \Imm^r(D,\R^d)$, we denote the arc length integration form by $ds = |\partial_{\theta} c(\theta)| d\theta$, the total length of the curve by $\ell_c = \int_D ds$, and the arc length differentiation along $c$ by $\partial_s = \frac{1}{|\partial_{\theta} c(\theta)|} \partial_{\theta}$.

As an open subset of $H^r(D,\R^d)$, the space of parametrized curves $\Imm^r(D,\R^d)$ is an infinite dimensional (Hilbert) manifold, where the tangent space at any $c \in  \Imm^r(D,\R^d)$ is given by $H^r(D,\R^d)$. Any tangent vector $h$ to this manifold at the curve $c$ can be canonically identified with a vector field along the curve by simply viewing $h(\theta)$ as the tangent vector attached to the point $c(\theta)$. Now, one of the main goals in shape analysis is to endow this manifold of curves with a metric in order to define a notion of distance between pairs of curves, with Riemannian metrics being preferred as they also yield a notion of geodesics between the curves. While the usual Sobolev metric on $H^r(D,\R^d)$ at each $c\in  \Imm^r(D,\R^d)$ defined by 
\begin{equation}
    \|h\|_{H^r}^2 = \sum_{i=0}^{r} \int_D \langle\partial^i_\theta h,\partial^i_\theta h \rangle d\theta    
    \label{eq:Sobolev_norm}
\end{equation}
may seem like a natural choice, a critical issue when using it to compare the shape of two curves is its lack of invariance to the choice of parametrization for the curves. This prevents such a metric from descending to a metric on the true shape space of curves modulo reparametrizations, which we will introduce later in this section.

The construction of parametrization invariant Riemannian metrics on $\Imm^r(D,\R^d)$ has been the subject of intense study since the work of \cite{younes1998computable}. In this paper, we focus on the family of so-called invariant elastic Sobolev metrics \cite{michor2007overview,mennucci2008properties,srivastava2016functional}, and more specifically on two subclasses of these metrics on $\Imm^r(D,\R^d)$. For any $c \in \Imm^r(D,\R^d)$ and tangent vector $h \in H^r(D,\R^d)$, the metric takes one of the following forms: 
\begin{equation}
\label{eq:def_Sobolev_metric}
G_c^{n}(h,h) = \sum_{i=0}^{n} a_i \int_{D} \langle \partial_s^i h , \partial_s^i h \rangle ds ,
\end{equation}
which we will refer to as constant-coefficient Sobolev metrics of order $n$, or  
\begin{equation}
\label{eq:def_Sobolev_metric_length_inv}
G_c^{n}(h,h) = \sum_{i=0}^{n} a_i \ell_c^{2r-i} \int_{D} \langle \partial_s^i h , \partial_s^i h \rangle ds ,
\end{equation}
which are known as scale invariant Sobolev metrics of order $n$. In both equations, $n\leq r$ and $a_0,a_1,\ldots,a_n$ are positive coefficients weighting the different terms in the Sobolev metrics. Note that the key difference of \eqref{eq:def_Sobolev_metric} compared to a standard Sobolev norm is that the derivatives and integration are taken with respect to arc length. This is precisely what makes the metric invariant to reparametrizations in the sense that $G_{c\circ \varphi}^{n}(h\circ \varphi,h \circ \varphi) = G_c^{n}(h,h)$ for all $\varphi \in \Diff(D)$, which can be obtained by simple change of variable in the integrals. In the same way, the metric \eqref{eq:def_Sobolev_metric_length_inv} is also reparametrization invariant, while also being invariant to the action of scaling since it can be shown that for all $\lambda >0$, one has $G_{\lambda c}^{n}(\lambda h, \lambda h ) = G_c^{n}(h,h)$.

The Riemannian distance between two parametrized curves $c_0$ and $c_1$ in $\Imm^r(D,\R^d)$ is then given by the usual minimum of the energy taken over all paths connecting these two curves, specifically:
\begin{equation}
\label{eq:Riemannian_dist}
    \operatorname{dist}_{G^n}(c_0,c_1) = \inf\left\{\int_0^1 G_{c(t)}^{n}(\partial_t{c}(t),\partial_t{c}(t)) dt \right\} ,
\end{equation}
with the infimum being taken over all paths of immersions $c\in H^1([0,1],\Imm^r(D,\R^d))$ such that $c(0)=c_0$, $c(1)=c_1$. Here, $\partial_t{c}(t) \in H^r(D,\R^d)$ denotes the derivative with respect to $t$ of this path.

There are however several important subtleties compared to standard finite dimensional Riemannian geometry. One is the fact that \eqref{eq:Riemannian_dist} may only be a pseudo-distance. In fact for $n=0$, it is known since \cite{Michor2005,bauer2012vanishing} that $\operatorname{dist}_{G^n}$ is completely degenerate, i.e., that $\operatorname{dist}_{G^n}(c_0,c_1) = 0$ for any $c_0,c_1 \in \Imm^r(D,\R^d)$. Fortunately, whenever $r\geq 1$ (which we always assume here), it was shown that $\text{dist}_{G^n}$ is a true distance for both the metrics from \eqref{eq:def_Sobolev_metric} and \eqref{eq:def_Sobolev_metric_length_inv}, see~\cite{michor2007overview}. Yet another fundamental and desirable property of Riemannian distances is \textit{metric completeness}. For finite dimensional Riemannian manifolds, the Hopf-Rinow theorem guarantees that completeness of the corresponding metric space is equivalent to geodesic completeness and also equivalent to the existence of minimizing geodesics between any two points on the manifold. However, this equivalence no longer holds in the infinite dimensional case~\cite{atkin1975hopf} and therefore significant effort has been invested in the study of completeness properties for the parametrization invariant Sobolev metrics on immersed curves~\cite{bruveris2014geodesic,nardi2016geodesics,bruveris7completeness,bruveris2017completeness,bauer2020sobolev}. We summarize this collection of results by the following theorem, and note that we write the statement for $r=n$ for simplicity although this can be made more general: 

\begin{theorem}\label{thm:completeness_curve}
Assume that $n\geq 2$. Then, given the scale-invariant Sobolev metric \eqref{eq:def_Sobolev_metric_length_inv} and its associated geodesic distance \eqref{eq:Riemannian_dist}, the following holds:
\begin{remunerate}
    \item The space $\left(\Imm^n(D,\R^d),\operatorname{dist}_{G^n}\right)$ is a complete metric space.
    \item The space $\left(\Imm^n(D,\R^d),\operatorname{dist}_{G^n}\right)$ is geodesically complete i.e. solutions of the geodesic equation for any choice of initial condition are defined at all times.   
    \item For any $c_0, c_1$ in the same connected component of $\Imm^n(D,\R^d)$, there exists a minimizing geodesic in $\Imm^n(D,\R^d)$ between $c_0,c_1$, i.e., there exists $c\in H^1([0,1],\Imm^n(D,\R^d))$ achieving the infimum in \eqref{eq:Riemannian_dist}.
\end{remunerate}
For $D=S^1$ these statements also hold for constant-coefficient Sobolev metrics \eqref{eq:def_Sobolev_metric}.
\end{theorem}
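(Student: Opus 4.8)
The plan is to assemble the three statements from the completeness theory for parametrization-invariant Sobolev metrics developed in \cite{bruveris2014geodesic,bruveris2017completeness,bauer2020sobolev,nardi2016geodesics}, after checking that the metrics \eqref{eq:def_Sobolev_metric} and \eqref{eq:def_Sobolev_metric_length_inv} fall within the scope of those results; the genuine content lies in a few a priori estimates that I outline below. The structural starting point is that for $n \geq 2$ the Sobolev embedding $H^n(D,\R^d) \hookrightarrow C^1(D,\R^d)$ makes $\Imm^n(D,\R^d)$ an open subset of the Hilbert space $H^n(D,\R^d)$, so that the immersion condition $\partial_\theta c \neq 0$ is an open condition. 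On this open set I would first establish that $G^n$ is a \emph{strong} metric, meaning that on any $\operatorname{dist}_{G^n}$-bounded set the norm $G_c^n(\cdot,\cdot)^{1/2}$ is equivalent to the flat $H^n$ norm, with constants controlled by two-sided bounds on the length $\ell_c$ and on $\inf_\theta |\partial_\theta c|$. This is the step where the arc-length reformulation, and in the length-invariant case the scaling weights $\ell_c^{2r-i}$, are used to turn the coordinate-dependent metric into something comparable to a fixed Hilbert norm.

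For claim (i), I would argue that a $\operatorname{dist}_{G^n}$-Cauchy sequence cannot let the immersion degenerate: finite geodesic distance forces uniform upper and lower bounds on $\ell_c$ and on $\inf_\theta |\partial_\theta c|$, so by the norm equivalence above the sequence is Cauchy in $H^n$, converges to a limit $c_\infty \in H^n(D,\R^d)$, and the lower bound on $|\partial_\theta c|$ passes to the limit to keep $c_\infty$ an immersion; metric completeness follows. For claim (ii), the key inputs are that the geodesic spray of $G^n$ is smooth, which yields local existence and smooth dependence on the initial data, together with a priori estimates showing that along a geodesic neither the $H^n$ norm nor the lower bound on $|\partial_\theta c|$ can blow up in finite time, which rules out finite-time escape and extends solutions to all of $\R$.

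For claim (iii), I would emphasize that, since the infinite-dimensional Hopf--Rinow theorem fails in general \cite{atkin1975hopf}, existence of minimizers does not follow formally from (i) and (ii); instead one runs the direct method in the calculus of variations. I would take a minimizing sequence of paths of uniformly bounded energy joining $c_0$ and $c_1$, use the metric bounds to confine them to a fixed $\operatorname{dist}_{G^n}$-ball and thereby obtain uniform control in $H^1([0,1],H^n(D,\R^d))$, extract a weakly convergent subsequence, and conclude via weak lower semicontinuity of the energy that the weak limit is an admissible path attaining the infimum in \eqref{eq:Riemannian_dist}. The case $D=S^1$ with the constant-coefficient metric \eqref{eq:def_Sobolev_metric} is handled by the corresponding statements for closed curves, where the absence of boundary terms lets the same estimates close without the length-invariant weights.

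The step I expect to be the main obstacle is the pair of a priori estimates underlying (ii) and (iii): controlling the lower bound on $|\partial_\theta c|$ along geodesics and upgrading the minimizing-sequence compactness to a genuine minimizer, both of which are precisely the points where the cited completeness papers do the hard analytic work and which I would therefore invoke rather than reprove from scratch.
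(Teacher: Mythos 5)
The paper offers no proof of this theorem at all: it is explicitly presented as a summary of results from the cited completeness literature (Bruveris, Bauer--Michor--Mumford, et al.), which is exactly your approach --- verify the metrics \eqref{eq:def_Sobolev_metric} and \eqref{eq:def_Sobolev_metric_length_inv} fall under those results' hypotheses and invoke them rather than reprove the hard estimates. Your sketch of the internal structure of those proofs (equivalence of $G^n_c(\cdot,\cdot)^{1/2}$ with the flat $H^n$ norm on metric balls, non-degeneration of $|\partial_\theta c|$ and $\ell_c$ along Cauchy sequences, smooth geodesic spray plus a priori bounds for geodesic completeness, and the direct method for minimizing geodesics since Hopf--Rinow fails in infinite dimensions) is accurate and faithful to how the cited papers actually proceed, so the proposal is correct and takes essentially the same route as the paper.
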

This result justifies our focus, for the rest of this paper, on Sobolev metrics on the space of parametrized curves which are of order $n \geq 2$. Moreover, it is important to stress that completeness does not hold in the case of open curves ($D=[0,1]$) for constant-coefficient Sobolev metrics (see the counterexample of \cite{bauer2019relaxed}), which is also why the scale-invariant version of \eqref{eq:def_Sobolev_metric_length_inv} will be more relevant from a theoretical standpoint in the following sections. As a by-product of Theorem \ref{thm:completeness_curve}, one also obtains the following bounds relating invariant Sobolev metrics and the usual Sobolev norm, which we will rely on in our later proofs:
\begin{lemma}
\label{lemma:bounds_Sobolev_metrics}
Let $n\geq 2$ and $G^n$ the scale invariant metric from \eqref{eq:def_Sobolev_metric_length_inv}. Given $c_0 \in \Imm^n(D,\R^d)$ and a metric ball $B(c_0, \delta)$ of radius $\delta > 0$ in $\Imm^n(D,\R^d)$, there exists a constant $C > 0$ such that for any $c\in B(c_0, \delta)$, it holds that
\begin{equation}
C^{-1} \|h\|_{H^n}^2 \leq G^n_c(h,h)\leq C \|h\|_{H^n}^2
\end{equation}
for all $h\in H^n(D,\R^d)$, and we have the lower bound $|\partial_{\theta} c(\theta)|\geq C^{-1}$ for all $\theta \in D$. 
The same result also holds for the metric \eqref{eq:def_Sobolev_metric} when $D=S^1$.
\end{lemma}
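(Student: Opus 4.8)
The plan is to prove the norm equivalence in two stages: first a \emph{pointwise} equivalence at a fixed immersion $c$, where the constant is allowed to depend on the geometry of $c$; and then a uniformization of that constant over the metric ball $B(c_0,\delta)$, which is where the completeness theory enters. Throughout I would use that $D$ is one-dimensional, so that for $n\geq 2$ the Sobolev embeddings $H^n(D)\hookrightarrow C^{n-1}(D)$ and $H^{n-1}(D)\hookrightarrow C^0(D)$ hold, giving $L^\infty$ control of $\partial_\theta^k c$ for $k\leq n-1$ (with $\partial_\theta^n c$ only in $L^2$) and of $\partial_\theta^j h$ for $j\leq n-1$.

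For the pointwise estimate I would expand the arc-length derivatives in \eqref{eq:def_Sobolev_metric_length_inv}. Since $\partial_s = |\partial_\theta c|^{-1}\partial_\theta$, an iterated chain rule (a Fa\`a di Bruno type expansion) yields $\partial_s^i h = |\partial_\theta c|^{-i}\partial_\theta^i h + \sum_{j=1}^{i-1} b_{i,j}\,\partial_\theta^j h$, where each coefficient $b_{i,j}$ is a polynomial in $|\partial_\theta c|^{-1}$ and in the derivatives $\partial_\theta^k c$ with $k\leq i-j+1$. Converting the integrals via $ds = |\partial_\theta c|\,d\theta$, the leading term contributes $\int_D |\partial_\theta c|^{1-2i}|\partial_\theta^i h|^2\,d\theta$, which is comparable to $\int_D |\partial_\theta^i h|^2\,d\theta$ once $|\partial_\theta c|$ is bounded above and below. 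For the lower-order terms, all coefficients $b_{i,j}$ with $j\geq 2$ involve only derivatives of $c$ up to order $n-1$, hence lie in $L^\infty$; the single coefficient multiplying $\partial_\theta h$ may involve $\partial_\theta^n c\in L^2$, but this is harmless since $\partial_\theta h\in L^\infty$, so that term is bounded by $\|\partial_\theta h\|_{L^\infty}^2\,\|b\|_{L^2}^2 \leq C\|h\|_{H^n}^2$. This gives the upper bound $G^n_c(h,h)\leq C\|h\|_{H^n}^2$ with $C$ depending only on $\ell_c$, $\min_\theta|\partial_\theta c|$, and $\|c\|_{H^n}$. For the matching lower bound I would not track all terms but use coercivity: since every $a_i>0$, we have $G^n_c(h,h)\geq a_n\int_D |\partial_s^n h|^2\,ds + a_0\int_D |h|^2\,ds$ up to the positive length factors, and a Gagliardo--Nirenberg interpolation together with the same geometric bounds shows the right-hand side dominates $C^{-1}\|h\|_{H^n}^2$.

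The substance of the lemma, and the main obstacle, is to make the constant $C$ uniform over $c\in B(c_0,\delta)$, i.e. to bound $\ell_c$ above and below, bound $\|c\|_{H^n}$ above, and bound $\min_\theta|\partial_\theta c|$ below, all uniformly. There is an apparent circularity here, since the pointwise equivalence seems to require exactly the bounds we wish to establish; this is resolved by the completeness theory underlying Theorem~\ref{thm:completeness_curve}. Indeed, the estimates of \cite{bruveris2017completeness} show, for $n\geq 2$, that along any finite-energy path $c(t)$ the quantities $\sqrt{\ell_{c(t)}}$ and $\log\min_\theta|\partial_\theta c(t)|$ (as well as $\log\max_\theta|\partial_\theta c(t)|$) vary in a Lipschitz manner with respect to $\operatorname{dist}_{G^n}$, and that the $H^n$-norm stays bounded on metric balls $B(c_0,\delta)$; this is precisely the content guaranteeing continuity of the identity map from $(\Imm^n(D,\R^d),\operatorname{dist}_{G^n})$ to the $H^n$-manifold. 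Taking the infimum over paths and then the supremum over $c\in B(c_0,\delta)$ furnishes the uniform constants, and in particular the stated lower bound $|\partial_\theta c(\theta)|\geq C^{-1}$.

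Combining the two stages yields the claim for the scale-invariant metric \eqref{eq:def_Sobolev_metric_length_inv}. The case $D=S^1$ with the constant-coefficient metric \eqref{eq:def_Sobolev_metric} is handled identically: Theorem~\ref{thm:completeness_curve} supplies the same completeness and hence the same uniform geometric bounds, while the absence of the length factors $\ell_c^{2r-i}$ only simplifies the estimates above.
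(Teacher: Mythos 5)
Your proposal is correct, but it is worth pointing out that the paper itself never proves this lemma: it is introduced with the phrase ``as a by-product of Theorem~\ref{thm:completeness_curve}'', i.e.\ the authors defer entirely to the cited completeness literature (in particular \cite{bruveris2017completeness}), where both the uniform geometric bounds on metric balls and the resulting norm equivalence are established. What you have written is essentially a reconstruction of that literature's argument, and it holds together. The pointwise stage is handled correctly, including the one genuinely delicate analytic point: in the expansion of $\partial_s^n h$ the coefficient of $\partial_\theta h$ involves $\partial_\theta^n c$, which is only in $L^2$, so it must be paired with the $L^\infty$ control of $\partial_\theta h$ coming from $H^{n-1}\hookrightarrow C^0$; and for the lower bound the lower-order terms must be absorbed via Gagliardo--Nirenberg interpolation and Young's inequality, which is what your coercivity sketch amounts to. The uniformization stage is where the real content lies, and you (like the paper) take it from \cite{bruveris2017completeness}: the Lipschitz dependence of $\sqrt{\ell_c}$, $\log\min_\theta|\partial_\theta c|$, $\log\max_\theta|\partial_\theta c|$ and the boundedness of $\|c\|_{H^n}$ on metric balls. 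Your resolution of the apparent circularity is also the correct logical ordering --- those Lipschitz estimates are derived in the literature from direct pointwise lower bounds on $G^n_{c}$ along finite-energy paths, not from the uniform equivalence being proved, so there is no vicious circle. One small point to keep in view: for the scale-invariant metric the length prefactors $\ell_c^{2r-i}$ must themselves be bounded above and below uniformly on $B(c_0,\delta)$ before they can be discarded as ``harmless''; this follows from your stage-two bound on $\ell_c$, so the argument is complete, but it deserves an explicit sentence rather than being folded into ``up to the positive length factors''.
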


Finally, we can introduce the space of \textit{unparametrized immersed curves}, which is defined as the quotient of parametrized immersed curves by the reparametrization group, i.e., $\Imm^r(D,\R^d)/\Diff^r(D)$. In other words, an unparametrized curve is an equivalence class of $c \in \Imm^r(D,\R^d)$ modulo its reparametrizations, which we denote by $[c] = \{c \circ \varphi \ | \ \varphi \in \Diff^r(D)\}$. As we are ultimately interested in comparing such unparametrized shapes, the question is whether the distance \eqref{eq:Riemannian_dist} descends to a distance on the quotient space. The following theorem from \cite{bruveris7completeness} addresses this point:
\begin{theorem}\label{thm:quotient_distance_curves}
Let $n\geq 2$ and $\operatorname{dist}_{G^n}$ the Riemannian distance \eqref{eq:Riemannian_dist} associated to the scale-invariant Sobolev metric \eqref{eq:def_Sobolev_metric_length_inv}. Then $\Imm^n(D,\R^d)/\Diff^n(D)$ equipped with the quotient distance
\begin{equation}
\label{eq:quotient_distance}
\operatorname{dist}_{G^n}([c_0] , [c_1]) = \inf_{\varphi \in \Diff^n(D)} \operatorname{dist}_{G^n}(c_0, c_1 \circ \varphi)
\end{equation} 
is a length space, and any two unparametrized curves in the same connected component can be joined by a minimizing geodesic, i.e., there exists $\varphi \in \Diff^n(D)$ achieving the minimum in \eqref{eq:quotient_distance} and an optimal path in $\Imm^n(D,\R^d)$ connecting $c_0$ and $c_1 \circ \varphi$.\\
This result also holds for the constant coefficient metric \eqref{eq:def_Sobolev_metric} when $D=S^1$.
\end{theorem}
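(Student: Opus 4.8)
The plan is to realize the quotient distance as coming from the isometric action of the reparametrization group $\Diff^n(D)$ on the complete length space $(\Imm^n(D,\R^d),\operatorname{dist}_{G^n})$ furnished by Theorem~\ref{thm:completeness_curve}, and to deduce both claims from this structure. The length-space property is the softer of the two: since $\operatorname{dist}_{G^n}$ on $\Imm^n(D,\R^d)$ is itself a length metric (being defined as an infimum of path energies) and $\Diff^n(D)$ acts by isometries thanks to the reparametrization invariance of $G^n$, the quotient metric $\operatorname{dist}_{G^n}([c_0],[c_1])=\inf_\varphi\operatorname{dist}_{G^n}(c_0,c_1\circ\varphi)$ inherits the length-space structure by the standard descent of length metrics through metric quotients by isometry groups. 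I would state this as the first step, together with the verification that the quotient distance is genuinely point-separating (which, once minimizers are shown to exist, reduces to the non-degeneracy of $\operatorname{dist}_{G^n}$ at the parametrized level).

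The substantive content is the existence of an optimal reparametrization. First I would fix $c_0,c_1$ in the same connected component, set $\rho=\operatorname{dist}_{G^n}([c_0],[c_1])$, and choose a minimizing sequence $\varphi_k\in\Diff^n(D)$ with $\operatorname{dist}_{G^n}(c_0,c_1\circ\varphi_k)\to\rho$. For large $k$ the endpoints $c_1\circ\varphi_k$ lie in a fixed bounded ball $B(c_0,\rho+1)$, so by the norm equivalence of Lemma~\ref{lemma:bounds_Sobolev_metrics} on bounded metric balls they are uniformly bounded in $H^n(D,\R^d)$ and satisfy a uniform immersion bound $|\partial_\theta(c_1\circ\varphi_k)|\geq C^{-1}>0$. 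Since $n\geq 2$ yields the compact embedding $H^n\hookrightarrow\hookrightarrow C^1$, a subsequence converges in $C^1$ and weakly in $H^n$ to some $c_\infty$, and the uniform lower bound passes to the limit, guaranteeing $c_\infty\in\Imm^n(D,\R^d)$.

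The crux, which I expect to be the main obstacle, is to recover from this a limiting diffeomorphism $\varphi_\infty\in\Diff^n(D)$ with $c_\infty=c_1\circ\varphi_\infty$; this is exactly where the incompleteness of $\Diff^n(D)$ threatens the argument, since a minimizing sequence of reparametrizations may a priori degenerate (developing flat parts or jumps in the limit) into a non-invertible monotone map. To control this I would apply Helly's selection theorem to the monotone maps $\varphi_k$ to extract a pointwise limit $\psi$, and then use the immersion bounds to pin down its regularity: writing $|\partial_\theta(c_1\circ\varphi_k)|=|(\partial_\theta c_1)\circ\varphi_k|\,\partial_\theta\varphi_k$ and using that $|\partial_\theta c_1|$ is bounded above and below, the two-sided control on $|\partial_\theta(c_1\circ\varphi_k)|$ forces $\partial_\theta\varphi_k$ to be uniformly bounded away from $0$ and from $\infty$. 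Thus the $\varphi_k$ are uniformly bi-Lipschitz, so $\psi=\varphi_\infty$ is a bi-Lipschitz homeomorphism, and a bootstrap using $c_1,c_\infty\in H^n$ together with $c_\infty=c_1\circ\varphi_\infty$ upgrades its regularity to $\Diff^n(D)$.

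Finally I would close the argument: by lower semicontinuity of $\operatorname{dist}_{G^n}(c_0,\cdot)$ under the above convergence one gets $\operatorname{dist}_{G^n}(c_0,c_1\circ\varphi_\infty)\leq\rho$, while the definition of the infimum gives the reverse inequality, so $\varphi_\infty$ realizes the minimum in \eqref{eq:quotient_distance}. Applying Theorem~\ref{thm:completeness_curve} to the pair $c_0,c_1\circ\varphi_\infty$ then produces a minimizing geodesic in $\Imm^n(D,\R^d)$ between them, which projects to the sought minimizing geodesic in the quotient. The case $D=S^1$ with the constant-coefficient metric \eqref{eq:def_Sobolev_metric} is handled identically, invoking the corresponding statements of Theorem~\ref{thm:completeness_curve} and Lemma~\ref{lemma:bounds_Sobolev_metrics}.
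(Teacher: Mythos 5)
Your overall strategy---the direct method combined with extracting a limiting diffeomorphism from the two-sided bounds on $\partial_\theta\varphi_k$---is essentially the approach of the reference \cite{bruveris7completeness}, which is where the paper itself gets this theorem (the paper states it with a citation and gives no proof). Most of your steps are sound: the descent of the length-space property through the isometric action, the compactness extraction of $c_\infty$ via $H^n\hookrightarrow\hookrightarrow C^1$, the key observation that $|\partial_\theta(c_1\circ\varphi_k)| = |(\partial_\theta c_1)\circ\varphi_k|\,\partial_\theta\varphi_k$ together with Lemma~\ref{lemma:bounds_Sobolev_metrics} forces the $\varphi_k$ to be uniformly bi-Lipschitz, and the bootstrap of $\varphi_\infty$ to $\Diff^n(D)$ (e.g.\ via arclength functions, $\varphi_\infty = s_{c_1}^{-1}\circ s_{c_\infty}$).

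The genuine gap is your closing step, where you assert lower semicontinuity of $c\mapsto\operatorname{dist}_{G^n}(c_0,c)$ with respect to the convergence you actually have, namely strong $C^1$ plus weak $H^n$ convergence of $c_1\circ\varphi_k$ to $c_\infty$. This cannot be taken for granted: by Lemma~\ref{lemma:bounds_Sobolev_metrics} the geodesic distance is locally comparable to the $H^n$-norm distance, and weak $H^n$ convergence gives no control on that quantity, so neither continuity nor the triangle-inequality shortcut $\operatorname{dist}_{G^n}(c_0,c_\infty)\le\operatorname{dist}_{G^n}(c_0,c_1\circ\varphi_k)+\operatorname{dist}_{G^n}(c_1\circ\varphi_k,c_\infty)$ is available. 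The assertion is true, but its proof is the analytic core of the whole theorem, and it forces you to work with paths rather than endpoints. Concretely: choose paths $\gamma_k$ from $c_0$ to $c_1\circ\varphi_k$ with $\int_0^1 G^n_{\gamma_k(t)}(\partial_t\gamma_k(t),\partial_t\gamma_k(t))\,dt \le \operatorname{dist}_{G^n}(c_0,c_1\circ\varphi_k)+1/k$; the uniform energy bound and Lemma~\ref{lemma:bounds_Sobolev_metrics} bound $\gamma_k$ in $H^1([0,1],H^n)$; extract a weak limit $\gamma_\infty$, whose endpoint $\gamma_\infty(1)$ coincides with your $c_\infty$ since evaluation at $t=1$ is a bounded linear, hence weakly continuous, map; then the weak lower semicontinuity of the path energy (the very property quoted from Theorem 5.2 of \cite{bruveris7completeness} in the proof of Theorem~\ref{thm:existence_graph_match}) yields $\operatorname{dist}_{G^n}(c_0,c_\infty)\le\liminf_k \operatorname{dist}_{G^n}(c_0,c_1\circ\varphi_k)=\rho$. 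Your diffeomorphism-extraction argument then applies verbatim to $c_\infty$, and the rest of your proof closes as written.
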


In practice, computing the distance between two unparametrized curves in this framework thus requires solving a \textit{matching problem} that consists in finding the optimal reparametrization and immersion path. We will discuss numerical aspects later in the paper once we have introduced our generalization of this setting to shape graphs. 

\subsection{Metrics on shape graphs}
\label{ssec:shape_graphs}
We now introduce the space of parametrized shape graphs, see Fig.~\ref{fig:ex_shape_graph} for examples of objects that lie in this space. Shape graphs generalize the concept of open and closed curves, providing us with a framework to perform elastic shape analysis for shapes with non-standard topologies, such as trees, venation networks and shapes with multiple connected components. In this section, our aim is to equip the space of shape graphs with a reparametrization invariant Riemannian metric and show that the completeness properties, as obtained for open curves, continue to hold on this space. In recent work by Srivastava et. al. a particular Sobolev metric of order $n=1$, namely the square-root velocity metric, has been introduced on this space~\cite{srivastava2020advances,duncan2018statistical,guo2020statistical,guo2020representations,wang2020statistical}. 

A \textit{parametrized shape graph} $c = \prod_{k=1}^K c^k$ is a Cartesian product of $K$ component curves $c^1,...,c^K$, where $c^k \in \Immr$ for each $k = 1,...,K$. To describe the connectivity between component curves $c^k$ and $c^l$, where $k,l = 1,...,K$, we fix an adjacency matrix $\Adj \in \{0,1\}^{2K \times 2K}$ which encodes if a boundary point of the $k^{th}$ component curve is connected to a boundary point of the $l^{th}$ component curve. More precisely, the adjacency matrix $\Adj$ is defined as follows:
\begin{align}
    \Adj_{2k-1,2l-1} &= 
    \begin{cases}
    1 \quad \mbox{if $c^k(0) = c^l(0)$,} \\
    0 \quad \mbox{else.}
    \end{cases}  \label{conn1}  \\ 
    \Adj_{2k,2l} &= 
    \begin{cases}
    1 \quad \mbox{if $c^k(1) = c^l(1)$,} \\
    0 \quad \mbox{else.}
    \end{cases}  \label{conn2}  \\ 
    \Adj_{2k-1,2l} &= 
    \begin{cases}
    1 \quad \mbox{if $c^k(0) = c^l(1)$,} \\
    0 \quad \mbox{else.}
    \end{cases}  \label{conn3} 
\end{align}
Note that when $k=l$, condition~\eqref{conn3} encodes the topology of component curve $c^k$, i.e., it encodes if the curve $c^k$ is closed or open:
\begin{align*}
    \Adj_{2k-1,2k} =
    \begin{cases}
    1 \quad \mbox{if $c^k$ is a closed curve,} \\
    0 \quad \mbox{if $c^k$ is an open curve.}
    \end{cases}
\end{align*}
We point out that the topology of the shape graph is entirely encoded by the adjacency matrix, while its geometry is encoded by the component curves. See Fig.~\ref{fig:ex_shape_graph} for an illustrative example of this construction.

\begin{figure}[h]
\begin{center}
    \begin{subfigure}{0.31\textwidth}
        \includegraphics[width=1\linewidth]{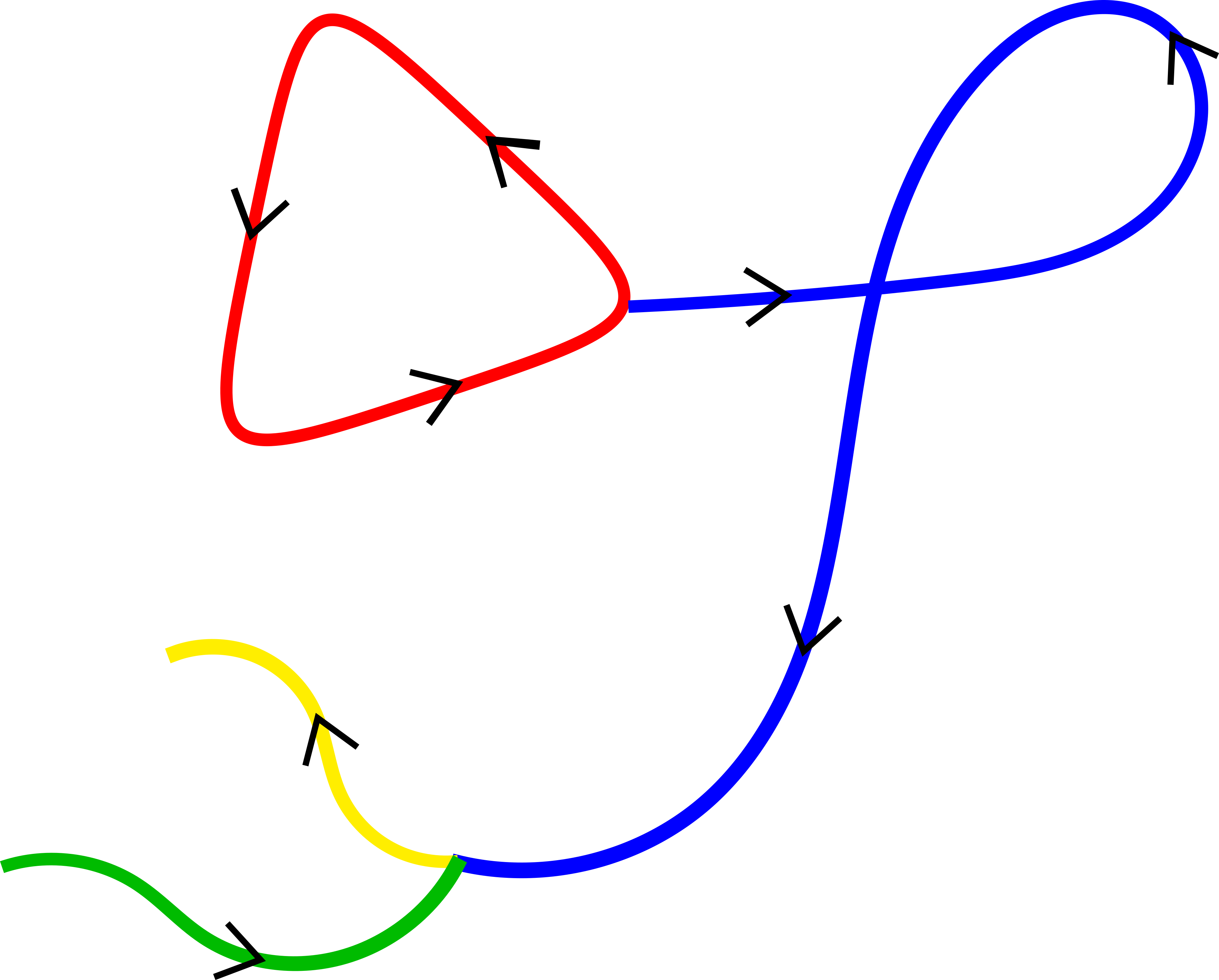}
    \end{subfigure}
    \hspace{1cm}
    \begin{subfigure}{0.44\textwidth}
         \begin{align*}
            \Adj =\begin{pmatrix}
                  1 & 1 & 1 & 0 & 0 & 0 & 0 & 0 \\
                  1 & 1 & 1 & 0 & 0 & 0 & 0 & 0 \\
                  1 & 1 & 1 & 0 & 0 & 0 & 0 & 0 \\
                  0 & 0 & 0 & 1 & 1 & 0 & 0 & 1 \\
                  0 & 0 & 0 & 1 & 1 & 0 & 0 & 1 \\
                  0 & 0 & 0 & 0 & 0 & 1 & 0 & 0 \\
                  0 & 0 & 0 & 0 & 0 & 0 & 1 & 0 \\
                  0 & 0 & 0 & 1 & 1 & 0 & 0 & 1 
            \end{pmatrix}
        \end{align*}
    \end{subfigure}
\end{center}    
\caption{Parametrized shape graph (left) with associated adjacency matrix (right). The shape graph $c = \prod_{k=1}^K c^k$ has $K=4$ component curves, where $c^1$ is a closed curve (red), $c^2$ is an immersion with self intersection (blue), and $c^3$ and $c^4$ are open curves (yellow and green respectively).}
\label{fig:ex_shape_graph}
\end{figure}
This leads us to define the space of parametrized shape graphs of regularity $r$ and adjacency matrix $\Adj$ as
\begin{equation*}
    \GraphrA = \left\{ c\in \prod_{k=1}^K \Immr: c\text{ satisfies }  \eqref{conn1}-\eqref{conn3} \right\},
\end{equation*}
where, for any $r>3/2$, the space of Sobolev immersions $\Immr$ is defined as in Section~\ref{sec:curves}. We will also denote the set of shape graphs of regularity $r$ with arbitrary adjacency structure by $\Graphr = \bigcup_{\Adj} \GraphrA$. The space $\GraphrA$ is a Hilbert manifold as it is a product of Hilbert manifolds with linear constraints. Its tangent space at a shape graph $c$ is given by:
\begin{equation*}
    T_c\GraphrA=\left\{h\in \prod_{k=1}^K H^r([0,1],\mathbb R^d): h\text{ satisfies }  \eqref{conn1}-\eqref{conn3} \right\}.
\end{equation*}
The product reparametrization group $\prod_{k=1}^K \Diff^{r}([0,1])$ acts component-wise on the space of parametrized shape graphs as follows:
\begin{equation}
    \GraphrA \times \prod_{k=1}^K \Diff^{r}([0,1]) \to \GraphrA, \qquad (c,\varphi) \mapsto c^k \circ \varphi^k.
\end{equation}
Due to the product structure of the space of shape graphs, the class of elastic Sobolev metrics on the space of immersed curves, as defined in~\eqref{eq:def_Sobolev_metric} and~\eqref{eq:def_Sobolev_metric_length_inv}, generalizes to a reparametrization invariant metric on the space of shape graphs with fixed adjacency structure. This allows us to directly obtain the analogues of the completeness results of Theorem~\ref{thm:completeness_curve} on this product space. We formulate the result again for $n=r$ only:
\begin{theorem}\label{thm:grap:completeness}
For $n\geq 2$ let
\begin{equation}
\label{eq:def_bar_G}
    \bar G^n_c(h,h)=\sum_{k=1}^{K} G^{n}_{c^k}(h^k,h^k),
\end{equation}
where $c\in \GraphnA$, $h\in T_c\GraphnA$ and where 
$G^{n}$ is the scale invariant Sobolev metric as defined in~\eqref{eq:def_Sobolev_metric_length_inv}. Then $\bar G^n$ defines a smooth, strong and reparametrization invariant Riemannian metric on $\GraphnA$. 
Let $\overline{\operatorname{dist}}_{\bar G^n}$ denote the induced geodesic distance of $\bar G^n$ on $\GraphnA$. The following properties hold:
\begin{remunerate}
    \item The space $\left( \GraphnA, \overline{\operatorname{dist}}_{\bar G^n} \right)$ is a complete metric space.
    \item The space $\left( \GraphnA, \bar G^n \right)$ is geodesically complete.
    \item For any $c_0, c_1 \in \GraphnA$ there exists a minimizing geodesic in $\GraphnA$ (w.r.t. to the metric $\bar G^n$), that connects $c_0$ to $c_1$. 
\end{remunerate}
\end{theorem}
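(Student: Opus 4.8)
The plan is to exploit the product structure of $\GraphnA$ together with the fact that the connectivity conditions \eqref{conn1}--\eqref{conn3} are \emph{linear} (affine) constraints on the boundary values of the component curves, so that $\GraphnA$ sits as a closed, splitting Hilbert submanifold of the product $P := \prod_{k=1}^K \Imm^n([0,1],\R^d)$, with $\bar G^n$ the restriction of the product metric $\sum_k G^n_{c^k}$. The metric-theoretic claims (smooth, strong, reparametrization invariant) are then immediate: each $G^n$ is smooth, strong and invariant by the results underlying Theorem \ref{thm:completeness_curve}, a finite sum preserves these, and restricting a strong metric to a closed complemented subbundle keeps it strong. Indeed, by Lemma \ref{lemma:bounds_Sobolev_metrics} applied factorwise, $\bar G^n$ is locally equivalent to the $H^n$ inner product, and this equivalence persists on the closed subspace $T_c\GraphnA$. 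Reparametrization invariance under the product group follows componentwise.

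For metric completeness (claim 1), I would first note that $(P, d_P)$ with $d_P^2 = \sum_k \operatorname{dist}_{G^n}^2$ is complete as a finite product of complete metric spaces (Theorem \ref{thm:completeness_curve}, item 1). Since any path inside $\GraphnA$ is in particular a path in $P$, one has $d_P \le \overline{\operatorname{dist}}_{\bar G^n}$, so an $\overline{\operatorname{dist}}_{\bar G^n}$-Cauchy sequence $(c_m)$ is $d_P$-Cauchy and converges in $P$ to some $c_\infty$; because point-evaluation at the boundary is continuous on $H^n$ and $d_P$-convergence implies $H^n$-convergence (Lemma \ref{lemma:bounds_Sobolev_metrics}), the equality constraints pass to the limit and $c_\infty \in \GraphnA$. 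The one real step is to upgrade $d_P$-convergence to $\overline{\operatorname{dist}}_{\bar G^n}$-convergence: here I use linearity of the constraints decisively, since for $m$ large the straight line $t \mapsto (1-t)c_m + t c_\infty$ (taken componentwise) again satisfies \eqref{conn1}--\eqref{conn3} because both endpoints do, stays in the immersions by the lower bound $|\partial_{\theta} c|\ge C^{-1}$ of Lemma \ref{lemma:bounds_Sobolev_metrics}, and has $\bar G^n$-energy $\le C\|c_m - c_\infty\|_{H^n}^2 \to 0$, giving $\overline{\operatorname{dist}}_{\bar G^n}(c_m,c_\infty)\to 0$.

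For geodesic completeness (claim 2), since $\bar G^n$ is smooth and strong its geodesic spray is a smooth vector field on $T\GraphnA$, yielding local existence and uniqueness of geodesics and a lower bound on the existence time that is uniform over initial data lying in a fixed chart with bounded velocity. Suppose a geodesic $\gamma$ were defined only on a maximal interval $[0,T)$ with $T < \infty$. Geodesics have constant Riemannian speed, so $\gamma$ is Lipschitz for $\overline{\operatorname{dist}}_{\bar G^n}$ and hence Cauchy as $t\uparrow T$; by claim 1 it converges to some $c_* \in \GraphnA$, and by Lemma \ref{lemma:bounds_Sobolev_metrics} this convergence is also in the $H^n$ (manifold) topology while $\|\partial_t\gamma(t)\|_{H^n}$ stays bounded. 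Restarting the geodesic at $\gamma(t)$ for $t$ close to $T$ then produces a solution on an interval of fixed length reaching past $T$, which extends $\gamma$ by uniqueness and contradicts maximality; hence $T = \infty$. I stress that this argument genuinely relies on the strong-metric norm equivalence and does \emph{not} reduce to the factors, since $\GraphnA$ is not totally geodesic in $P$.

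The main obstacle is claim 3, the existence of minimizing geodesics, which \emph{cannot} be obtained from claims 1 and 2 by a Hopf--Rinow argument because that equivalence fails in infinite dimensions (Atkin's counterexample, cited above). I would instead run the direct method on the path space, adapting the argument behind Theorem \ref{thm:completeness_curve}, item 3: take an energy-minimizing sequence of paths from $c_0$ to $c_1$, reparametrize to constant speed, and use that bounded energy confines the paths to a bounded metric ball on which Lemma \ref{lemma:bounds_Sobolev_metrics} gives uniform $H^n$ bounds; a Rellich-type compact embedding then yields a convergent subsequence whose limit is a path of immersions realizing the infimum, with lower semicontinuity of the energy handled as in the curve case. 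The crucial point specific to shape graphs is that the limit path still lies in $\GraphnA$, which is precisely where linearity of the connectivity constraints is used once more, since affine constraints on boundary values are preserved under the weak limits involved. Assembling these three claims completes the proof.
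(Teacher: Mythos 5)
Your proposal is correct and takes essentially the same approach as the paper: reducing everything to the component-curve results (Theorem~\ref{thm:completeness_curve} and Lemma~\ref{lemma:bounds_Sobolev_metrics}) by exploiting the product structure together with the fact that the connectivity conditions \eqref{conn1}--\eqref{conn3} are closed linear constraints, so that $\GraphnA$ is a closed, split submanifold of the product of immersion spaces on which all the relevant estimates, compactness and lower semicontinuity properties restrict. The paper's own proof is only a one-sentence deferral to this reduction, and your additional observations --- notably that $\GraphnA$ is \emph{not} totally geodesic in the product, so that geodesic completeness and the existence of minimizing geodesics must be obtained by re-running the strong-metric extension argument and the direct method inside the constrained submanifold (where linearity of the constraints ensures they pass to the straight-line paths and to the weak limits), rather than by restricting component geodesics --- correctly make precise what the paper leaves implicit.
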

\begin{proof}
Taking into account the linear constraints, this result follows directly from the corresponding completeness result for the component curves, c.f.~Theorem~\ref{thm:completeness_curve}.
\end{proof}

From expression \eqref{eq:def_bar_G} of the Riemannian metric  on $\GraphnA$, we observe that the metric (and consequently the induced geodesic distance) is not only reparametrization invariant, but also invariant to permutations of the shape graph components, i.e., it remains unchanged when applying a reordering of the components of $c$. This follows from the fact that the Riemannian metric is defined as a sum of the individual Riemannian metrics of the components.

This allows us to consider the induced geometry on the space of unparametrized shape graphs, i.e., on the quotient space 
\begin{equation}
\GraphrA \Big/  \prod_{k=1}^K \Diff^{r}([0,1]) \Big/ \operatorname{Sym}(K),
\end{equation}
where $\operatorname{Sym}(K)$ denotes the permutation group.
This space is not a smooth manifold, but it is a Hausdorff topological space; this can be seen similarly as in~\cite{bruveris7completeness}. In the following we will denote elements of the quotient space by $[c]$, i.e., equivalence classes of parametrized shape graphs.

Using the reparametrization and permutation invariance of the geodesic distance $\overline{\operatorname{dist}}_{\bar G^n}$ induced by the metric $\bar G^n$, it follows that $\overline{\operatorname{dist}}_{\bar G^n}$ descends to a metric on the quotient space, which is given by
\begin{align*}
 \overline{\operatorname{dist}}_{\bar G^n}([c_0],[c_1])= 
 \inf_{\substack{
\varphi \in \prod_{k=1}^K \Diff^{r}([0,1])\\ \sigma\in \operatorname{Sym}(K)}} \overline{\operatorname{dist}}_{\bar G^n}(c_0, c_1 \circ \varphi\circ\sigma)
\end{align*}

The following theorem studies the completeness properties of the resulting metric space:
\begin{theorem}
Let $n\geq 2$. Then  
\begin{equation}
\left(\GraphrA \Big/  \prod_{k=1}^K \Diff^{r}([0,1]) \Big/ \operatorname{Sym}(K),  \overline{\operatorname{dist}}_{\bar G^n}\right)  
\end{equation} 
is a length space and any two shape graphs in the same connected component can be joined by a minimizing geodesic.
\end{theorem}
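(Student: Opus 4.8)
The plan is to obtain the result by iterating two quotient constructions, mirroring the strategy used for a single curve in Theorem~\ref{thm:quotient_distance_curves}, and by exploiting that the two groups at play are of very different nature. The group generated by $\prod_{k=1}^K \Diff^{r}([0,1])$ and $\operatorname{Sym}(K)$ acts on $\GraphnA$ by isometries of $\overline{\operatorname{dist}}_{\bar G^n}$: reparametrization invariance is part of Theorem~\ref{thm:grap:completeness}, while invariance under $\operatorname{Sym}(K)$ follows because $\bar G^n$ is defined in \eqref{eq:def_bar_G} as a plain sum over the component curves and is therefore blind to any relabelling. Since $\prod_{k=1}^K \Diff^{r}([0,1])$ is a normal subgroup of this combined group (conjugating a reparametrization tuple by a permutation merely permutes its entries), I would factor out the reparametrizations first and deal with the finite permutation action afterwards.

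\textbf{Step 1: the reparametrization quotient.} First I would set $Y := \GraphnA / \prod_{k=1}^K \Diff^{r}([0,1])$, equipped with the quotient pseudo-distance. By Theorem~\ref{thm:grap:completeness}, $(\GraphnA, \bar G^n)$ is a complete, geodesically complete strong Riemannian manifold on which $\prod_{k=1}^K \Diff^{r}([0,1])$ acts by isometries, and between any two parametrized shape graphs there exists a minimizing geodesic. This is precisely the situation treated in \cite{bruveris7completeness} for a single curve, and the argument there is not specific to curves: it applies to any complete, geodesically complete strong Riemannian manifold carrying an isometric action whose induced quotient pseudo-distance separates orbits (point-separation being inherited here exactly as in the curve case). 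I would therefore invoke it for the product space to conclude that $(Y, \overline{\operatorname{dist}}_{\bar G^n})$ is a length space and that the infimum $\inf_{\varphi} \overline{\operatorname{dist}}_{\bar G^n}(c_0, c_1 \circ \varphi)$ over $\varphi \in \prod_{k=1}^K \Diff^{r}([0,1])$ is attained by some optimal $\varphi$. Combining this optimal reparametrization with the minimizing geodesic between $c_0$ and $c_1 \circ \varphi$ furnished by Theorem~\ref{thm:grap:completeness} then yields a minimizing geodesic in $Y$.

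\textbf{Step 2: the permutation quotient.} The residual group $\operatorname{Sym}(K)$ is finite and acts on the length space $Y$ by isometries, so this step is routine. For $[c_0],[c_1] \in Y$ the double-quotient distance is $\min_{\sigma \in \operatorname{Sym}(K)} \overline{\operatorname{dist}}_{\bar G^n}([c_0], \sigma \cdot [c_1])$, an infimum over a finite set and hence a minimum attained at some $\sigma_*$. Taking a minimizing geodesic from $[c_0]$ to $\sigma_* \cdot [c_1]$ in $Y$ (which exists by Step 1) and projecting it to the quotient produces a connecting path whose length is at most $\overline{\operatorname{dist}}_{\bar G^n}([c_0], \sigma_* \cdot [c_1])$, because the projection $Y \to Y/\operatorname{Sym}(K)$ is $1$-Lipschitz; since no connecting path can be shorter than the quotient distance, this path realizes the distance and is the desired minimizing geodesic. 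The same projection argument shows the length-space property descends from $Y$ to the double quotient, completing the proof.

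\textbf{Main obstacle.} The genuinely nontrivial ingredient is the attainment of the optimal reparametrization in Step 1, since $\prod_{k=1}^K \Diff^{r}([0,1])$ is infinite-dimensional and the constraints \eqref{conn1}--\eqref{conn3} couple the endpoints of distinct components, so that $\overline{\operatorname{dist}}_{\bar G^n}$ does \emph{not} split as a sum of componentwise distances and the single-curve statement cannot simply be applied coordinate by coordinate. This is exactly where the metric and geodesic completeness of Theorem~\ref{thm:grap:completeness} are essential, and where I rely on the compactness argument of \cite{bruveris7completeness}. By contrast, Step 2 is elementary precisely because $\operatorname{Sym}(K)$ is finite. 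One minor point to treat carefully is that permuting components also permutes the adjacency matrix, so $\operatorname{Sym}(K)$ should be understood as acting within a fixed connected component, i.e.\ through the automorphisms of $\Adj$; this does not affect the argument since $\bar G^n$ ignores the labelling.
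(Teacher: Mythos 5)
Your proposal is correct and follows essentially the same route as the paper: the paper's proof likewise consists of invoking Theorem~\ref{thm:grap:completeness} together with the arguments of~\cite{bruveris7completeness} for the reparametrization quotient, and then observing that the permutation group causes no additional difficulty because it is finite. Your write-up simply makes explicit the two-step quotient structure and the $1$-Lipschitz projection argument that the paper leaves implicit.
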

\begin{proof}
This follows from Theorem~\ref{thm:grap:completeness}, using the same arguments as~\cite{bruveris7completeness}. Note that the permutation group does not lead to any additional difficulties, as this is only a finite group.
\end{proof}

Thus to obtain a distance on the space of unparametrized shape graphs modulo permutations of their components, it is necessary to optimize over both the product reparametrization group and over all permutations of the component curves, which a priori presents a significant computational challenge. We will discuss how we address this issue indirectly in the next section by introducing a relaxation term for the terminal matching constraint that is fully blind to both these actions.

\begin{remark}\label{remark:shape_graphs_modulo_rotations}
We can also consider the space of unparametrized shape graphs modulo rotations, where the rotation operation acts on a given shape graph by rotating each of its component curves by a given angle of rotation. Since our Riemannian metric is also invariant with respect to this finite dimensional group action, it also descends to a Riemannian metric on this quotient space. Computing the induced geodesic distance  would thus involve a minimization over the rotation group $\operatorname{SO}(d)$ in addition to minimizing over the product reparametrization group and permutation group.
\end{remark}

\section{Relaxed matching problem}
\label{sec:relaxed_matching_problem}
For closed or open curves and a fortiori in the more general setting of shape graphs as introduced in Section \ref{ssec:shape_graphs}, computing the Riemannian distance between pairs of unparametrized shapes requires solving a matching problem involving an optimization over paths of immersions and reparametrizations, plus permutations of the components for the case of shape graphs. In practice, paths of immersions can be discretized by considering piecewise linear curves or more generally splines as in \cite{bauer2017numerical}, and thus the minimization over those paths can be framed quite naturally as a standard finite dimensional optimization problem, as we will outline in Section \ref{sec:optim_algo}. However, dealing with reparametrizations is typically more difficult as the minimization is on an infinite dimensional group, and discretizing such a group and its action on curves is not straightforward, c.f. the discussions in \cite{trouve2000diffeomorphic,mio2007shape,bauer2017numerical}. Recently, an alternative approach was proposed for closed and open curves \cite{bauer2019relaxed} in which this minimization over reparametrizations of $c_1$ in \eqref{eq:quotient_distance} is dealt with indirectly by instead introducing a relaxation of the end time constraint using a parametrization blind data attachment term. At a high level, this consists in considering the relaxed matching problem:
\begin{equation*}
 \inf\left\{\int_0^1 G_{c(t)}^{n}(\partial_t{c}(t),\partial_t{c}(t)) dt  + \lambda \Discr(c(1),c_1) \right\} ,
\end{equation*}
where the minimization is here only over paths $c(\cdot) \in H^1([0,1],\Imm^n(D,\R^d))$ that satisfy the initial constraint $c(0) = c_0$, and $\Discr(c(1),c_1)$ is a measure of discrepancy between the deformed source $c(1)$ and the true target curve $c_1$, with $\lambda>0$ being a balancing parameter. Formally, if $\Discr$ is independent of the parametrization of either of the two curves, then imposing $\Discr(c(1),c_1) \approx 0$ yields $c(1)\approx c_1 \circ \varphi$, which approximates the end time constraint in \eqref{eq:quotient_distance} without the need to actually model the reparametrization itself. Furthermore, it allows for inexact matching when computing the distance, which will prove even more critical when extending this framework to shape graphs that can exhibit different topologies and involve only partial correspondences. In addition, if $\Discr$ is also independent of the ordering of the curves in either of the shape graphs, the relaxed matching problem will also allow us to circumvent the minimization over the finite (but potentially large) permutation group.

\subsection{Varifold representation and distance} 
\label{ssec:var_metrics}
We now describe how to construct the key ingredient in the relaxed model described above, namely, an effective and simple to compute data attachment term $\Discr$ to compare unparametrized shapes. The concepts introduced in the field of geometric measure theory, such as currents, varifolds or normal cycles provide very useful frameworks to that end \cite{glaunes2008large,charon2013varifold,kaltenmark2017general,charon2020fidelity}. In particular, metrics based on the oriented varifold setting of \cite{kaltenmark2017general} were used as the discrepancy term $\Discr$ in the authors' previous work \cite{bauer2019relaxed}. We will see that these metrics and their nice properties can be even further exploited for shape graphs. First, we shall briefly recap the construction of varifold metrics for curves.

Using the notations and setting of Section \ref{sec:curves}, the central idea is to embed immersed parametrized curves of $\Imm^n(D,\R^d)$ for $n\geq 2$ into the space of positive Radon measures on $\R^d \times S^{d-1}$, where $S^{d-1}$ is the unit sphere of $\R^d$. This space is known as the space of oriented 1-varifolds, which we will denote by $\V$ in what follows. This embedding is specifically defined as follows: given $c \in \Imm^n(D,\R^d)$, we can consider the varifold $\mu_c \in \V$ which is given for any Borel set $A \subset \R^d \times S^{d-1}$ by 
\begin{equation*}
    \mu_c(A) = \int_{D} \mathds{1}_{\left(c(\theta),\frac{\partial_{\theta}c(\theta)}{|\partial_{\theta}c(\theta)|}\right)\in A} \, |\partial_{\theta} c(\theta)| d\theta.
\end{equation*}
Alternatively, it will be more convenient to view this measure through its action on a general test function $\omega \in C_0(\R^d \times S^{d-1},\R)$ (the space of continuous functions going to $0$ at infinity) :
\begin{equation}
\label{eq:def_mu_c}
    (\mu_c | \omega) = \int_{D} \omega\left(c(\theta),\frac{\partial_{\theta} c(\theta)}{|\partial_{\theta} c(\theta)|}\right) |\partial_{\theta} c(\theta)| d\theta.
\end{equation}
This mapping $\Imm^n(D,\R^d) \ni c \mapsto \mu_c \in \V$ has several important properties. First, it is invariant to reparametrizations; indeed a simple change of variables in the integral \eqref{eq:def_mu_c} leads to $\mu_{c \circ \varphi} = \mu_{c}$ for all $\varphi \in \Diff^n(D)$. As a consequence, this varifold mapping descends to a well-defined mapping on the quotient space of unparametrized curves. Furthermore, the measure $\mu_c$ uniquely determines the equivalence class $c \circ \Diff^n(D)$ under some mild technical assumptions (see Theorem 3.6 in \cite{bauer2019relaxed} for specific details). 

Consequently, it becomes possible to compare unparametrized curves by introducing a metric on the space of varifolds $\V$. A quite general class of such metrics is in particular obtained by considering positive definite kernels on $\R^d \times S^{d-1}$ and their associated Reproducing Kernel Hilbert Spaces (RKHS) norms, which was the construction proposed in \cite{charon2013varifold,kaltenmark2017general}. Following the same approach and without going through all the details, we consider a kernel function $\K(x,u,y,v)$ for $x,y \in \R^d$ and $u,v \in S^{d-1}$, whose associated RKHS we write $\H$. We make the assumption that the kernel $\K$ is such that $\H$ is continuously embedded into the space $C^1_0(\R^d \times S^{d-1},\R)$ of continuously differentiable functions on $\R^d \times S^{d-1}$ which vanish at infinity (as well as their first derivatives), equipped with the norm $\|\omega\|_{1,\infty} = \|\omega\|_{\infty} + \|d\omega\|_{\infty}$. Examples of such kernels are given in \cite{kaltenmark2017general,bauer2019relaxed} and we shall discuss specific choices in Section \ref{ssec:varifold_norm_computation}. This kernel then induces a Hilbert norm on $\H$ and the embedding assumption implies that there exists $C_{\H}>0$ such that for all $\omega \in \H$, we have $\|\omega\|_{1,\infty} \leq C_{\H} \|\omega\|_{\H}$. In turn, the dual norm leads to the following Hilbert metric on the space of measures of $\R^d \times S^{d-1}$ i.e. on $\V$:
\begin{equation}
\label{eq:norm_var}
    \|\mu\|_{\V} = \sup_{\omega \in \H,\|\omega\|_{\H}\leq 1} (\mu|\omega).
\end{equation}
Note that, although the notation does not emphasize it, $\|\cdot\|_{\V}$ depends on the kernel $\K$. In fact, given two curves $c_0,c_1$ in $\Imm^n(D,\R^d)$, it can be shown from the reproducing kernel property that the inner product between their associated varifolds can be expressed explicitly as: 
\begin{equation}
\label{eq:inner_prod_var_curve}
    \langle \mu_{c_0}, \mu_{c_1} \rangle_{\V} = \iint_{D\times D} \mathcal{K}\left(c_0(\theta),\frac{c_0(\theta)}{|c_0(\theta)|},c_1(\tilde \theta),\frac{c_1(\tilde \theta)}{|c_1(\tilde \theta)|}\right) |\partial_{\theta} c_0(\theta)| |\partial_{\theta} c_1(\tilde \theta)| d\theta d\tilde \theta.
\end{equation}
Then, by setting $\Delta(c_0,c_1) = \|\mu_{c_1}-\mu_{c_0}\|_{\V}^2 = \|\mu_{c_0}\|_{\V}^2 +\|\mu_{c_1}\|_{\V}^2 - 2 \langle \mu_{c_0}, \mu_{c_1} \rangle_{\V}$, we obtain a discrepancy term that is independent of the parametrizations of $c_0$ and $c_1$, and which can be expressed explicitly from \eqref{eq:inner_prod_var_curve}. Note that in general $\|\cdot\|_{\V}$ may only be a pseudo-metric on $\V$ but with adequate extra properties on the kernel $\K$ (namely $C_0$-universality), one can recover a true metric. We refer the reader to the results in \cite[Section 3.2]{hsieh2021metrics} and \cite[Section 3.2]{bauer2019relaxed} for more details. 

\subsection{Relaxed shape graph matching}
\label{ssec:relaxed_shape_graph_matching}
A further advantage of using varifolds to construct the parametrization invariant data attachment term is that they provide an adequate measure of discrepancy between shape graphs having arbitrary adjacency matrices. This follows mainly from the additive property of the varifold representation. For $n\geq 2$ and an adjacency matrix $\Adj$, given $c \in \GraphnA$ with $c = \prod_{k=1}^K c^k$, the varifold associated to $c$ can be simply defined as $\mu_c = \sum_{k=1}^{K} \mu_{c^k}  \in \V$, with each $\mu_{c^k}$ being the varifold representation of the curve $c^k$ defined as in Section \ref{ssec:var_metrics}. Then, the dual RKHS metric $\|\cdot\|_{\V}$ defined above extends directly to shape graphs. Due to the invariance properties of $\|\cdot\|_{\V}$, the resulting distance induces a (pseudo-) distance on the quotient space of unparametrized shape graphs with $K$ components. This follows from the observation that the varifold construction is invariant to the ordering of the curves of a shape graph, and to reparametrizations of these component curves.

Remarkably, $\|\cdot\|_{\V}$, and hence the varifold data attachment term, remain well-defined on the entire set $\Graphn$ of all shape graphs having an arbitrary number of components and arbitrary adjacency matrices. If $c_0 = \prod_{k=1}^K c_0^k$ and $c_1 = \prod_{l=1}^L c_1^l$ are two shape graphs with $K$ and $L$ components respectively, we obtain that $\Delta(c_0,c_1) = \|\mu_{c_0}\|_{\V}^2 +\|\mu_{c_1}\|_{\V}^2 - 2 \langle \mu_{c_0}, \mu_{c_1} \rangle_{\V}$ with
\begin{equation}
    \label{eq:inner_prod_var_graph}
    \langle \mu_{c_0}, \mu_{c_1} \rangle_{\V} = \sum_{k=1}^{K} \sum_{l=1}^{L} \langle \mu_{c^k_0}, \mu_{c^l_1} \rangle_{\V} ,
\end{equation}
where each $\langle \mu_{c^k_0}, \mu_{c^l_1} \rangle_{\V}$ can be expressed through \eqref{eq:inner_prod_var_curve}. 

This finally allows us to formulate the following relaxed shape graph matching problem:
\begin{framed}
Given $c_0,c_1 \in \Graphn$ and $\Adj$ the adjacency matrix of $c_0$, we consider the variational problem:
\begin{equation}
    \label{eq:relaxed_shape_graph_match}
    \inf\left\{\int_0^1 \bar{G}_{c(t)}^{n}(\partial_t{c}(t),\partial_t{c}(t)) dt  + \lambda \|\mu_{c(1)} - \mu_{c_1}\|_{\V}^2 \right\} 
\end{equation}
where the infimum is taken over all paths $c(\cdot) \in H^1([0,1],\GraphnA)$ satisfying the initial constraint $c(0)=c_0$. Note that in the relaxed shape graph matching framework, we refer to $c_0$ as the source shape graph, $c_1$ as the target shape graph, and $c(1)$ as the deformed source.
\end{framed}

Note that for all $t\in [0,1]$, $c(t)$ is by construction a shape graph with adjacency matrix $\Adj$ which may differ from the one of the target shape graph $c_1$. Thus, we can interpret \eqref{eq:relaxed_shape_graph_match} as the problem of finding the optimal path with respect to the metric $\bar{G}^n$ between the source shape $c_0$ and a shape graph $c(1)$ having the same graph structure as $c_0$, which is ``close'' to the target $c_1$ in terms of the varifold distance. As a particular case, when $K=L=1$, \eqref{eq:relaxed_shape_graph_match} reduces to the relaxed curve matching problem introduced in \cite{bauer2019relaxed}.

\subsection{Existence of minimizers} 
The existence of a minimizing path $c$ in \eqref{eq:relaxed_shape_graph_match} is not a priori guaranteed since, unlike with the existence of geodesics given by Theorem \ref{thm:grap:completeness}, the deformed source $c(1)$ is not fixed anymore in this relaxed variational problem. In fact, this very question was also left aside by the authors in \cite{bauer2019relaxed} for the special case of open and closed curves. In this section, we will show that the existence of minimizers holds for the scale invariant metric \eqref{eq:def_Sobolev_metric_length_inv} of regularity $n\geq 2$:
\begin{theorem}
\label{thm:existence_graph_match}
 Let $c_0,c_1 \in \Graphn$ and $\Adj$ the adjacency matrix of $c_0$. Assume that $n\geq 2$ and $\bar{G}^n$ is a scale invariant metric on $\GraphnA$. We further assume that $\H$ is continuously embedded into $C_0^1(\R^{d}\times \Sp^{d-1})$. Then the infimum in \eqref{eq:relaxed_shape_graph_match} is achieved by a path $c(\cdot) \in  H^1([0,1],\GraphnA)$.
\end{theorem}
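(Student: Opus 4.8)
The plan is to apply the direct method of the calculus of variations to the functional in \eqref{eq:relaxed_shape_graph_match}, which I denote
\[
E(c) = \int_0^1 \bar{G}_{c(t)}^{n}(\partial_t c(t), \partial_t c(t))\, dt + \lambda \|\mu_{c(1)} - \mu_{c_1}\|_{\V}^2,
\]
with admissible set the paths $c(\cdot) \in H^1([0,1], \GraphnA)$ satisfying $c(0) = c_0$. Let $(c_j)_j$ be a minimizing sequence and $m = \inf E \geq 0$. Since the varifold term is nonnegative, the geodesic energy of $c_j$ is bounded by $m+1$ for $j$ large. By the length--energy (Cauchy--Schwarz) inequality, the induced distance obeys $\overline{\operatorname{dist}}_{\bar G^n}(c_0, c_j(t))^2 \leq \int_0^1 \bar G^n_{c_j(s)}(\partial_s c_j, \partial_s c_j)\, ds \leq m+1$ for every $t$, so the whole sequence remains in a fixed metric ball $B(c_0,\delta)$. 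This is where completeness of $\GraphnA$ (Theorem~\ref{thm:grap:completeness}) and, more crucially, the two-sided bound of Lemma~\ref{lemma:bounds_Sobolev_metrics} enter: on this ball $\bar G^n$ is equivalent to the flat $H^n$ norm and one has a uniform lower bound $|\partial_\theta c_j(t,\theta)| \geq C^{-1}$. The energy bound thus upgrades to a uniform bound on $(c_j)$ in $H^1([0,1], H^n)$ (jointly in the path variable and the curve variable, componentwise).

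With this bound I would extract a subsequence (not relabeled) converging weakly in $H^1([0,1], H^n)$ to some $c_\infty$. The $H^1$-in-time bound yields uniform $1/2$-Hölder continuity of $c_j$ into $H^n$, so that, combined with the compact spatial embedding $H^n([0,1]) \hookrightarrow\hookrightarrow H^{n'}([0,1])$ for $3/2 < n' < n$ and Arzelà--Ascoli, one obtains strong convergence $c_j \to c_\infty$ in $C([0,1], H^{n'})$. Since $n' > 3/2$ gives $H^{n'} \hookrightarrow C^1$, this convergence holds in $C([0,1], C^1)$, hence pointwise in $t$ and uniformly in $(t,\theta)$ up to first spatial derivatives. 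I then check admissibility of the limit: the uniform immersion bound $|\partial_\theta c_j| \geq C^{-1}$ passes to the limit by uniform convergence of $\partial_\theta c_j$, so $c_\infty(t)$ is an immersion for every $t$; the connectivity relations \eqref{conn1}--\eqref{conn3} are closed conditions given by boundary-point evaluations, hence preserved in the limit, so $c_\infty(t) \in \GraphnA$; and $c_\infty(0) = c_0$.

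The final step is the semicontinuity of $E$ along this subsequence. The geodesic energy is quadratic, hence convex, in the velocity $\partial_t c$, while its coefficients depend on $c(t)$ only through quantities ($|\partial_\theta c|$, $\ell_c$, arc-length derivatives) that converge strongly by the $C^1$ convergence above and are uniformly nondegenerate on $B(c_0,\delta)$. Standard weak lower semicontinuity for integral functionals convex in the velocity and continuous in the state then gives $\int_0^1 \bar G^n_{c_\infty(t)}(\partial_t c_\infty, \partial_t c_\infty)\, dt \leq \liminf_j \int_0^1 \bar G^n_{c_j(t)}(\partial_t c_j, \partial_t c_j)\, dt$. For the attachment term I would use the assumed embedding $\H \hookrightarrow C_0^1(\R^d \times \Sp^{d-1})$: since $c_j(1) \to c_\infty(1)$ in $C^1$, the varifold map $c \mapsto \mu_c$ is continuous into $\V$ with respect to the dual norm \eqref{eq:norm_var}, so $\|\mu_{c_j(1)} - \mu_{c_1}\|_{\V}^2 \to \|\mu_{c_\infty(1)} - \mu_{c_1}\|_{\V}^2$. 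Combining, $E(c_\infty) \leq \liminf_j E(c_j) = m$, and since $c_\infty$ is admissible this forces $E(c_\infty) = m$.

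I expect the main obstacle to be the weak lower semicontinuity of the geodesic energy, precisely because the metric coefficients involve the spatial derivative $|\partial_\theta c|$ in denominators: controlling these requires the uniform non-degeneracy of the immersions along the minimizing sequence. This is exactly what the scale-invariant metric provides through Lemma~\ref{lemma:bounds_Sobolev_metrics} together with the completeness results, and it is the reason the same argument fails for the constant-coefficient metric on open curves, where a minimizing sequence could a priori shrink toward a non-immersion.
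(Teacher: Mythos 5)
Your proposal is correct and follows essentially the same route as the paper's proof: the direct method with confinement of the minimizing sequence to a metric ball, the two-sided bounds and immersion lower bound of Lemma~\ref{lemma:bounds_Sobolev_metrics}, weak compactness in $H^1([0,1],H^n)$ upgraded to strong $C([0,1],C^1)$ convergence (the paper cites the Aubin--Dubinskii lemma where you argue via H\"older equicontinuity and Arzel\`a--Ascoli, which is the same compactness fact), preservation of the immersion and adjacency constraints in the limit, continuity of the varifold term under $C^1$ convergence (the paper's Lemma~\ref{lemma:var_norm_continuity}), and weak lower semicontinuity of the geodesic energy (which the paper imports from Theorem 5.2 of \cite{bruveris7completeness} rather than re-deriving from convexity in the velocity, as you sketch).
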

\begin{remark}
For $K=1$, i.e., when $c_0$ is a single open curve, Theorem \ref{thm:existence_graph_match} gives in particular the existence of solutions to the relaxed open curve matching problem introduced in \cite{bauer2019relaxed} for the class of scale-invariant metrics of order $n\geq 2$. We point out that the  proof can be adapted almost verbatim to also recover the existence of solutions in the case of a single closed curve for both the constant coefficient and scale invariant Sobolev metrics \eqref{eq:def_Sobolev_metric} and \eqref{eq:def_Sobolev_metric_length_inv} as long as $n\geq 2$. 
\end{remark}

Our proof will follow the standard approach from the calculus of variations.  We will  need the following lemma on the convergence of varifold norms, whose proof we postpone to the appendix. 
\begin{lemma}
\label{lemma:var_norm_continuity}
 Let $(c_p)_{p \in \N}$ be a sequence of $C^1$ immersions of $D$ (with $D=[0,1]$ or $D=S^1$) into $\R^d$ such that $c_p$ converges to a $C^1$ immersion $c_{\infty}$ for the $\|\cdot\|_{1,\infty}$ norm. Then $\mu_{c_p}$ converges to $\mu_{c_\infty}$ in $\V$. 
\end{lemma}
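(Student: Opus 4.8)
The plan is to work directly from the dual definition of the varifold norm in \eqref{eq:norm_var}, reducing the claim to a uniform estimate on test functions. Writing $\|\mu_{c_p} - \mu_{c_\infty}\|_{\V} = \sup_{\|\omega\|_{\H}\leq 1} (\mu_{c_p} - \mu_{c_\infty} \mid \omega)$, it suffices to bound $|(\mu_{c_p} - \mu_{c_\infty}\mid\omega)|$ by a quantity $\epsilon_p \to 0$ that is independent of $\omega$ in the unit ball of $\H$. The continuous embedding $\H \hookrightarrow C_0^1(\R^d \times S^{d-1},\R)$ is what makes this possible: for every such $\omega$ one has $\|\omega\|_{\infty} \leq C_{\H}$ and $\|d\omega\|_{\infty} \leq C_{\H}$, so $\omega$ is uniformly bounded and uniformly Lipschitz with a constant controlled solely by $C_{\H}$.

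First I would expand the difference using \eqref{eq:def_mu_c} with the integrand $F_c(\theta) = \omega\!\left(c(\theta), \partial_{\theta} c(\theta)/|\partial_{\theta} c(\theta)|\right)|\partial_{\theta} c(\theta)|$, and split $F_{c_p} - F_{c_\infty}$ into a term in which only the argument of $\omega$ changes and a term in which only the length element $|\partial_{\theta} c|$ changes. The second term is handled immediately using $\|\omega\|_{\infty} \leq C_{\H}$ together with the uniform convergence $|\partial_{\theta} c_p| \to |\partial_{\theta} c_\infty|$ supplied by the $\|\cdot\|_{1,\infty}$ convergence. For the first term I would invoke the Lipschitz bound $|\omega(x,u) - \omega(y,v)| \leq C_{\H}(|x-y| + |u-v|)$, the uniform convergence $c_p \to c_\infty$, and a uniform upper bound on $|\partial_{\theta} c_p|$ that holds for large $p$ again by $C^1$ convergence.

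The main obstacle, and the step where the immersion hypothesis is truly needed, is controlling the unit tangent term $u_p := \partial_{\theta} c_p / |\partial_{\theta} c_p|$. Since $c_\infty$ is a $C^1$ immersion on the compact domain $D$, there is a lower bound $|\partial_{\theta} c_\infty| \geq m > 0$, and $C^1$ convergence then forces $|\partial_{\theta} c_p| \geq m/2$ for $p$ large. Hence $\partial_{\theta} c_p$ and $\partial_{\theta} c_\infty$ remain in a compact annulus away from the origin, on which the normalization map $v \mapsto v/|v|$ is Lipschitz, giving $\|u_p - u_\infty\|_{\infty} \leq L\,\|\partial_{\theta} c_p - \partial_{\theta} c_\infty\|_{\infty} \to 0$. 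Collecting these estimates and integrating over the finite-measure domain $D$ yields $|(\mu_{c_p} - \mu_{c_\infty}\mid\omega)| \leq C_{\H}\,|D|\,\epsilon_p$ with $\epsilon_p \to 0$ uniformly in $\omega$; taking the supremum over the unit ball of $\H$ then completes the argument.
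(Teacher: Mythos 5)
Your proof is correct and follows essentially the same route as the paper's: reduce via the continuous embedding $\H \hookrightarrow C_0^1(\R^d\times S^{d-1})$ to a uniform estimate over the $\|\cdot\|_{1,\infty}$ unit ball, then split the integrand into a length-element difference term and an $\omega$-argument difference term, controlled respectively by the sup bound and the Lipschitz bound on $\omega$. If anything, you are more explicit than the paper on the one delicate point—that the lower bound $|\partial_\theta c_\infty|\geq m>0$ on the compact domain, together with $C^1$ convergence, makes the normalization $v\mapsto v/|v|$ Lipschitz on the relevant set so that the unit tangents converge uniformly—whereas the paper absorbs this into an unspecified constant depending on $D$ and $c_\infty$.
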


\begin{proof}[Proof of Theorem~\ref{thm:existence_graph_match}]
Let us define the energy of \eqref{eq:relaxed_shape_graph_match}:
\begin{equation*}
    E(c) = \int_0^1 \bar{G}_{c(t)}^{n}(\partial_t{c}(t),\partial_t{c}(t)) dt  + \lambda \|\mu_{c(1)} - \mu_{c_1}\|_{\V}^2 .
\end{equation*}
Note that the infimum of $E$ is finite as one can for instance consider the constant path $c(t)=c_0$ for all $t \in [0,1]$ for which $E(c)=\|\mu_{c_0} - \mu_{c_1}\|_\V^2<+\infty$. Then, introduce a minimizing sequence $(\tilde{c}_p)_{p \in \N}$ in $H^1([0,1],\GraphnA)$, i.e. $E(\tilde{c}_p)\rightarrow \inf E(c)<+\infty$. We denote the different component curves of the shape graph $\tilde{c}_p(t)$ by ${\tilde{c}}^k_p(t)$. Both terms in the energies $E(\tilde{c}_p)$ are bounded, meaning that there exists $\delta>0$ such that for all $p \in \mathbb{N}$ and $k=1,\ldots,K$: 
\begin{equation*}
  \int_0^1 G_{\tilde{c}^k_p(t)}^{n}(\partial_t{\tilde{c}}^k_p(t),\partial_t{\tilde{c}}^k_p(t)) dt \leq   \int_0^1 \bar{G}_{\tilde{c}_p(t)}^{n}(\partial_t{\tilde{c}}_p(t),\partial_t{\tilde{c}}_p(t)) dt < \delta .
\end{equation*}
Since $\tilde{c}^k_p(0) = c^k_0$, we obtain that for all $t\in[0,1]$, $\tilde{c}^k_p(t)$ belongs to the ball $B(c^k_0,\delta)$ for the Sobolev distance $\operatorname{dist}_{G^n}$. It follows from Lemma \ref{lemma:bounds_Sobolev_metrics} that there exists $C>0$ such that
\begin{equation*}
    \|\partial_t{\tilde{c}}_p^k(t)\|^2_{H^n}\leq C G_{\tilde{c}^k_p(t)}^{n}(\partial_t{\tilde{c}}^k_p(t),\partial_t{\tilde{c}}^k_p(t))
\end{equation*}
and $|\partial_{\theta}{\tilde{c}}^k_p(t,\theta)|\geq C^{-1}$ for all $p\in \mathbb{N}$, $k=1,\ldots,K$, $t\in [0,1]$ and $\theta \in [0,1]$. Since ${\tilde{c}}^k_p(t) = c_0^k + \int_0^t {\partial_t{\tilde{c}}}^k_p(u) du$, we get that $\|{\tilde{c}}^k_p(t)\|_{H^n} \leq \|c_0\|_{H^n} + \sqrt{C \delta}$ and thus 
\begin{equation*}
\|{\tilde{c}}^k_p\|_{H^1([0,1],H^n([0,1])}^2 \leq (\|c_0\|_{H^n}+\sqrt{C\delta})^2+C\delta.
\end{equation*}
Therefore the sequence $({\tilde{c}}^k_p)_{p\in\mathbb{N}}$ is bounded in $H^1([0,1],H^n([0,1],\R^d))$, i.e., up to extracting a subsequence, ${\tilde{c}}^k_p$ converges weakly to some $\tilde{c}^k$ in $H^1([0,1],H^n([0,1],\R^d))$. As $H^1([0,1],H^n([0,1],\R^d))$ is compactly embedded into $C([0,1],C_0^1([0,1]))$ by the Aubin-Dubinskii lemma \cite{amann2000compact}, we deduce that for all $t\in[0,1]$, ${\tilde{c}}^k_p(t)$ converges to $\tilde{c}^k(t)$ in $\|\cdot\|_{1,\infty}$ on $[0,1]$. Also, the above lower bound $|\partial_{\theta}{\tilde{c}}^k_p(t,\theta)|\geq C^{-1}$ leads to $|\partial_{\theta}\tilde{c}^k(t,\theta)|\geq C^{-1}$ for all $t$ and $\theta$, from which we deduce that $\tilde{c}^k \in H^1([0,1],\Imm^n([0,1],\R^d)$ for each $k=1,\ldots,K$. Furthermore, the convergence in $\|\cdot\|_{1,\infty}$ of the $c^k_p$ also implies that the set of linear constraints defining the graph structure given by $\Adj$ via \eqref{conn1},\eqref{conn2} and \eqref{conn3} are also satisfied by the limit curves $\tilde{c}^k$. Thus, defining $\tilde{c}(t) = \prod_{k=1}^{K} \tilde{c}^k(t)$, we have that $\tilde{c} \in H^1([0,1],\GraphnA)$. 

Moreover, by Lemma \ref{lemma:var_norm_continuity}, as $\tilde{c}^k_p(1) \xrightarrow[p\rightarrow \infty]{} \tilde{c}^k(1)$ in $\|\cdot\|_{1,\infty}$ for all $k$, we get that $\mu_{{\tilde{c}}^k_p(1)} \rightarrow \mu_{\tilde{c}^k(1)}$ in $\V$. Then $\mu_{\tilde{c}_p}= \sum_{k=1}^{K} \mu_{\tilde{c}^k_p} \xrightarrow[\|\cdot\|_{\V}]{} \sum_{k=1}^{K} \mu_{\tilde{c}^k} = \mu_{\tilde{c}}$ and as a result $\|\mu_{\tilde{c}_p(1)} - \mu_{c_1}\|_\V \xrightarrow[p\rightarrow \infty]{} \|\mu_{\tilde{c}(1)} - \mu_{c_1}\|_\V$. Finally, as follows from the proof of Theorem 5.2 in \cite{bruveris7completeness}, the mapping $c\mapsto \int_0^1 G^n_{c(t)}(\partial_t{c}(t),\partial_t{c}(t)) dt$ is weakly lower semicontinuous on $H^1([0,1],H^n([0,1],\R^d))$, from which we deduce that the first term in the energy $E$ is weakly lower semicontinuous with respect to the convergence of $\tilde{c}_p$ to $\tilde{c}$ in $\prod_{k=1}^{K} H^1([0,1],H^n([0,1],\R^d))$. We conclude that
$E(\tilde{c}) \leq \liminf\limits_{p \rightarrow +\infty} E(\tilde{c}_p) = \inf E(c)$ and consequently that $\tilde{c}$ is a minimizer of $E$.  
\end{proof}

\section{Elastic matching model with weights: mathematical analysis}
\label{sec:model_weights}
In this section, we extend the elastic Sobolev metric shape graph matching framework of the previous section by incorporating weights and weight changes along the shape graphs.   

\subsection{Limitations of the previous elastic matching model}
\label{ssec:limitations}
We start by motivating the need for such an extended approach. Indeed, the model presented so far is primarily built to compare shape graphs of the same topology, such as in the example shown on Fig. \ref{fig:match_2branches_fixed_weights}, which was obtained with our proposed shape graph matching algorithm (with fixed weights) that will be introduced in Section \ref{sec:optim_algo}. Although the matching in \eqref{eq:relaxed_shape_graph_match} is inexact and may in practice be able to handle small inconsistencies including topological noise, it remains inadequate for many typical datasets (e.g. trees, arterial networks) which routinely involve shape graphs with significant topological differences. Attempting to compare two such shape graphs based on \eqref{eq:relaxed_shape_graph_match} can lead to highly singular and unnatural behaviour in the estimated geodesic and distance, as illustrated in Fig. \ref{fig:match_branches_compare_fixed_vs_source_weights}. This is in great part due to the fact that our model does not yet allow for partial matching constraints. 

\begin{figure}[htbp]
\begin{center}
\begin{tabular}{ccccc}
\includegraphics[trim = 45mm 15mm 45mm 10mm ,clip,width=2.5cm]{./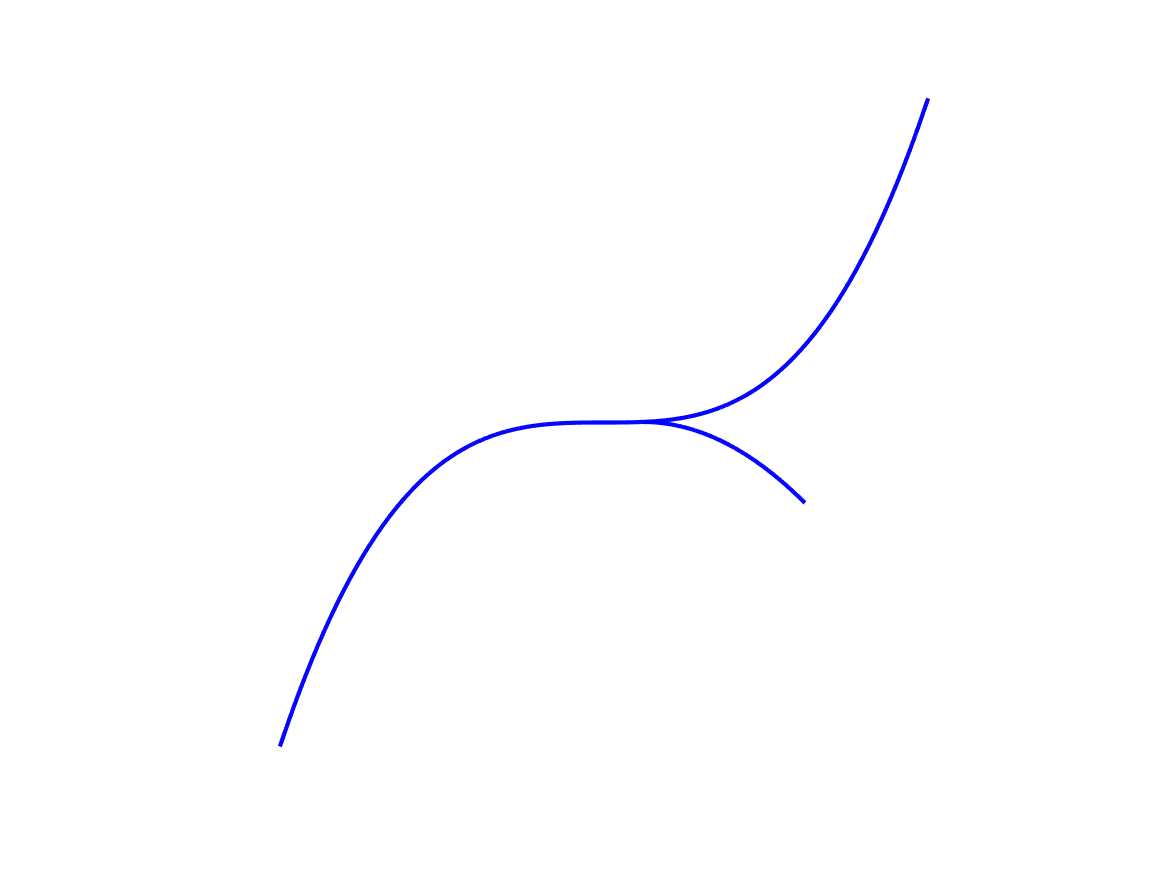}&
\includegraphics[trim = 45mm 15mm 45mm 10mm ,clip,width=2.5cm]{./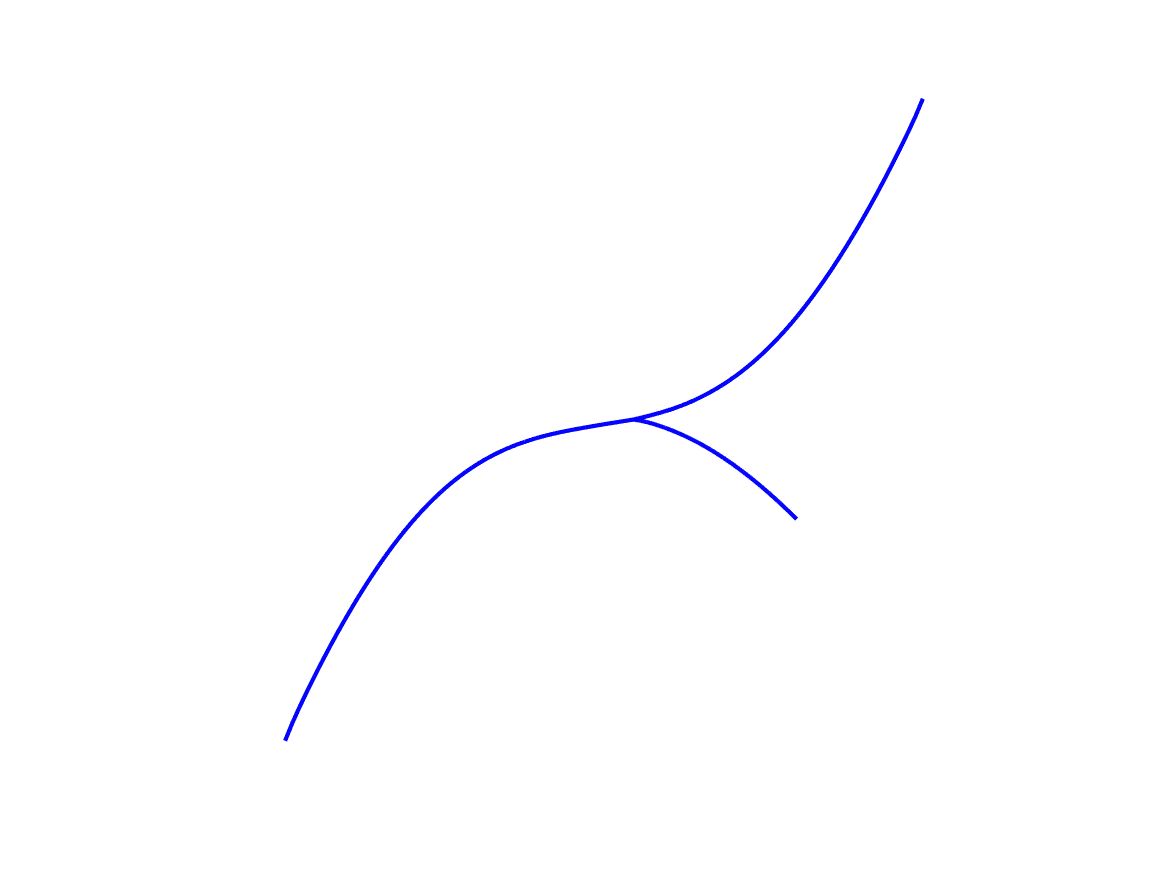}&
\includegraphics[trim = 45mm 15mm 45mm 10mm ,clip,width=2.5cm]{./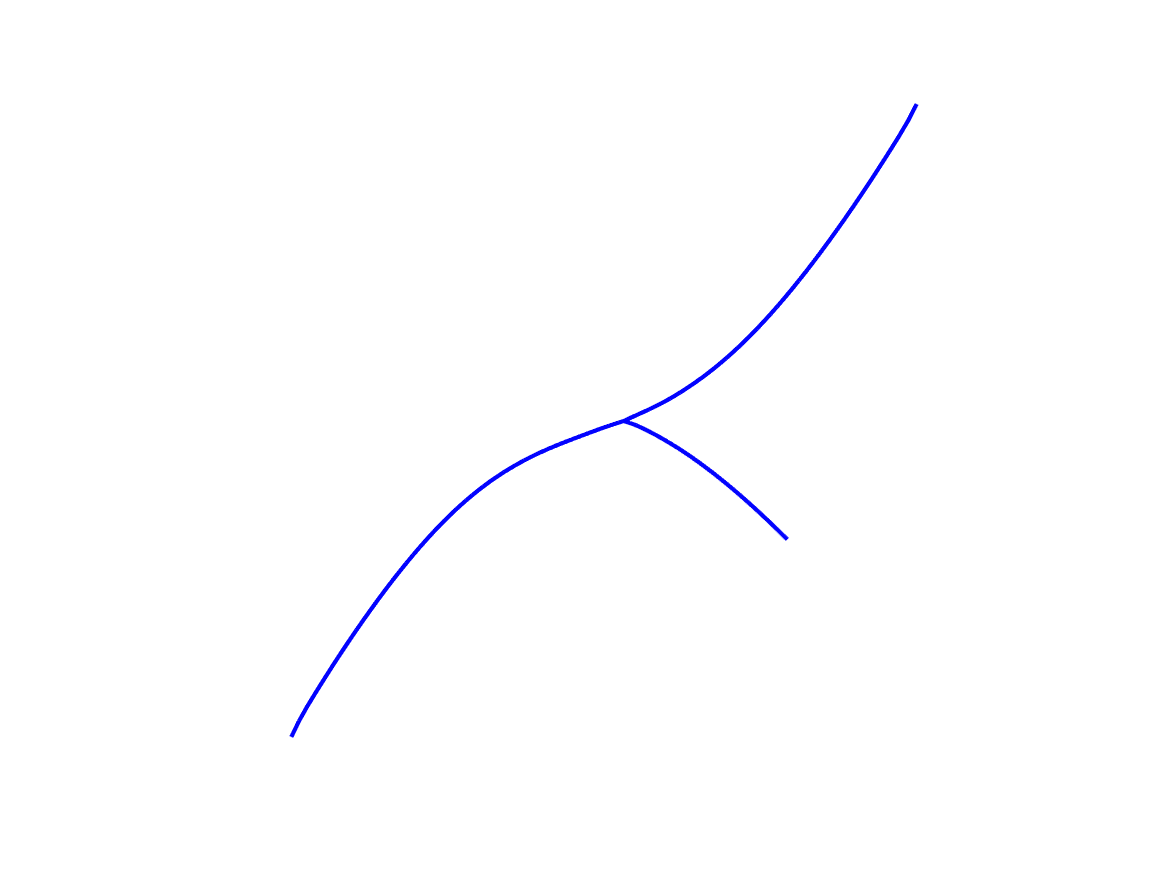}&
\includegraphics[trim = 45mm 15mm 45mm 10mm ,clip,width=2.5cm]{./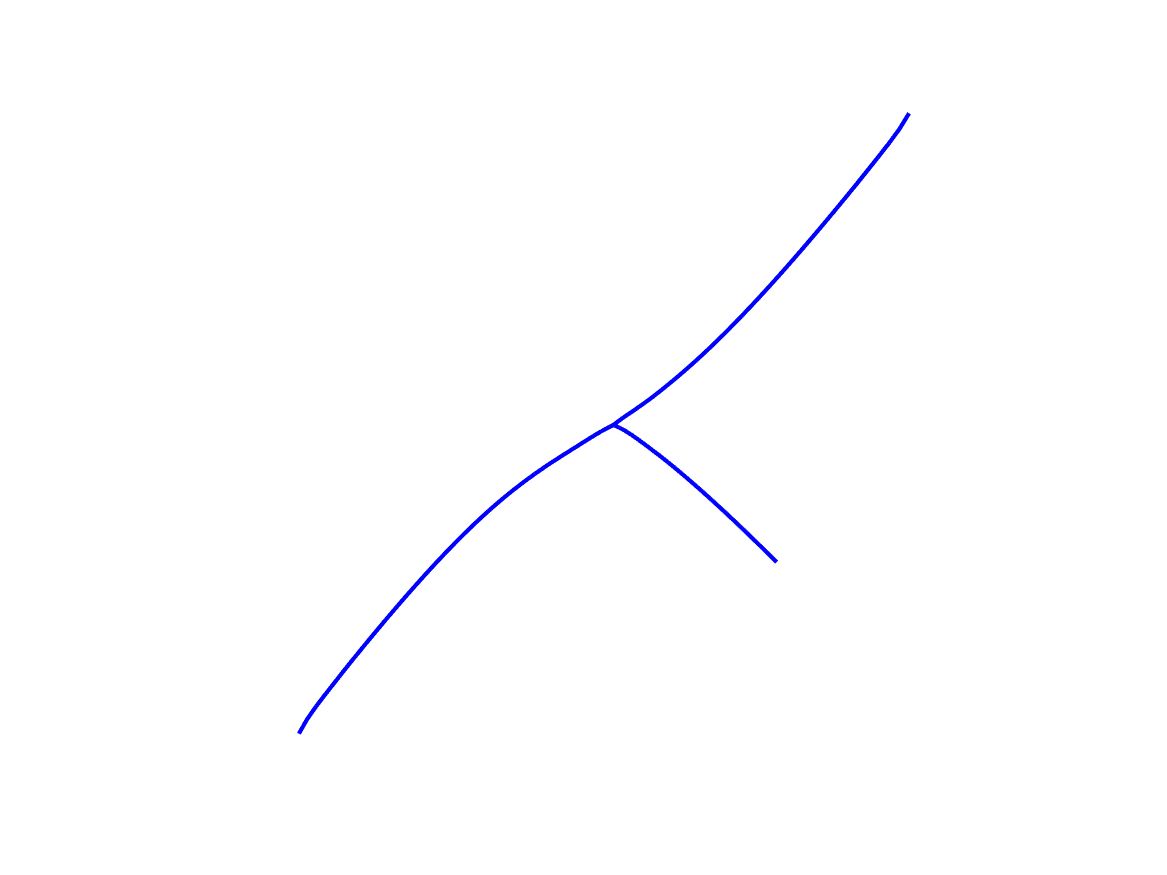}&
\includegraphics[trim = 45mm 15mm 45mm 10mm ,clip,width=2.5cm]{./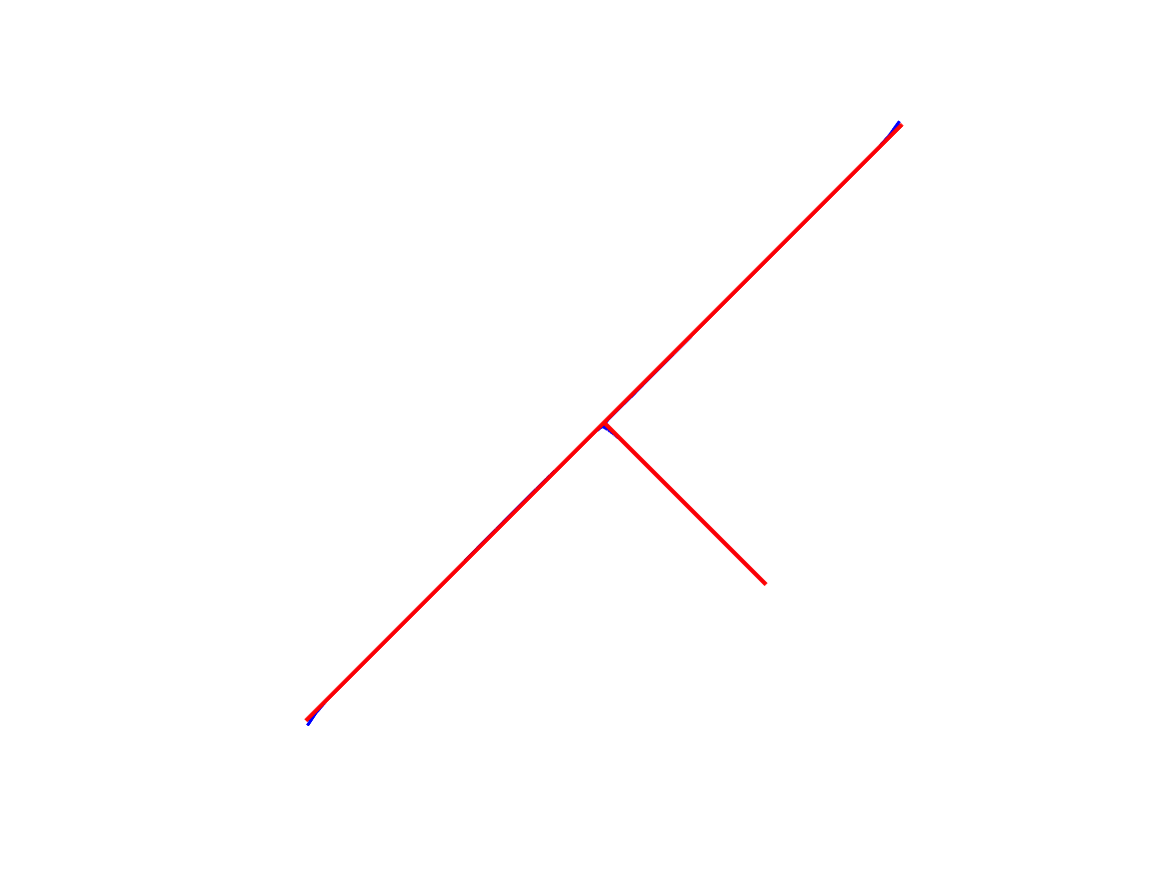}\\
$t=0$ & $t=0.25$ & $t=0.5$ & $t=0.75$ & $t=1$ 
\end{tabular}
\end{center}
\caption{Geodesic between two shape graphs with the same topology: the source $c_0$ (left) and target $c_1$ (in red on the right). The target is overlayed on the transformed source $c(1)$ at $t=1$. The estimated geodesic distance is $\overline{\operatorname{dist}}(c_0,c_1)=0.83$.} \label{fig:match_2branches_fixed_weights}
\end{figure}

\begin{figure}[htbp]
\begin{center}
\begin{tabular}{ccccc}
\includegraphics[trim = 45mm 15mm 45mm 10mm ,clip,width=2.5cm]{./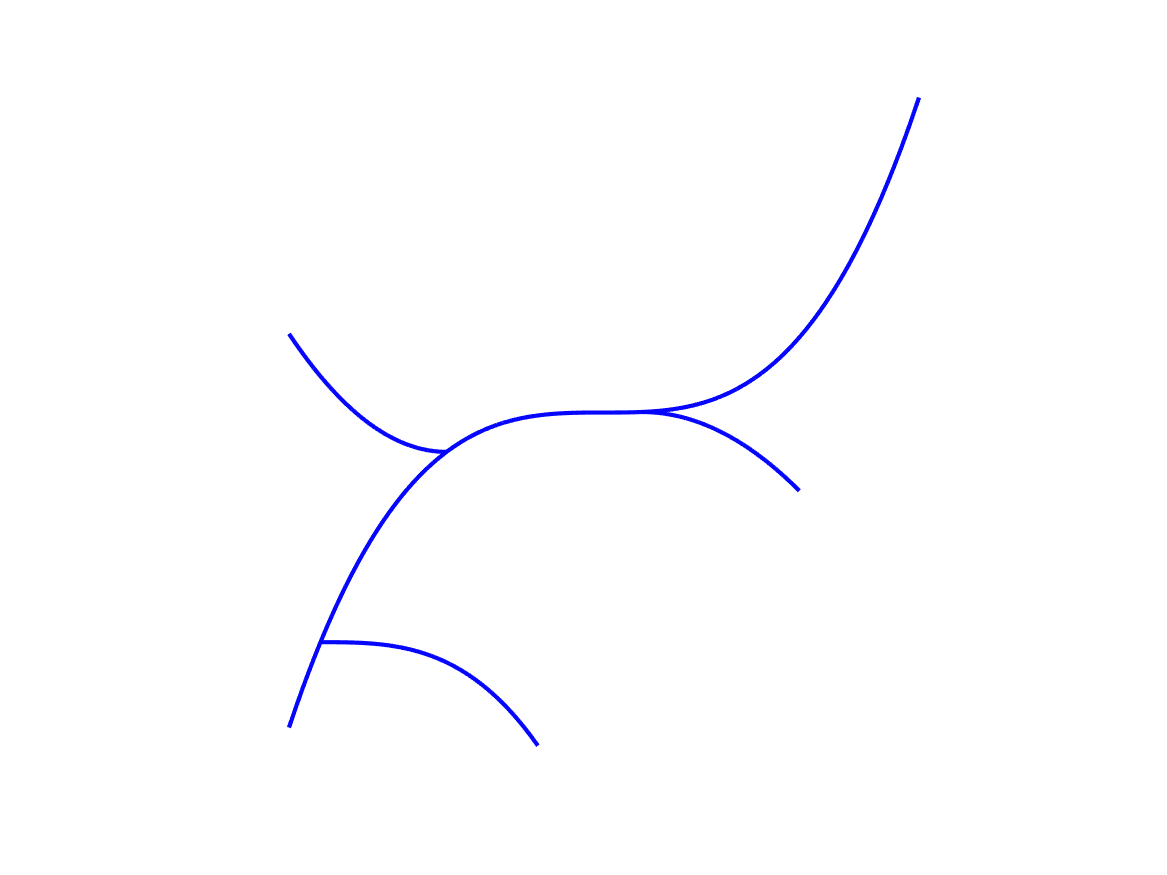}&
\includegraphics[trim = 45mm 15mm 45mm 10mm ,clip,width=2.5cm]{./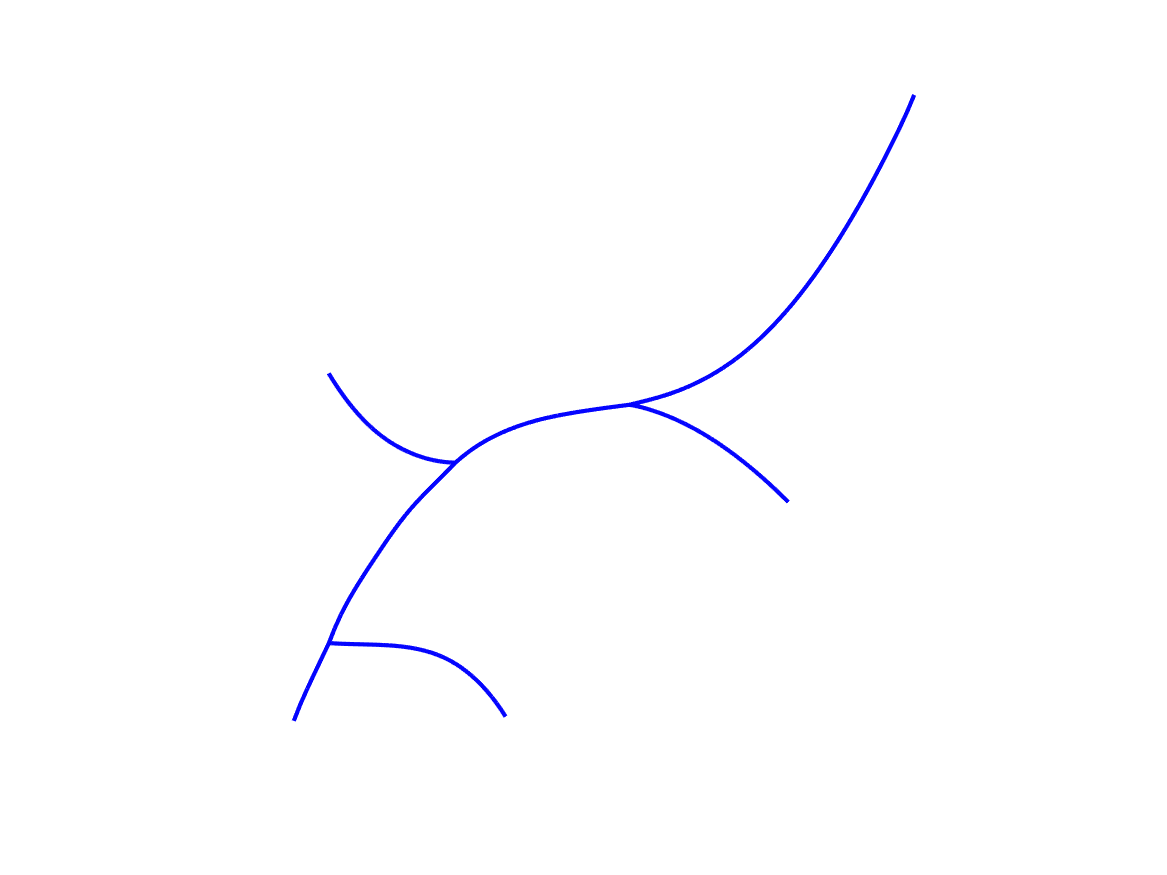}&
\includegraphics[trim = 45mm 15mm 45mm 10mm ,clip,width=2.5cm]{./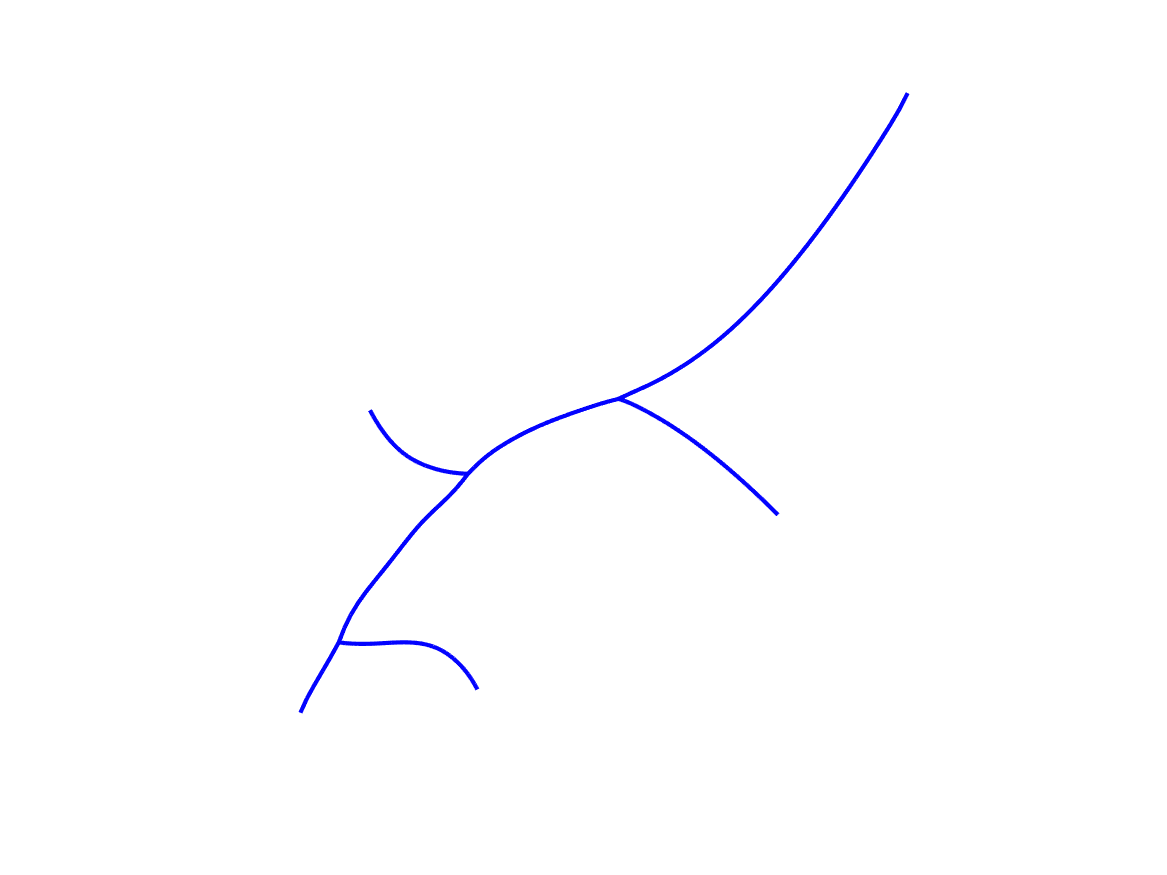}&
\includegraphics[trim = 45mm 15mm 45mm 10mm ,clip,width=2.5cm]{./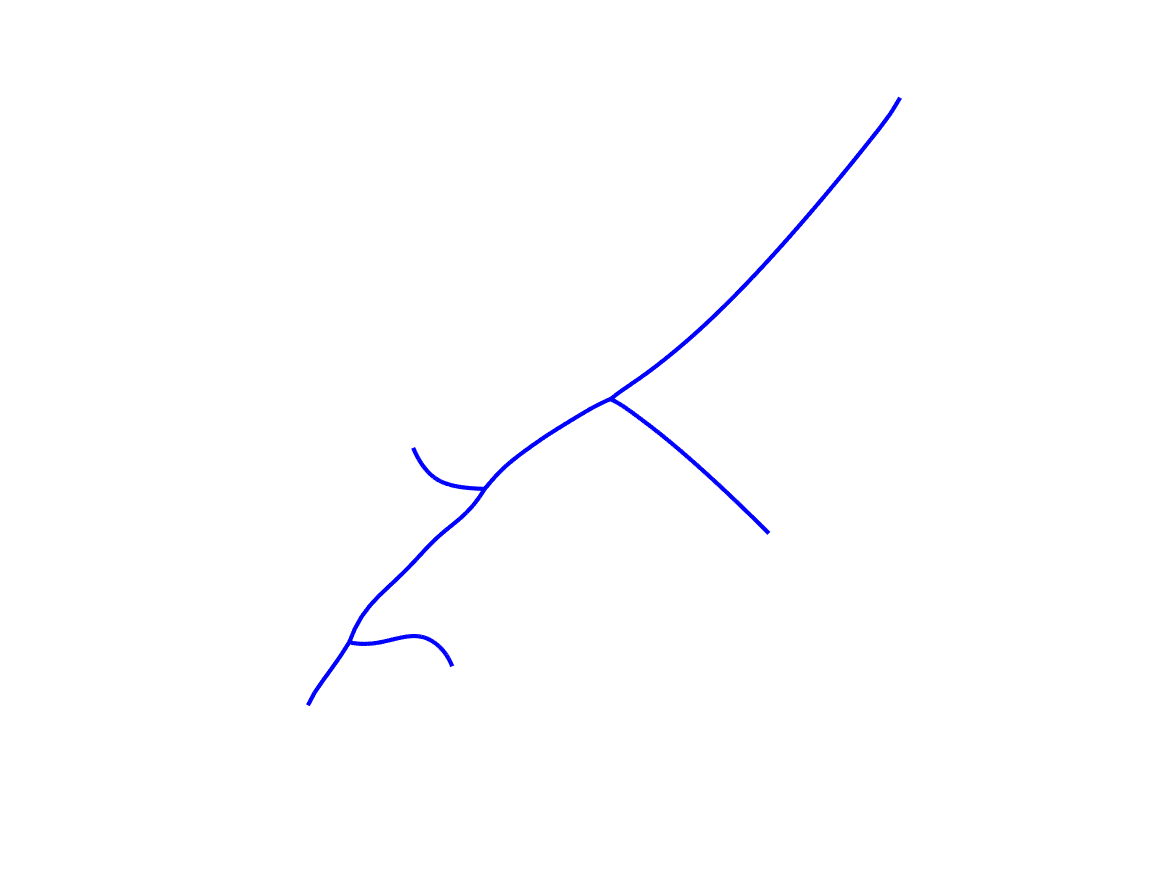}&
\includegraphics[trim = 45mm 15mm 45mm 10mm ,clip,width=2.5cm]{./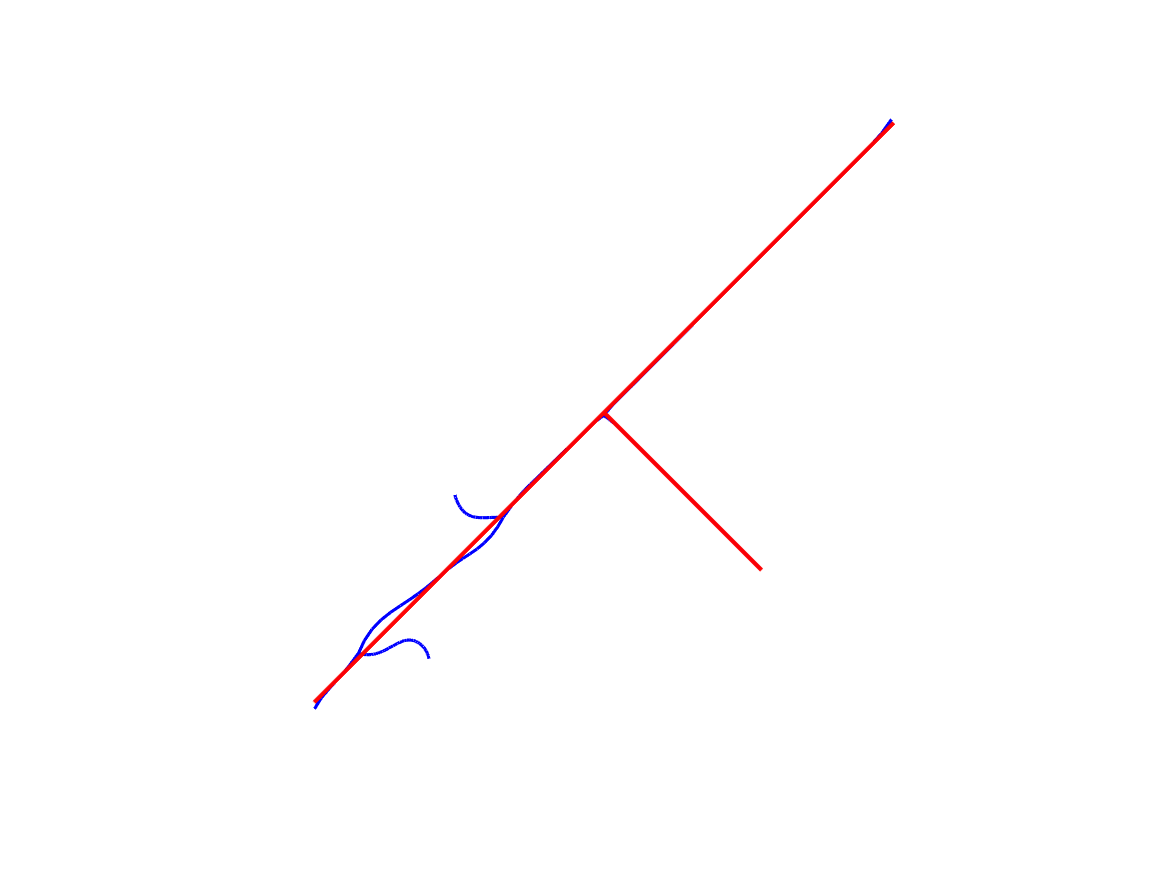}\\
\includegraphics[trim = 45mm 15mm 45mm 10mm ,clip,width=2.5cm]{./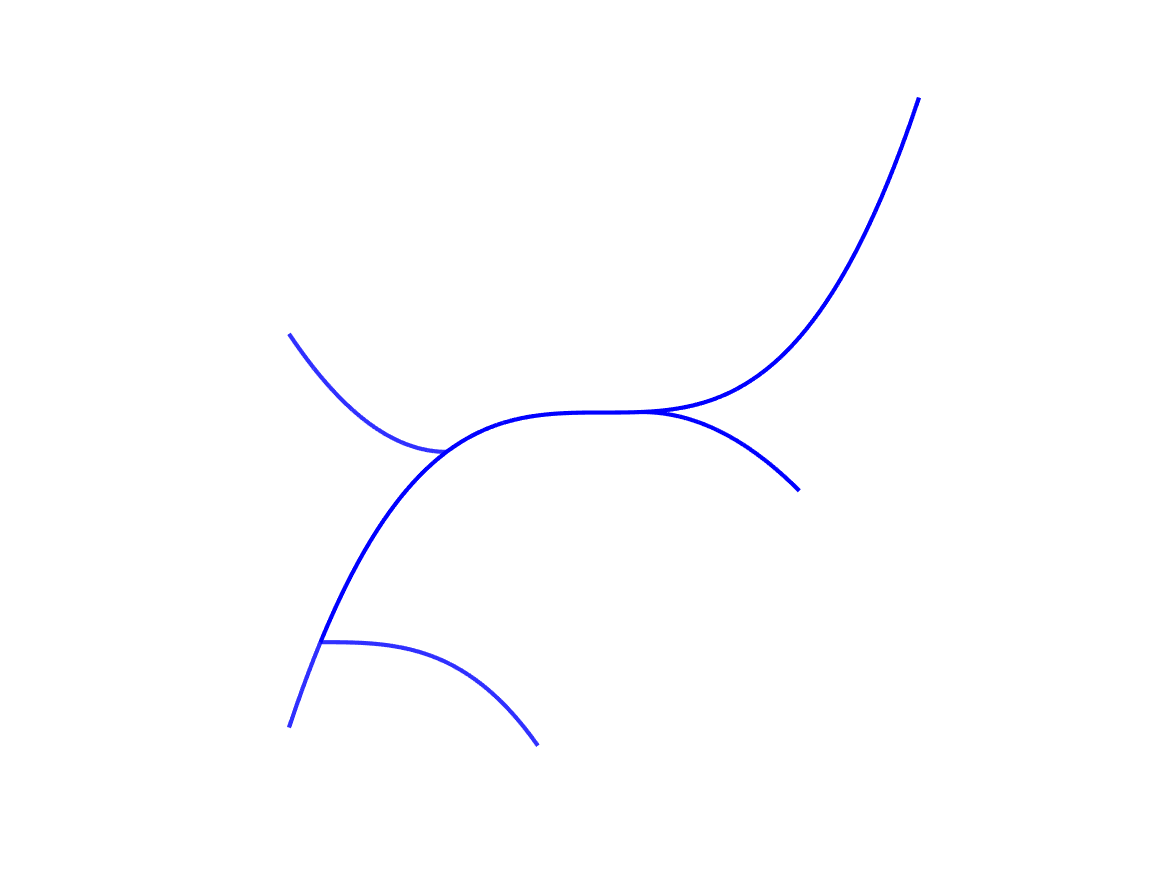}&
\includegraphics[trim = 45mm 15mm 45mm 10mm ,clip,width=2.5cm]{./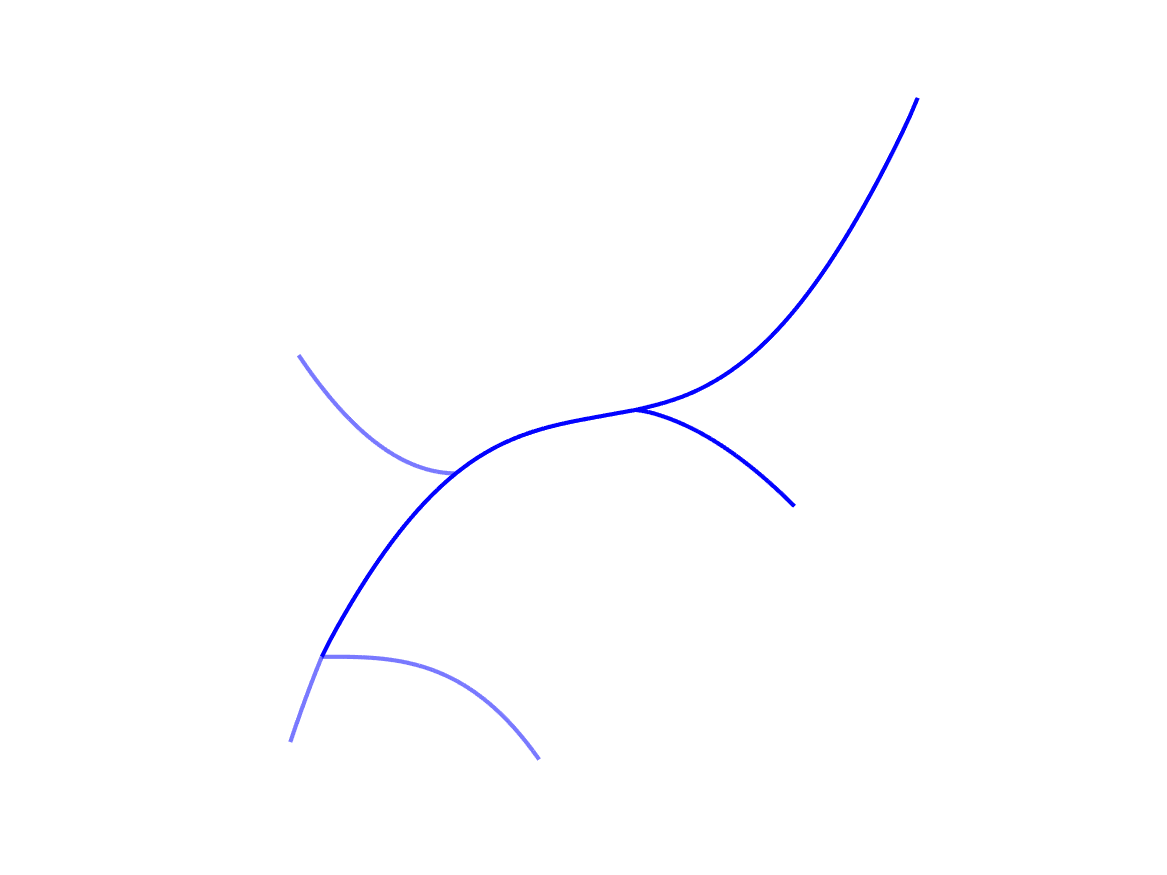}&
\includegraphics[trim = 45mm 15mm 45mm 10mm ,clip,width=2.5cm]{./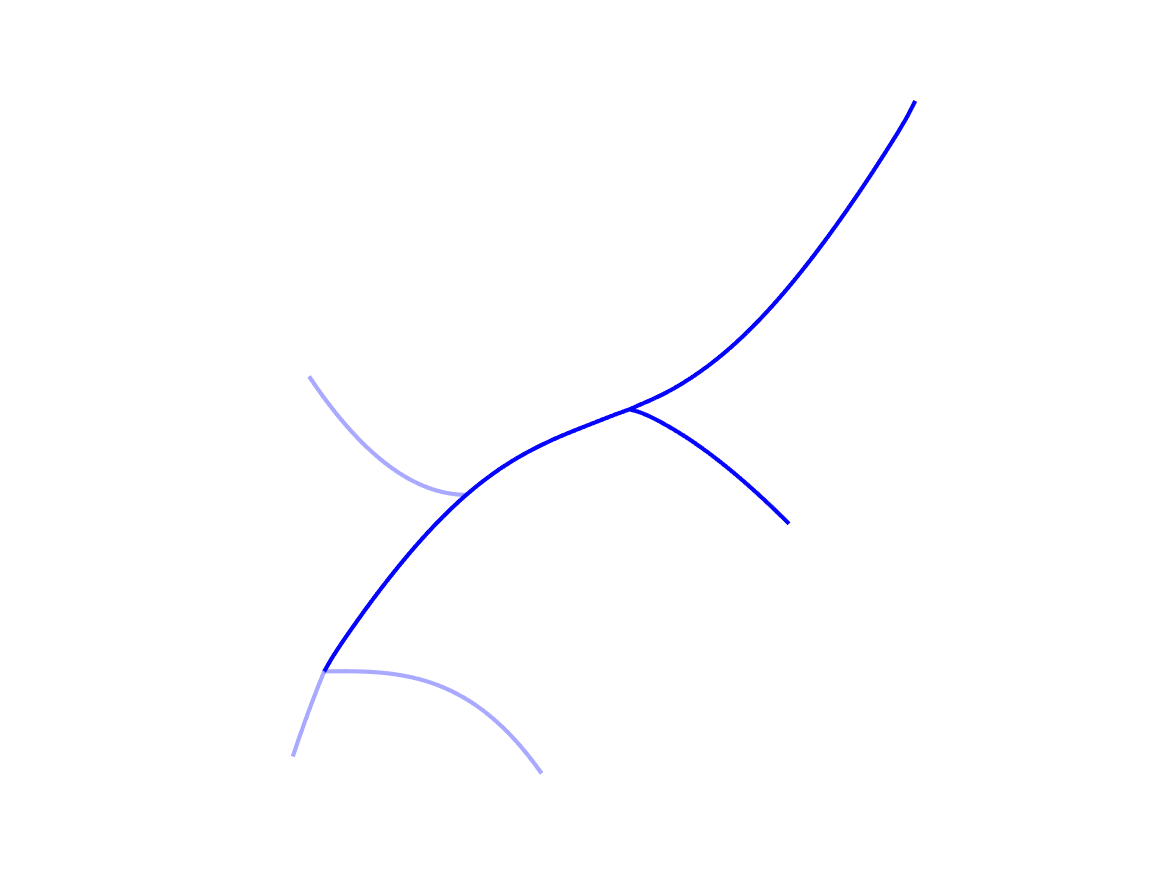}&
\includegraphics[trim = 45mm 15mm 45mm 10mm ,clip,width=2.5cm]{./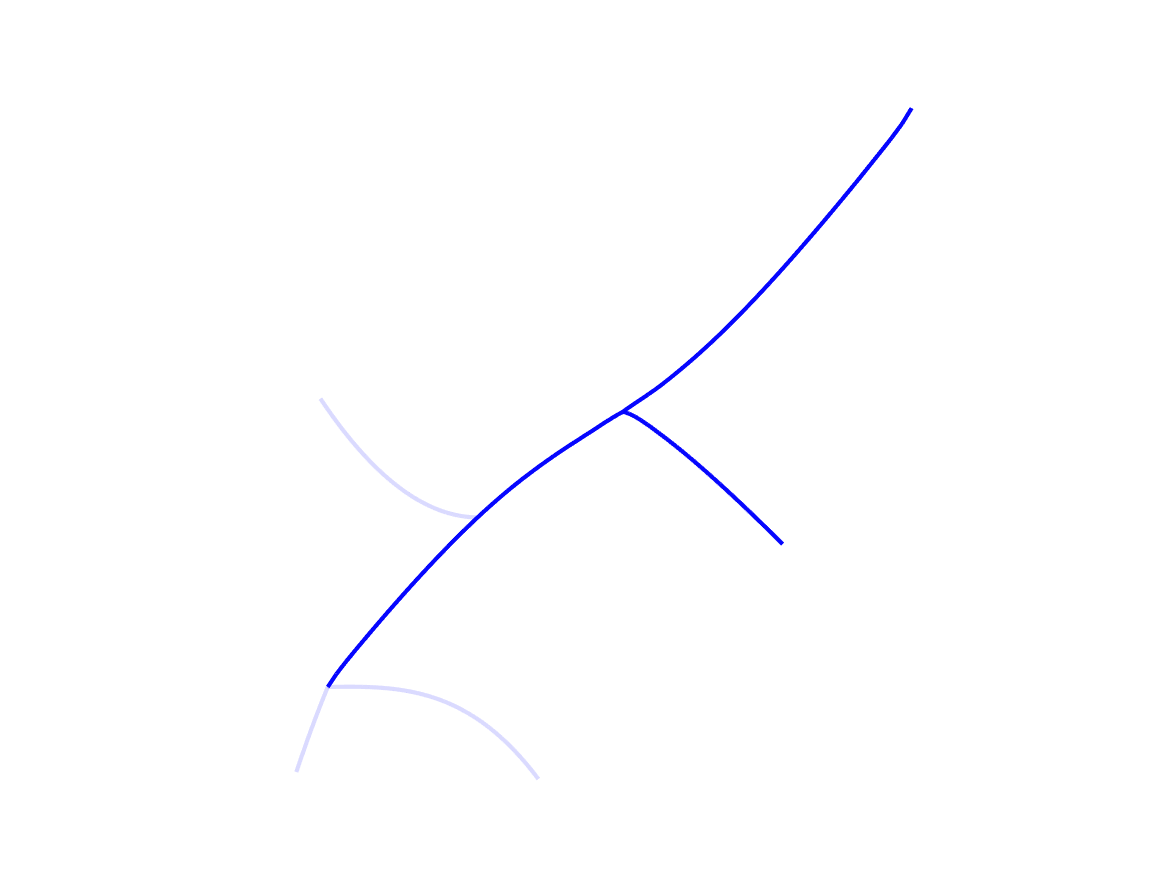}&
\includegraphics[trim = 45mm 15mm 45mm 10mm ,clip,width=2.5cm]{./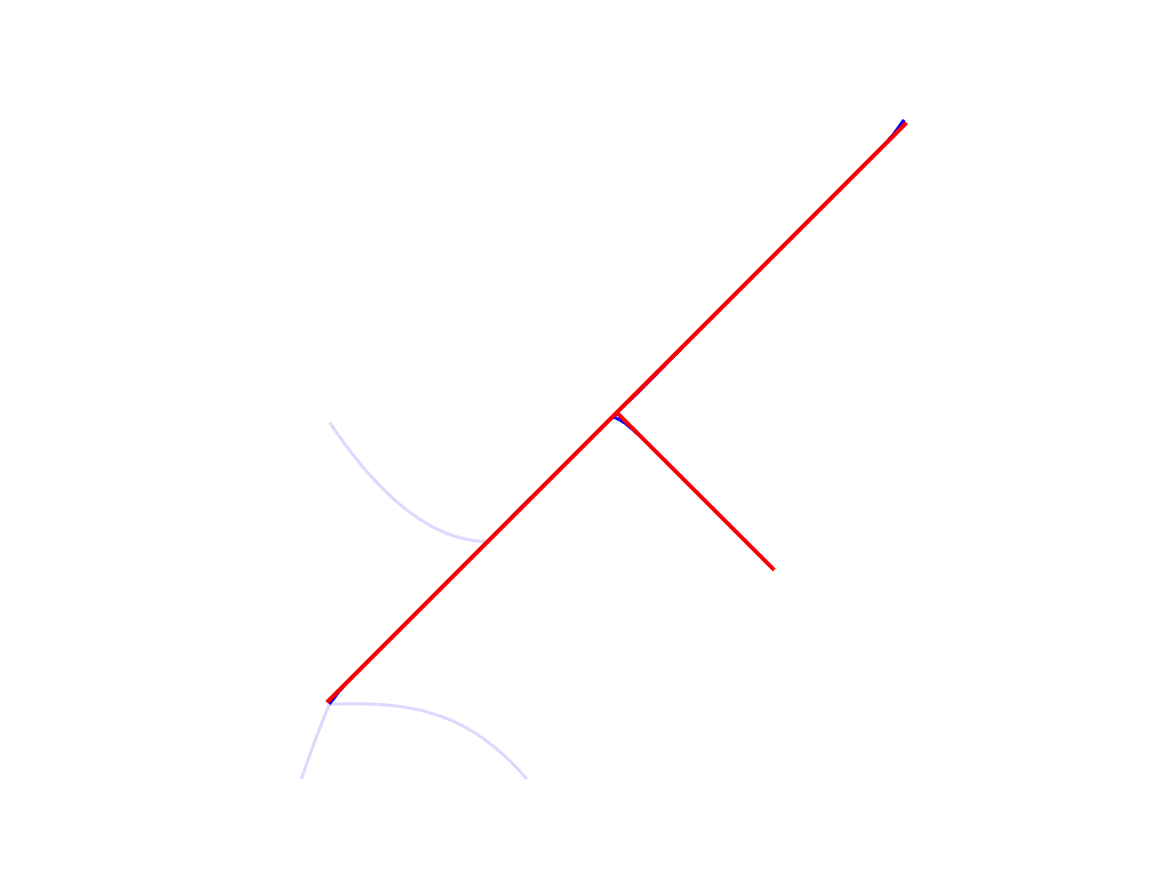}\\
$t=0$ & $t=0.25$ & $t=0.5$ & $t=0.75$ & $t=1$ 
\end{tabular}
\end{center}
\caption{Geodesic between source (blue at $t=0$) and target (red at $t=1$) shape graphs having different topologies. (Top row) We use the relaxed shape graph matching framework described in \eqref{eq:relaxed_shape_graph_match}, which only allows for a geometric deformation of the source. The estimated geodesic distance is $\overline{\operatorname{dist}}(c_0,c_1)=1.44$, around 1.7 times higher than in Fig. \ref{fig:match_2branches_fixed_weights}. (Bottom row) Result obtained from the weighted shape graph matching framework described in \eqref{eq:relaxed_weighted_shape_graph_match} that jointly estimates a deformation and weight changes on the source. Components of the source which get ``erased" are colored in progressively transparent shades of blue. The estimated geodesic distance here is $\overline{\operatorname{dist}}(c_0,c_1)=0.77$, now fairly comparable to Fig. \ref{fig:match_2branches_fixed_weights}.} \label{fig:match_branches_compare_fixed_vs_source_weights}
\end{figure}

In what follows, we propose to indirectly (and only partially) address this difficult challenge by augmenting the previous notion of shape graphs with a weight function defined on the shape, leveraging the flexibility of the varifold representation for that purpose. This leads to a new matching formulation where, in combination to the geometric matching process, one can vary the mass of different components of the source or target shape graphs. In particular, this allows us to remove certain parts of a shape graph when no corresponding components can be found in the other shape, as shown in Fig. \ref{fig:match_branches_compare_fixed_vs_source_weights}.   

\subsection{A new variational problem} 
Let $\Adj$ be a fixed adjacency matrix. We first define the space of parametrized weighted shape graphs of regularity $r>3/2$ having adjacency matrix $\Adj$ as the space of couples $(c,\rho)$ where $c\in \GraphrA$ and $\rho$ is a non-negative function of bounded variation on the shape graph. By this, we mean specifically that $\rho=\prod_{k=1}^{K} \rho^k$ is a Cartesian product of functions $\rho^k \in BV([0,1],\R_+)$, where $BV([0,1],\R_+)$ is the space of non-negative functions of bounded variation on the interval $[0,1]$, c.f. \cite{ambrosio2000functions} for a complete presentation of those spaces and their properties. For each $k=1,\ldots,K$ and $\theta \in [0,1]$, we can interpret $\rho^k(\theta)$ as the weight (or mass) assigned to the point $c^k(\theta)$ of the curve $c^k$. We will denote by $\WGraphrA$ the space of all such weighted shape graphs, and by $\WGraphr = \bigcup_{\mathcal{A}} \WGraphrA$  the space of all weighted shape graphs of regularity $r$ for all adjacency matrices $\Adj$. 
We can now formulate a generalized matching problem between two given weighted shape graphs as follows:
\begin{framed}
Given $(c_0,\rho_0),(c_1,\rho_1) \in \WGraphn$ we consider the minimization problem 
\begin{equation}
    \label{eq:relaxed_weighted_shape_graph_match}
    \inf \left\{\int_0^1 \bar{G}_{c(t)}^{n}(\partial_t{c}(t),\partial_t{c}(t)) dt  + \alpha F_{\rho_0}(c(1),\deltarho) + \lambda \|\mu_{c(1),\rho_0+\deltarho} - \mu_{c_1,\rho_1}\|_{\V}^2 \right\},
\end{equation}
where the infimum is taken over all paths $c(\cdot) \in H^1([0,1],\GraphnA)$ satisfying the initial constraint $c(0)=c_0$, and also over all weight change functions $\deltarho \in (BV([0,1],\R))^K$. In this framework, we refer to $(c_0, \rho_0)$ as the source shape graph, $(c(1), \rho_0 + \deltarho)$ as the transformed source, and $(c_1, \rho_1)$ as the target.

Moreover, $\alpha,\lambda>0$ are balancing parameters between the different terms,  $F_{\rho_0}(c(1), \deltarho)$ is a regularizing term for the weight change function defined on the transformed source, see Section~\ref{ssec:weigh_regularizer}, and the last term denotes the extension of the varifold norm for weighted shape graphs, see Section~\ref{ssec:varifold_weightedshapegraphs}.
\end{framed}

We reemphasize that the minimum value in \eqref{eq:relaxed_weighted_shape_graph_match} is independent of the ordering of the components of both shape graphs $c_0$ and $c_1$ as well as the parametrizations of each component despite the fact that we are not explicitly optimizing over those groups. This follows from the invariances of all terms in the matching functional and the independence of the varifold data attachment term to reparametrizations and permutations of the components of $c_0$ and $c_1$.

\begin{remark}
Note that the model formulated in \eqref{eq:relaxed_weighted_shape_graph_match} is asymmetric as it accounts for weight changes on the transformed source only. However, it can be adapted almost straightforwardly to weight changes on the target by instead optimizing over a function $\deltarho$ that transforms $(c_1,\rho_1)$ as $(c_1,\rho_1+\deltarho)$. The existence result of Theorem \ref{thm:existence_weighted_graph_match} below also holds in this case through a similar reasoning. Even more generally, one could consider the problem in which both the source and target weights are estimated by jointly optimizing over two weight change functions $\deltarho_0$ and $\deltarho_1$. The existence of solutions in this case is again easy to prove, although the design of the constraint function $\tilde{F}$, which will be introduced in \ref{ssec:weigh_regularizer}, becomes critical in order to avoid the trivial solution in which both $\rho_0 +\deltarho_0$ and $\rho_1 +\deltarho_1$ are equal to $0$ everywhere on the shape graphs.
\end{remark}

\subsubsection{The weight regularization term $F$}\label{ssec:weigh_regularizer}
We now introduce a specific choice for the regularization term $F$ that allows us to obtain an existence result for this new minimization problem: For a general weighted shape graph $(c,\rho) \in \WGraphnA$, we define the total variation ($TV$) norm of $\rho$ as the sum of the $TV$ norms on $[0,1]$ of each individual weight function:
\begin{equation}
 \label{eq:def_TV_shape_graph}
 \|\rho\|_{TV} = \sum_{k=1}^{K} \|\rho^k\|_{TV,[0,1]}. 
\end{equation}
In the context of \eqref{eq:relaxed_weighted_shape_graph_match}, we can now define the weight regularizer for the weight change function on the transformed source as follows:
\begin{equation}\label{eq:weight_reg_TV}
F_{\rho_0}(c(1), \deltarho)=\|\deltarho\|_{TV}+ \tilde \beta \tilde F_{\rho_0}(c(1), \deltarho),
\end{equation}
where $\tilde F$ is a  generic additional term that can be used to impose further constraints on the weight change function, with $\tilde \beta > 0$ being a balancing parameter. For instance, one could design these constraints to ensure the non-negativity of $\rho_0+\deltarho$, or as we shall see in the experiments section, one could push the values of $\rho_0+\deltarho$ to stay close to either $0$ or $1$ by choosing $\tilde F$ to be a double-well potential function. 

\begin{remark}
We point out that the $TV$ norm defined above does not depend on parametrization. Indeed, for any $BV$ function $\rho$ on $[0,1]$ and reparametrization $\varphi \in \Diff([0,1])$, by the usual properties of $BV$ functions and of the $TV$ norm, one has that $\rho \circ \varphi$ is also in $BV$ and that $\|\rho \circ \varphi\|_{TV,[0,1]} = \|\rho\|_{TV,[0,1]}$. It is also worth noting that if $c: [0,1]\rightarrow \R^d$ is an embedding, then the function $\rho \circ c^{-1}$ is a $BV$ function on the manifold curve $c([0,1])$ as defined in e.g. \cite{ambrosio2000functions}, and that $\|\rho\|_{TV,[0,1]}$ coincides with the total variation norm of $\rho \circ c^{-1}$ along that curve. 
\end{remark}
The following classical compactness property in the space $BV$ (that follows from Theorem 3.23 in \cite{ambrosio2000functions}) is the main reason for choosing to define $F$ using the $TV$ norm of $\deltarho$:
\begin{lemma}
\label{lemma:compactness_BV}
 If $(\rho_m)$ is a sequence in $BV([0,1],\R)$ with $\sup_{m \in \mathbb{N}} \|\rho_m\|_{TV,[0,1]} <+\infty$, then there exists $\rho \in BV([0,1],\R)$ such that, up to the extraction of a subsequence, one has $\rho_{m} \rightarrow \rho$ in $L^1([0,1])$, and for almost all $\theta \in [0,1]$, $\rho_{m}(\theta) \rightarrow \rho(\theta)$.  
\end{lemma}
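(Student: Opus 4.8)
The plan is to derive this directly from the classical compactness theorem for functions of bounded variation (Theorem 3.23 in \cite{ambrosio2000functions}), which is exactly the tool the statement points to. The one preliminary point to settle is the precise meaning of $\|\cdot\|_{TV,[0,1]}$: for the conclusion to hold one must read it as the full $BV$ norm $\|\rho\|_{TV,[0,1]} = \|\rho\|_{L^1([0,1])} + |D\rho|([0,1])$, where $D\rho$ denotes the distributional derivative measure and $|D\rho|([0,1])$ its total variation. This reading is consistent with $\|\cdot\|_{TV,[0,1]}$ being a genuine norm (the bare total variation seminorm vanishes on constants), and it is exactly what is needed to prevent a bounded-variation sequence from escaping to infinity, as the constants $\rho_m \equiv m$ would otherwise have vanishing total variation with no $L^1$ limit.

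First I would record the consequences of the hypothesis $\sup_{m \in \N} \|\rho_m\|_{TV,[0,1]} < +\infty$: the sequence $(\rho_m)$ is bounded in $L^1([0,1])$ and the total variations $|D\rho_m|([0,1])$ are uniformly bounded, i.e. $(\rho_m)$ is a bounded sequence in $BV([0,1],\R)$. Next I would invoke the $BV$ compactness theorem. Since $[0,1]$ is a bounded interval, and in particular a bounded open set with Lipschitz (indeed smooth) boundary, hence an extension domain, Theorem 3.23 of \cite{ambrosio2000functions} applies and yields a subsequence, still denoted $(\rho_m)$, together with a limit $\rho \in L^1([0,1])$ such that $\rho_m \to \rho$ in $L^1([0,1])$. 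The lower semicontinuity of the total variation under $L^1$ convergence then gives $|D\rho|([0,1]) \leq \liminf_m |D\rho_m|([0,1]) < +\infty$, so that $\rho \in BV([0,1],\R)$, as required.

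Finally, the almost everywhere statement follows from a standard measure-theoretic argument: $L^1$ convergence implies convergence in measure, and convergence in measure always admits an almost everywhere convergent subsequence. Extracting such a further subsequence gives $\rho_m(\theta) \to \rho(\theta)$ for almost every $\theta \in [0,1]$, which completes the argument. I do not expect any genuine obstacle here, since the result is entirely classical; the only points requiring care are the interpretation of the norm discussed above and the verification that $[0,1]$ meets the domain hypotheses of the cited compactness theorem, both of which are immediate.
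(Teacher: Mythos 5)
Your proof is correct and takes essentially the same route as the paper, whose entire argument for this lemma is the citation of Theorem 3.23 in \cite{ambrosio2000functions}. Your explicit caveat that the hypothesis must be read as a bound on the full $BV$ norm $\|\rho\|_{L^1([0,1])} + |D\rho|([0,1])$ --- since a bound on the variation seminorm alone is defeated by $\rho_m \equiv m$ --- together with the routine extraction of an a.e.\ convergent subsequence from $L^1$ convergence, simply makes explicit what the paper's citation leaves implicit.
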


\subsubsection{The varifold norm on the space of weighted shape graphs}\label{ssec:varifold_weightedshapegraphs}
Next, we discuss the extension of the varifold norm to the space of weighted shape graphs. For $n\geq 2$, we  represent any weighted shape graph $(c,\rho) \in \WGraphn$ as the varifold $\mu_{c,\rho} \in \V$ given by $\mu_{c,\rho} = \sum_{k=1}^{K} \rho^k \cdot \mu_{c^k}$, where for each $k$, we define $\rho^k \cdot \mu_{c^k}$ for any given test function $\omega \in C_0(\R^d \times S^{d-1},\R)$ by:
\begin{equation*}
    (\rho^k \cdot \mu_{c^k} | \omega) = \int_{0}^1 \rho^k(\theta) \omega\left(c^k(\theta),\frac{\partial_{\theta} c^{k }(\theta)}{|\partial_{\theta}c^{k}(\theta)|}\right) |\partial_{\theta}c^{k}(\theta)| d\theta.
\end{equation*}
Thus we can still rely on $\|\cdot\|_{\V}$ to compare weighted shape graphs, even when they have different numbers of component curves or adjacency matrices. We have the following technical lemma, which will be of importance in the proof of our existence result:
\begin{lemma}
\label{lemma:var_norm_continuity2}
 Let $(c_m,\rho_m)$ be a sequence of weighted shape graphs in $\WGraphnA$ for some fixed adjacency matrix $\Adj \in \R^{2K \times 2K}$ such that for all $m \in \N$ and all $k=1,\ldots,K$, $c_m^k$ is a $C^1$ immersion and $(c_m^k)$ converges uniformly to a $C^1$ immersion $c^k$, with the same holding true for its derivatives. Assume also that for all $k$, $(\rho^k_m)$ converges in $L^1$ to some $\rho^k \in BV([0,1],\R_+)$. Then, for $c=\prod_{k=1}^{K} c^k$ and $\rho=\prod_{k=1}^{K} \rho^k$, we have $\mu_{c_m,\rho_m} \xrightarrow[m\rightarrow \infty]{\|\cdot\|_\V} \mu_{c,\rho}$. 
\end{lemma}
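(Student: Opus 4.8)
The plan is to exploit the additivity of the varifold representation over components together with the dual characterization of $\|\cdot\|_{\V}$ in \eqref{eq:norm_var}, thereby reducing the statement to uniform estimates over the unit ball of $\H$. Since $\mu_{c_m,\rho_m}-\mu_{c,\rho}=\sum_{k=1}^K (\rho_m^k\cdot\mu_{c_m^k}-\rho^k\cdot\mu_{c^k})$, the triangle inequality for $\|\cdot\|_{\V}$ reduces the claim to proving, for each fixed $k$, that $\|\rho_m^k\cdot\mu_{c_m^k}-\rho^k\cdot\mu_{c^k}\|_{\V}\to 0$. I would then fix $k$ and drop the superscript. For a test function $\omega$ with $\|\omega\|_{\H}\leq 1$, I introduce the shorthand $f_m(\theta)=\omega(c_m(\theta),\partial_\theta c_m(\theta)/|\partial_\theta c_m(\theta)|)\,|\partial_\theta c_m(\theta)|$ and $f(\theta)$ defined analogously with $c$, and split
\[
(\rho_m\cdot\mu_{c_m}-\rho\cdot\mu_c\,|\,\omega)=\int_0^1 \rho_m\,(f_m-f)\,d\theta+\int_0^1 (\rho_m-\rho)\,f\,d\theta,
\]
treating the ``geometric'' term (first) and the ``weight'' term (second) separately.

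For the geometric term, the continuous embedding $\H\hookrightarrow C_0^1(\R^d\times S^{d-1})$ gives $\|\omega\|_{\infty},\|d\omega\|_{\infty}\leq C_{\H}$ for every $\omega$ in the unit ball, so $\omega$ is uniformly bounded and uniformly Lipschitz. Since $c$ is an immersion, $|\partial_\theta c|$ is bounded below by a positive constant, and the uniform convergences $c_m\to c$ and $\partial_\theta c_m\to\partial_\theta c$ then force the unit tangents $\partial_\theta c_m/|\partial_\theta c_m|$ to converge uniformly to $\partial_\theta c/|\partial_\theta c|$. Combining the Lipschitz bound on $\omega$ with these uniform convergences yields a constant $\varepsilon_m\to 0$, independent of $\omega$, with $\|f_m-f\|_{\infty}\leq C_{\H}\varepsilon_m$. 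Since $\rho_m\to\rho$ in $L^1$ implies $M:=\sup_m\|\rho_m\|_{L^1}<\infty$, the geometric term is bounded by $M\,C_{\H}\,\varepsilon_m$, uniformly over the unit ball.

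For the weight term, Hölder's inequality gives $|\int_0^1(\rho_m-\rho)f\,d\theta|\leq\|\rho_m-\rho\|_{L^1}\|f\|_{\infty}$, and $\|f\|_{\infty}\leq\|\omega\|_{\infty}\|\partial_\theta c\|_{\infty}\leq C_{\H}\|\partial_\theta c\|_{\infty}$; this tends to $0$ by the $L^1$ convergence of $\rho_m$, again uniformly over the unit ball. Taking the supremum over $\omega$ with $\|\omega\|_{\H}\leq 1$ then yields $\|\rho_m\cdot\mu_{c_m}-\rho\cdot\mu_c\|_{\V}\to 0$, and summing over $k$ concludes the proof.

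The main obstacle I anticipate is making every estimate uniform over the infinite family of admissible test functions $\omega$; this is precisely what the embedding hypothesis $\H\hookrightarrow C_0^1$ buys us, converting the abstract dual-norm supremum into explicit bounds via $\|\omega\|_{1,\infty}\leq C_{\H}\|\omega\|_{\H}$. A secondary technical point is the justification of the uniform convergence of the unit tangent vectors, which hinges on the uniform lower bound $|\partial_\theta c|\geq \text{const}>0$ from the immersion property of the limit (and, for large $m$, of the $c_m$). The structure parallels the unweighted Lemma~\ref{lemma:var_norm_continuity}, the only genuinely new ingredient being the $L^1$ handling of the weight factor through the above splitting.
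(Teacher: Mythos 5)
Your proof is correct and follows essentially the same route as the paper's: both reduce via the embedding $\H \hookrightarrow C_0^1$ to a supremum over test functions bounded in $\|\cdot\|_{1,\infty}$, split the difference of weighted varifolds by adding and subtracting a cross term so that the weight discrepancy is paired with the $L^1$ convergence of $\rho_m^k$ and the geometric discrepancy with the Lipschitz bound on $\omega$ plus uniform convergence of positions, unit tangents, and speeds (the tangent convergence resting on the immersion lower bound, exactly as in the paper). The only difference is cosmetic — the paper freezes $\omega$ at the moving curve and splits $\rho_m^k|\partial_\theta c_m^k| - \rho^k|\partial_\theta c^k|$, whereas you freeze $\rho_m^k$ and split the geometric factor $f_m - f$ — and the resulting estimates are term-for-term equivalent.
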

The proof of this lemma is postponed to the appendix.

\subsection{Existence of solutions}
We are now able to show the well-posedness of the weighted shape graph matching problem~\eqref{eq:relaxed_weighted_shape_graph_match}, thereby generalizing Theorem \ref{thm:existence_graph_match}:
\begin{theorem}
\label{thm:existence_weighted_graph_match}
 Let $(c_0,\rho_0)$ and $(c_1,\rho_1)$ in $\WGraphn$ and $\Adj$ the adjacency matrix of $c_0$. Assume that $n\geq 2$ and that $\bar{G}^n$ is a scale invariant metric on $\GraphnA$. We further assume that $\H$ is continuously embedded into $C_0^1(\R^{d}\times \Sp^{d-1})$, that the weight regularization term $F$ is given by~\eqref{eq:weight_reg_TV}, and that $\deltarho \mapsto \tilde F_{\rho_0}(\cdot, \deltarho)$ is continuous with respect to the convergence a.e. of $\deltarho$. 
 
 Then there exist $c(\cdot) \in  H^1([0,1],\GraphnA)$ and $\deltarho \in (BV([0,1],\R))^K$ achieving the infimum in \eqref{eq:relaxed_weighted_shape_graph_match}.
\end{theorem}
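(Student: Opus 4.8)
The plan is to follow the direct method of the calculus of variations, reusing the argument behind Theorem~\ref{thm:existence_graph_match} for the geometric path and supplementing it with a compactness argument for the weight change functions. First I would check that the infimum in \eqref{eq:relaxed_weighted_shape_graph_match} is finite by testing the constant path $c(t) = c_0$ together with $\deltarho = 0$, for which the energy reduces to $\alpha F_{\rho_0}(c_0, 0) + \lambda \|\mu_{c_0,\rho_0} - \mu_{c_1,\rho_1}\|_\V^2 < +\infty$. I then fix a minimizing sequence $(\tilde c_p, \deltarho_p)_{p \in \N}$ whose energy converges to the infimum, and aim to extract a subsequence converging to a minimizer.

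For the geometric component, the argument is essentially that of Theorem~\ref{thm:existence_graph_match}: boundedness of the Riemannian energy term together with Lemma~\ref{lemma:bounds_Sobolev_metrics} confines each component curve $\tilde c_p^k(t)$ to a fixed metric ball, yielding a uniform $H^1([0,1],H^n([0,1],\R^d))$ bound as well as a uniform lower bound on $|\partial_{\theta} \tilde c_p^k|$. Up to a subsequence, $\tilde c_p^k$ converges weakly in $H^1([0,1],H^n)$ to some $\tilde c^k$, and by the Aubin--Dubinskii compact embedding into $C([0,1],C_0^1([0,1]))$ \cite{amann2000compact} the convergence is in $\|\cdot\|_{1,\infty}$ at each time $t$; the derivative lower bound and the linear connectivity constraints \eqref{conn1}--\eqref{conn3} then pass to the limit, so $\tilde c = \prod_{k=1}^K \tilde c^k \in H^1([0,1],\GraphnA)$.

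The genuinely new ingredient is the treatment of the weights. Since the Riemannian energy and varifold terms are nonnegative and $\tilde F$ is bounded below (as holds for the natural choices, e.g.\ a double-well potential), the bound on the total energy forces $\|\deltarho_p\|_{TV}$ to be uniformly bounded; applying Lemma~\ref{lemma:compactness_BV} componentwise, after a further subsequence extraction, produces $\deltarho^k \in BV([0,1],\R)$ with $\deltarho_p^k \to \deltarho^k$ in $L^1$ and almost everywhere. Consequently $\rho_0^k + \deltarho_p^k \to \rho_0^k + \deltarho^k$ in $L^1$, and combined with the $\|\cdot\|_{1,\infty}$ convergence of $\tilde c_p^k(1)$ from the previous step, Lemma~\ref{lemma:var_norm_continuity2}, which extends verbatim to signed weights by linearity of $\rho^k \mapsto \rho^k \cdot \mu_{c^k}$, gives $\mu_{\tilde c_p(1), \rho_0 + \deltarho_p} \to \mu_{\tilde c(1), \rho_0 + \deltarho}$ in $\V$, so the varifold data attachment term converges to its value at the limit.

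It remains to pass to the limit, via lower semicontinuity, in the two remaining terms. The Riemannian energy term is weakly lower semicontinuous on $H^1([0,1],H^n)$ exactly as in the proof of Theorem~5.2 of \cite{bruveris7completeness}. For the regularizer $F$ given by \eqref{eq:weight_reg_TV}, I would use the lower semicontinuity of the $TV$ norm under $L^1$ convergence, so that $\liminf_p \|\deltarho_p\|_{TV} \geq \|\deltarho\|_{TV}$, while $\tilde F_{\rho_0}(\tilde c_p(1), \deltarho_p) \to \tilde F_{\rho_0}(\tilde c(1), \deltarho)$ by the assumed continuity with respect to a.e.\ convergence of $\deltarho$ together with the $C^1$ convergence of the curves. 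Summing these estimates gives $E(\tilde c, \deltarho) \leq \liminf_p E(\tilde c_p, \deltarho_p) = \inf E$, so $(\tilde c, \deltarho)$ realizes the infimum. I expect the main obstacle to lie in reconciling the three distinct modes of convergence---weak $H^1$ for the path, uniform $C^1$ at each time for the curves, and $L^1$ together with a.e.\ for the weights---so that Lemma~\ref{lemma:var_norm_continuity2} applies and the regularizer is lower semicontinuous; in particular, securing the uniform $TV$ bound on $\deltarho_p$ hinges on $\tilde F$ being bounded below, which is the one structural property of $\tilde F$ that the argument truly requires.
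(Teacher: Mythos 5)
Your proposal is correct and follows essentially the same route as the paper's proof: the direct method of the calculus of variations, reusing the geometric compactness argument of Theorem~\ref{thm:existence_graph_match} for the paths, Lemma~\ref{lemma:compactness_BV} for compactness of the weight changes, Lemma~\ref{lemma:var_norm_continuity2} for convergence of the varifold term, and weak lower semicontinuity of the Riemannian energy together with $L^1$-lower semicontinuity of the $TV$ norm and the assumed continuity of $\tilde F$. Your two added precisions---that the uniform $TV$ bound on $\deltarho_p$ requires $\tilde F$ to be bounded below, and that Lemma~\ref{lemma:var_norm_continuity2} must be extended (trivially, by the same estimates) to signed weights since $\rho_0+\deltarho$ need not be nonnegative---are points the paper passes over silently, and both observations are accurate.
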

\begin{proof}
Let us denote by $E(c(\cdot),\deltarho)$ the energy of \eqref{eq:relaxed_weighted_shape_graph_match} and let $(\tilde{c}_m(\cdot),\deltarho_{m})$ be a minimizing sequence of $E$ with $\tilde{c}_m(\cdot) \in H^1([0,1],\GraphnA)$, with $\Adj \in \R^{2K\times 2K}$ being the adjacency matrix of $c_0$. With the same arguments as in the proof of Theorem \ref{thm:existence_graph_match}, we deduce that up to a subsequence, we have that $(\tilde{c}_m(\cdot))$ converges weakly to some $c(\cdot) \in H^1([0,1],\GraphnA)$, and that for all $k=1,\ldots,K$ and all $t\in [0,1]$, we have that $\tilde{c}^k_m(t,\cdot)$ converges to $c^k(t,\cdot)$ with respect to the $\|\cdot\|_{1,\infty}$ norm on $[0,1]$. Furthermore, $\|\deltarho_m\|_{TV}$ is uniformly bounded in $m$, so by Lemma \ref{lemma:compactness_BV}, up to the extraction of another subsequence, we can also assume that there exists $\deltarho \in (BV([0,1],\R))^K$ such that for all $k$, we have $\deltarho^k_{m} \rightarrow \deltarho^k$ in $L^1([0,1])$ and $\deltarho^k_m(\theta) \rightarrow \deltarho^k(\theta)$ for almost all $\theta \in [0,1]$. 

This last statement implies, thanks to the assumption on $F$, that $F_{\rho_0}(c(1),\deltarho_m) \rightarrow F_{\rho_0}(c(1), \deltarho)$ as $m\rightarrow +\infty$. In addition, by Lemma \ref{lemma:var_norm_continuity2}, we deduce that $\|\mu_{\tilde{c}_m(1),\rho_0+\deltarho_m} - \mu_{c_1,\rho_1}\|_{\V}^2 \rightarrow \|\mu_{c(1),\rho_0+\deltarho} - \mu_{c_1,\rho_1}\|_{\V}^2$. Also, given the weak lower semi-continuity of the Riemannian distance already noted above and the fact that $\|\cdot\|_{TV,[0,1]}$ is lower semicontinuous with respect to the $L^1$--convergence on $[0,1]$, we see that:
\begin{equation*}
 E(c(\cdot),\deltarho) \leq \liminf\limits_{m \rightarrow +\infty} E(\tilde{c}_m(\cdot),\deltarho_m),
\end{equation*}
and consequently, we obtain that $(c(\cdot),\deltarho)$ is a minimizer of \eqref{eq:relaxed_weighted_shape_graph_match}.
\end{proof}

\section{Optimization approach}
\label{sec:optim_algo}
In this section we propose a numerical optimization approach to solve the generalized weighted shape graph matching problem introduced in \eqref{eq:relaxed_weighted_shape_graph_match}. We will focus exclusively on shape graph matching using the class of second-order elastic Sobolev metrics. As a result, we denote the space of parametrized shape graphs of regularity $r=2$ by $\Graphtwo$, the second-order Sobolev metrics of order $n = r = 2$ on this space as $\bar{G}^2$, and the corresponding space of weighted shape graphs as $\WGraph^2$. Our approach to solve \eqref{eq:relaxed_weighted_shape_graph_match} consists of discretizing the relaxed matching energy, before using appropriate optimization algorithms to minimize the discretized energy. We provide an open source implementation of our approach on \href{https://github.com/charoncode/ShapeGraph_H2match}{github}. 

\subsection{Discretizing the energy}
\label{ssec:discretizing_energy}
We first recall that the energy for the matching problem is given by:
\begin{equation*}
    E(c(\cdot), \deltarho) := \int_0^1 \bar{G}_{c(t)}^{2}(\partial_t{c}(t),\partial_t{c}(t)) dt  + \alpha F_{\rho_0}(c(1), \deltarho) + \lambda \|\mu_{c(1),\rho_0+\deltarho} - \mu_{c_1,\rho_1}\|_{\V}^2.
\end{equation*}
Thus, we see that the energy is a weighted sum of three terms, namely the Riemannian energy of the path of shape graphs satisfying the initial condition $c(0) = c_0$, the weight regularizer for the weight changes defined on the transformed source, and the varifold distance between the transformed source and target. In what follows, we describe how to discretize and compute each of these three terms separately.

\subsubsection{Riemannian energy of the path}
\label{ssec:riemannian_energy_path_computation}
To evaluate the Riemannian energy in the matching functional, we need to discretize the paths of shape graphs $c(\cdot) \in H^1([0,1], \GraphtwoA)$ satisfying the initial condition $c(0) = c_0$, and then find an expression for the energy in terms of these discretized paths.

We first note that all shape graphs in the path have the same topology as the source shape graph $c_0 = \prod_{k=1}^K c_0^k$, i.e., they consist of $K$ component curves whose connectivity is described by the adjacency matrix $\mathcal{A}$. Therefore, we construct a discretized path of immersions for each component curve, which we denote by $c^k(\cdot) \in H^1([0,1], \Imm^2([0,1], \R^d))$ for $k = 1,...,K$. We use the same procedure as defined in~\cite[Section 3]{bauer2017numerical} to construct these discretized paths of immersions, i.e., we discretize each $c^k(\cdot)$ using tensor product B-splines on knot sequences of orders $n_t$ in time and $n_{\theta}$ in space. Typically, we choose $n_t = 1$ and $n_{\theta} = 2$ in our numerical experiments. This produces $N_t \times N_{\theta}$ basis splines, with $N_t$ and $N_{\theta}$ being the number of control points in time and space respectively. Typical values used in experiments are $N_t = 10$ and $N_{\theta} = \mathcal{O}(10^2)$. We can thus write the discretized path of immersions for each component curve as follows:
\begin{equation*}
    c^k(t, \theta) = \sum_{i=1}^{N_t} \sum_{j=1}^{N_{\theta}} c_{i,j}^k B_i(t) C_j(\theta), \qquad (t, \theta) \in [0,1] \times [0,1]
\end{equation*}
where $c_{i,j}^k \in \R^d$ are the control points for $c^k(\cdot)$, and $B_i(t)$ and $C_j(\theta)$ are B-splines defined by an equidistant simple knot sequence on $[0, 1]$ with full multiplicity at the boundary knots. The full multiplicity of boundary knots in $t$ implies that the discretized initial curve $c^k(0)$ for each $k=1,...,K$ depends only on the control points $c_{1,j}^k$. This allows us to easily  enforce the initial constraint by keeping these control points fixed. We point out that if the source shape graph $c_0 = \prod_{k=1}^K c_0^k$ with adjacency matrix $\mathcal{A}$ is not already split into its individual component curves, this can be done using e.g., Tarjan's algorithm \cite{tarjan1972depth}.

Note that we can analytically differentiate our spline paths in both time and space. Thus we get an explicit expression for the Riemannian energy of each path of immersions by the formulas in Section~\ref{ssec:shape_graphs}. 
Since we are using a B-spline discretization for the paths, we evaluate the integrals in the resulting expression using Gaussian quadrature with quadrature sites placed between knots where the curves are smooth, c.f.~\cite{bauer2019relaxed}. This gives us a numerical approximation of the energy for the paths of immersions corresponding to each component curve of the shape graph, which we then sum up to obtain the Riemannian energy of the full path of shape graphs, as the linearity of integrals combined with the formula for the Riemannian metric on $\GraphtwoA$ in \eqref{eq:def_bar_G} allows us to write:
\begin{align*}
    \int_0^1 \bar{G}_{c(t)}^{2}(\partial_t{c}(t),\partial_t{c}(t)) dt = \sum_{k=1}^K \left( \int_0^1 G_{c^k(t)}^2(\partial_t{c}^k(t), \partial_t{c}^k(t)) dt \right).
\end{align*}
This results in a fast and robust way to evaluate the Riemannian energy of the path of shape graphs satisfying $c(0) = c_0$. The same is true for the evaluation of the derivatives of this energy. The formulas for the derivatives for each component curve can be found in~\cite[Appendix A]{bauer2019relaxed}.

\subsubsection{Weight regularizer}
\label{ssec:weight_regularizer_computation}
We next discuss the discretization and computation of the weight regularizer for the weight change function $\delta\hspace{-.1em} \rho \in (BV([0,1], \R))^K$ defined on the transformed source. 

First, we express the discretized transformed source in a convenient format for discretizing the weight function defined on it. This is done by evaluating each spline component $c^k(1)$ of the transformed source at the uniform samples $\theta_i= \frac{i}{N_k}$, leading to an ordered list of vertices $\{v_0^k, v_1^k,\ldots,v_{N_k}^k\}$, together with corresponding oriented edge vectors $e_i^k = v_{i+1}^{k} - v_{i}^{k}$ and edge centers $x_i^k = (v_{i}^k+v_{i+1}^k)/2$. This allows us to discretize the transformed weight function $\rho^k := \rho^k_0 + \deltarho^k \in BV([0,1],\R)$ as weights defined over the edges of the discretized component curve $c^k(1)$, which we represent by a list $\rho^k_i := \rho^k_{0,i} + \deltarho^k_i$, for $i=0,\ldots,N_k-1$ for each $k=1,\ldots,K$. We thus have a simpler representation for the discretized transformed source as a weighted piecewise linear (i.e. polygonal) curve given by the full list of edge vectors $e_1,e_2,\ldots,e_N$, edge centers $x_1,x_2,\ldots,x_N$, and edge weights $\rho_1,\rho_2,\ldots,\rho_N$, where $N=N_1 N_2\ldots N_K$.

Next, we see that the weight regularizer $F_{\rho_0}(c(1), \deltarho)=\|\delta\rho\|_{TV} + \tilde \beta \tilde F_{\rho_0}(c(1),\deltarho)$ consists of the $TV$ norm of the weight change function, plus a flexible choice for the additional term $\tilde{F}_{\rho_0}$. To compute the $TV$ norm, we use the aforementioned discretization of the transformed source as a weighted polygonal curve, and write:
\begin{align*}
    \| \deltarho \|_{TV} = \sum_{k=1}^K \| \deltarho^k \|_{TV,[0,1]} \approx \sum_{k=1}^K \sum_{i=1}^{N_k-1} | \deltarho_{i}^k - \deltarho_{i-1}^k | = \sum_{k=1}^K \| D^k \deltarho^k \|_1,
\end{align*} 
where for each $k=1,\ldots,K$, we denote the discretized weight change function on each component with a slight abuse of notation by $ \deltarho^k = (\deltarho_0^k,\ldots, \deltarho_{N_k-1}^k) \in \R^{N_k}$, with $D^k \in \R^{(N_k-1) \times N_k}$ being the appropriate difference operator such that $\| D^k \deltarho^k \|_1 = \sum_{i=1}^{N_k-1} | \deltarho_{i}^k -  \deltarho_{i-1}^k |$, namely:
\begin{align*}
    D^k =
    \begin{bmatrix}
    -1 & 1 & 0 & \hdots & 0 & 0 \\
    0 & -1 & 1 & \hdots & 0 & 0 \\
     \vdots & \vdots & \vdots  & \ddots & \vdots & \vdots \\
    0 & 0 & 0 & \hdots & -1 & 1
    \end{bmatrix}_{(N_k-1) \times N_k}.
\end{align*}
In what follows, it will be more convenient to express the numerical approximation for the $TV$ norm of the weight change function as follows:
\begin{equation}\label{eq:tv_norm_formula_l1}
    \| \deltarho \|_{TV} \approx \| D \deltarho \|_1,
\end{equation}
where $\deltarho = (\deltarho^1,\ldots,\deltarho^K) \in \R^N$ represents the discretized weight change function on the full transformed source shape graph, and $D = \operatorname{diag}(D^1,\ldots,D^K) \in \R^{(N-K) \times N}$ is a non-square block diagonal matrix of the difference operators. We note that since the $TV$ norm is non-differentiable with respect to $\deltarho$, we will require carefully selected optimization techniques to solve~\eqref{eq:relaxed_weighted_shape_graph_match}, as will be outlined in Section~\ref{ssec:minimization_shape_graph_matching}.

Meanwhile, as mentioned in Section \ref{ssec:weigh_regularizer}, the flexible additional term $\tilde{F}_{\rho_0}$ can be chosen to impose further constraints on the weight change function, such as nonnegativity constraints, or constraints that binarize the value of $\rho := \rho_0 + \deltarho$. As long as $\deltarho \mapsto \tilde{F}_{\rho_0}(\cdot, \deltarho)$ is continuous, the existence result from Theorem \ref{thm:existence_weighted_graph_match} holds. In our numerical experiments, we will focus on the following particular choice for $\tilde{F}_{\rho_0}$, which is inspired from a similar penalty used in the context of clustering in \cite{wang2020efficient}:
\begin{equation*}
    \tilde{F}_{\rho_0}(c, \deltarho):= \sum_{k=1}^K \int_0^1 8(\rho^k(\theta)(\rho^k(\theta) - 1))^2 |\partial_{\theta} c^{k}(\theta)| d\theta.
\end{equation*}
We call this mapping $\tilde{F}_{\rho_0}$ the \textit{$\{0,1\}$-penalty}, as it is minimized precisely when a weight function $\rho \in (BV([0,1],\R)^K$ defined on a shape graph $c \in \Graphn$ takes on pointwise values of either 0 or 1. We focus on the $\{0,1\}$-penalty as it is particularly well-suited for comparing weighted shape graphs with different topologies or for shape registration problems with partial matching constraints, as we will illustrate in Section \ref{sec:numerical_results}.

Given the discretized weighted transformed source $(c(1), \rho)$, which is represented by its list of oriented edge vectors $e_1, e_2, \ldots, e_N$, edge centers $x_1, x_2, \ldots, x_N$ and edge weights $\rho_1, \rho_2, \ldots, \rho_N$ with $\rho_i = \rho_{0,i} + \delta\hspace{-.1em} \rho_i$ for $i = 1,\ldots,N$, one can approximate the $\{0,1\}$-penalty numerically as follows:
\begin{equation}\label{penalty_01}
    \tilde{F}_{\rho_0}(c(1), \deltarho) \approx \sum_{i=1}^N 8 (\rho_i (\rho_i - 1))^2 |e_i|.
\end{equation}
From the expression above, we see that the $\{0,1\}$-penalty is differentiable with respect to the edges $e_i^k$, and by the chain rule, with respect to the vertices $v_i^k$ of the discretized transformed source for each $k=1,\ldots,K$. By applying a second chain rule, one obtains the differentiability and the explicit expression of the derivatives of the $\{0,1\}$-penalty with respect to the final spline control points $c_{N_t,j}^k$, similar to what is done in \cite{bauer2019relaxed} and in the library \cite{h2metricsGIT}. Moreover, since the $\{0,1\}$-penalty is a quartic function of the edge weights $\rho_i := \rho_{0,i} + \deltarho_i$, its derivative with respect to each $\deltarho_i$ is also easy to compute. In our experiments, we control the high growth of the penalty's derivatives with respect to the weight changes $\deltarho_i$ outside the interval $[0,1]$ by using a clipped version of the penalty, which has piecewise linear segments outside the interval $(-\epsilon, 1+\epsilon)$ for some fixed value of $\epsilon > 0$, see our \href{https://github.com/charoncode/ShapeGraph_H2match}{source code} for details.

\subsubsection{Varifold norm}
\label{ssec:varifold_norm_computation}
Lastly, we address the discretization and numerical computation of the varifold norm $\| \cdot \|_{\V}$ that defines the discrepancy term between the transformed source and target. As explained above, we can represent the discretized transformed source $(c(1),\rho)$, where $\rho = \rho_0 + \deltarho$, by the lists of edge vectors $e_1,e_2,\ldots,e_N$, edge centers $x_1,x_2,\ldots,x_N$ and edge weights $\rho_1,\rho_2,\ldots,\rho_N$. Similarly, the discretized target $(c_1,\rho_1)$ will be represented by its list of edge vectors $\tilde{e}_1,\tilde{e}_2,\ldots,e_{\tilde{N}}$, edge centers $\tilde{x}_1,\tilde{x}_2,\ldots,\tilde{x}_{\tilde{N}}$ and edge weights $\tilde{\rho}_1,\tilde{\rho}_2,\ldots,\tilde{\rho}_{\tilde{N}}$. This allows us to write the following approximation for $\langle \mu_{c(1),\rho_0 + \deltarho} , \mu_{c_1,\rho_1} \rangle_{\V}$: 
\begin{equation}
\label{eq:inner_prod_var_curve_discrete}
    \langle \mu_{c(1),\rho_0 + \deltarho} , \mu_{c_1,\rho_1} \rangle_{\V} \approx \sum_{i=1}^{N} \sum_{j=1}^{\tilde{N}}  \mathcal{K}\left(x_i,\frac{e_i}{|e_i|},\tilde{x}_j,\frac{\tilde{e}_j}{|\tilde{e}_j|}\right) \rho_i \tilde{\rho}_j |e_i| |\tilde{e}_j|,
\end{equation}
and obtain a corresponding finite approximation of the squared varifold distance $\|\mu_{c(1),\rho_0+\delta\rho} - \mu_{c_1,\rho_1}\|_{\V}^2$. Note that \eqref{eq:inner_prod_var_curve_discrete} essentially amounts in approximating the varifold inner product between edge $e_i$ in the transformed source $c(1)$ and edge $e_j$ in the target $c_1$ by a single evaluation of the kernel $\mathcal{K}$ at the center of those respective edges. We point to the references \cite{kaltenmark2017general,charon2020fidelity,hsieh2021metrics} for more details on this discretization scheme and the resulting approximation bounds that one is able to recover.  

Regarding the choice of kernel $\mathcal{K}$ on $\R^d \times S^{d-1}$, a natural class that has been commonly used in the aforementioned papers are the positive definite separable radial kernels which take the form $\mathcal{K}(x,u,y,v) = \Psi(|x-y|^2) \Phi(u \cdot v)$, where $\Psi$ and $\Phi$ are two $C^1$ functions on $\R_{+}$ and $[-1,1]$ respectively. These kernels do satisfy the assumptions required for the existence of solutions in Theorem \ref{thm:existence_weighted_graph_match}, while also leading to translation and rotation invariant distances between shape graphs. In our experiments, we choose $\Psi(t) = e^{-\frac{t}{\sigma^2}}$, i.e., a Gaussian kernel of width $\sigma>0$.
The scale parameter $\sigma$ is typically selected depending on the spatial size of the shape graphs to be matched. As for the kernel on $S^{d-1}$, we typically choose either $\Phi(u\cdot v) = (u \cdot v)^2$, which leads to distances independent of the orientation of the shape graph components, or alternatively $\Phi(u\cdot v) = e^{-\frac{2}{\tau^2}(1-u\cdot v)}$, in which case the varifold metric does take orientation into account. A more thorough discussion on the properties of such kernel metrics can be found in \cite{kaltenmark2017general} and \cite{charon2020fidelity}.   

With this regularity on the kernel $\mathcal{K}$, the discrete varifold inner product in \eqref{eq:inner_prod_var_curve_discrete}, and by extension, the data attachment term $\|\mu_{c(1),\rho_0+\delta\rho} - \mu_{c_1,\rho_1}\|_{\V}^2$ are clearly differentiable with respect to the $x_i$'s and $e_i$'s, and by a simple chain rule, with respect to the vertices $v_i^k$ of the discretized transformed source. Then by applying a second chain rule, one obtains the differentiability and the explicit expression of the derivatives of the varifold data attachment term with respect to the final spline control points $c_{N_t,j}^k$, similarly to what is done in \cite{bauer2019relaxed} and in the library \cite{h2metricsGIT}. Furthermore, since the varifold norm is a quadratic function of the edge weights $\rho_i := \rho_{0,i} + \deltarho_i$, its derivative with respect to each $\deltarho_i$ is straightforward to compute.

\subsection{Minimizing the energy}
\label{ssec:minimization_shape_graph_matching}
With the discretization introduced in Section \ref{ssec:discretizing_energy}, the generalized weighted shape graph matching problem becomes a standard finite dimensional optimization problem over the spline control points of the discretized path of shape graphs $c(\cdot) \in \R^{N_t \times N_{\theta}}$, and the changes in edge weights $\deltarho \in \R^N$ defined on the transformed source. The main technicality arises from the presence of the non-smooth $TV$ norm of $\deltarho$ as a regularizer, which prevents us from directly applying standard gradient-descent based optimization techniques. To tackle this issue, we propose an approach which broadly speaking, involves iteratively minimizing a smoothed version of the matching energy, which we obtain by carefully rewriting the $TV$ norm and introducing appropriate auxiliary variables. This is adapted from the smoothed fast iterative shrinkage-thresholding (SFISTA) algorithm proposed in \cite{tan2014smoothing} for convex non-smooth minimization. For the problem we consider here, the advantage of this approach over alternative splitting methods for $TV$ norm minimization, such as the split Bregman algorithm, is that it will allow us to tackle the minimization over both control points and weight changes as a series of unconstrained smooth optimization problems that can be solved efficiently using a limited memory BFGS procedure. We detail our full approach below.

First, by using the expression for the $TV$ norm introduced in \eqref{eq:tv_norm_formula_l1}, namely $\|\deltarho\|_{TV} = \|D \deltarho\|_{1}$, and by introducing the auxiliary variable $\eta = D \deltarho \in \R^{N-K}$, we can rewrite \eqref{eq:relaxed_weighted_shape_graph_match} as a constrained minimization problem:
\begin{align*}
    \inf_{c(\cdot), \deltarho, \eta} \int_0^1 \bar{G}_{c(t)}^{2}(\partial_t{c}(t),\partial_t{c}(t)) dt  + \lambda \|\mu_{c(1),\rho_0+\deltarho} - \mu_{c_1,\rho_1}\|_{\V}^2 + \beta \tilde F_{\rho_0}(c(1), \deltarho) + \alpha \| \eta \|_{1} 
\end{align*}
such that  $\eta = D \deltarho$. By relaxing this constrained problem  using a Lagrange multiplier $\gamma \in \R$, we obtain the  unconstrained problem:
\begin{multline}
    \label{eq:unconstrained_relaxed_matching_energy}
    \inf_{c(\cdot), \deltarho, \eta} \int_0^1 \bar{G}_{c(t)}^{2}(\partial_t{c}(t),\partial_t{c}(t)) dt  + \lambda \|\mu_{c(1),\rho_0+\delta\rho} - \mu_{c_1,\rho_1}\|_{\V}^2 \\+ \beta \tilde F_{\rho_0}(c(1),\deltarho) + \alpha \| \eta \|_{1} + \frac{\gamma}{2} \|\eta - D \deltarho \|_2^2.
\end{multline}
We denote the energy above as $E_{\gamma}(c(\cdot), \deltarho, \eta)$, and note that as $\gamma \rightarrow +\infty$, the unconstrained minimization of $E_{\gamma}(c(\cdot), \deltarho, \eta)$ becomes equivalent to its constrained counterpart. The minimization for \eqref{eq:unconstrained_relaxed_matching_energy} now involves an additional variable $\eta$, but fortunately, the objective function is strictly convex in $\eta$. For fixed $c(\cdot)$ and $\deltarho$, the unique global minimizer $\eta^*$ for $\eqref{eq:unconstrained_relaxed_matching_energy}$ is obtained from the proximal operator of $\|\cdot\|_1$ as:
\begin{align*}
    \eta^*
    &=  \underset{\eta}{\argmin} \hspace{.5em} E_{\gamma}(c(\cdot), \deltarho, \eta) = \underset{\eta}{\argmin} \hspace{.5em} \| \eta \|_{1} + \frac{1}{2 \alpha / \gamma} \|\eta - D \deltarho \|_2^2
    = \operatorname{prox}_{\frac{\alpha}{\gamma} \| \cdot \|_1} (D \deltarho) .
\end{align*}
Specifically, $\eta^* = (\eta_1^*,\ldots,\eta_{N-K}^*)$ can be expressed element-wise as follows:
\begin{equation}
    \label{eqn:shrink_operator}
    \eta_i^* = 
    \begin{cases}
    (D\deltarho)_i - \frac{\alpha}{\gamma} &\mbox{if } (D\deltarho)_i > \frac{\alpha}{\gamma} , \\
    (D\deltarho)_i + \frac{\alpha}{\gamma} &\mbox{if } (D\deltarho)_i < -\frac{\alpha}{\gamma} , \\
    0 &\mbox{otherwise.}
    \end{cases}
\end{equation}
By plugging this explicit expression for $\eta^*$ back into \eqref{eq:unconstrained_relaxed_matching_energy}, we recover a minimization problem over $c(\cdot)$ and $\deltarho$ only, namely:
\begin{equation}
    \label{eq:unconstrained_relaxed_energy_match_explicit}
    \inf_{c(\cdot), \deltarho} \int_0^1 \bar{G}_{c(t)}^{2}(\partial_t{c}(t),\partial_t{c}(t)) dt  + \lambda \|\mu_{c(1),\rho_0+\deltarho} - \mu_{c_1,\rho_1}\|_{\V}^2 + \beta \tilde F_{\rho_0}(c(1),\deltarho) + \sum_{i=1}^{N-K} H_{\alpha, \gamma}((D \deltarho)_i),
\end{equation}
where
$H_{\alpha, \gamma}: \R \rightarrow \R$ is the so-called Huber function, which is defined as:
\begin{equation}
    \label{eq:huber_function}
    v \longmapsto H_{\alpha, \gamma}(v) := 
    \begin{cases}
    \frac{\gamma}{2} v^2 &\mbox{if } |v| \leq \alpha / \gamma, \\
    \alpha \Big[ |v| - \frac{\alpha}{2 \gamma} \Big] &\mbox{if } |v| > \alpha / \gamma.
    \end{cases}
\end{equation}
We notice that \eqref{eq:unconstrained_relaxed_energy_match_explicit} corresponds to the minimization of a continuously differentiable function since one can check the continuity of the Huber function and its derivative at the points $v = \pm \alpha / \gamma$.

As a result, for a given $\gamma > 0$, minimizing the energy in~\eqref{eq:unconstrained_relaxed_energy_match_explicit} can be done using standard gradient-descent based methods. In our experiments, we use an L-BFGS procedure, whose MATLAB implementation is available via the HANSO library\footnote{\url{https://cs.nyu.edu/overton/software/hanso/}}. Yet, the smoothed problem~\eqref{eq:unconstrained_relaxed_energy_match_explicit} only coincides with~\eqref{eq:relaxed_weighted_shape_graph_match} as $\gamma \rightarrow + \infty$. Therefore, to obtain a solution for our original problem, we solve the smoothed problem repeatedly with warm start initialization and values of $\gamma$ that are incrementally increased. The precise analysis of the validity and convergence of this sequential approach can be found in \cite{tan2014smoothing}, albeit for convex functions.  Moreover, we point out that the most computationally expensive operation at each iteration of the optimization process involves evaluating the varifold discrepancy term, which requires $\mathcal{O}(N \Tilde{N})$ kernel evaluations as described in~\eqref{eq:inner_prod_var_curve_discrete}.

\section{Numerical results}
\label{sec:numerical_results}
We now present results from several numerical experiments which demonstrate how the generalized weighted shape graph elastic matching framework can be useful in the context of partially observed and topologically varying data. Note that all of the following experiments were performed by numerically solving~\eqref{eq:relaxed_weighted_shape_graph_match} using a MATLAB implementation of the optimization approach outlined in Section~\ref{sec:optim_algo}. In our experiments, the optimization procedure is initialized by default with a constant path equal to the source shape graph. This is an added benefit of our variational formulation, as it gives us a more principled initialization compared to other gradient descent-based graph matching algorithms, where random initializations are usually employed. Our source code is publicly available on Github\footnote{\url{https://github.com/charoncode/ShapeGraph_H2match}}.

We point out that in order to solve problem~\eqref{eq:relaxed_weighted_shape_graph_match}, one needs to select a range of parameter values, such as the balancing parameters in the smoothed matching energy from~\eqref{eq:unconstrained_relaxed_energy_match_explicit}, namely $\lambda$ and $\beta$, the coefficients of the Riemannian metric for the energy of the path of shape graphs, parameters for the varifold data attachment term, and the Huber function parameters $\alpha$ and $\gamma$. While the latter is chosen according to the suggested scheme in \cite{tan2014smoothing}, the selection of the other parameters is highly data dependent as it typically depends on the size of the considered shape graphs, or the relative amount of expected geometric deformation versus weight change. Our current approach is to select parameter values by grid search combined with sequential parameter refinement, and we leave it as future work to devise a more principled or data-driven scheme for parameter selection. 
In all our experiments we scale the shape graphs to unit diameter and use the constant coefficient Sobolev metric from~\eqref{eq:def_Sobolev_metric} with coefficients $a_0 = 0.1, a_1 = 1, a_2 = 10^{-5}$ rather than the scale-invariant counterpart since this typically leads to faster numerics and, from our experience, very similar results when there are no major scale differences between the two shapes.

\begin{figure}[h!]
\hspace{-1cm}
\begin{center}
\begin{tabular}{ccccc}
\includegraphics[angle=90,trim = 60mm 15mm 60mm 10mm ,clip,width=2.3cm]{./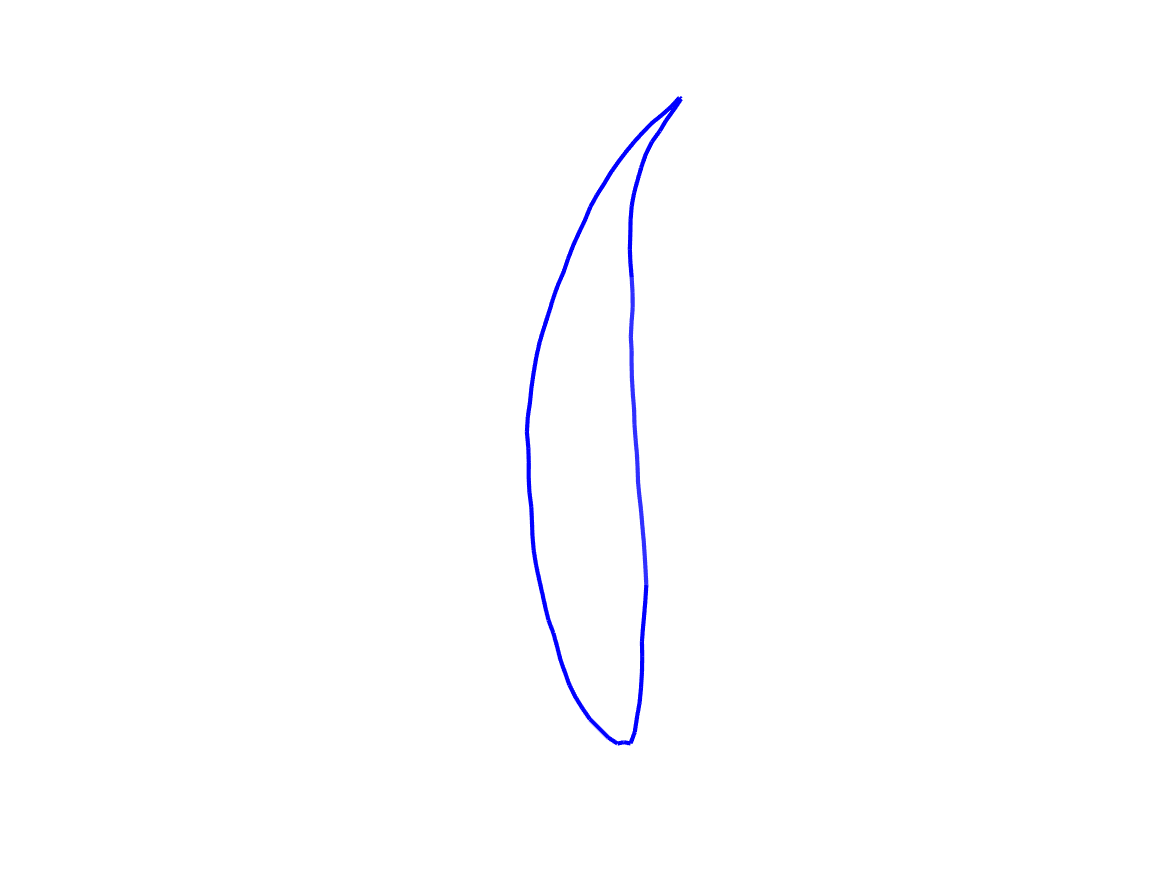}&
\includegraphics[angle=90,trim = 60mm 15mm 60mm 10mm ,clip,width=2.3cm]{./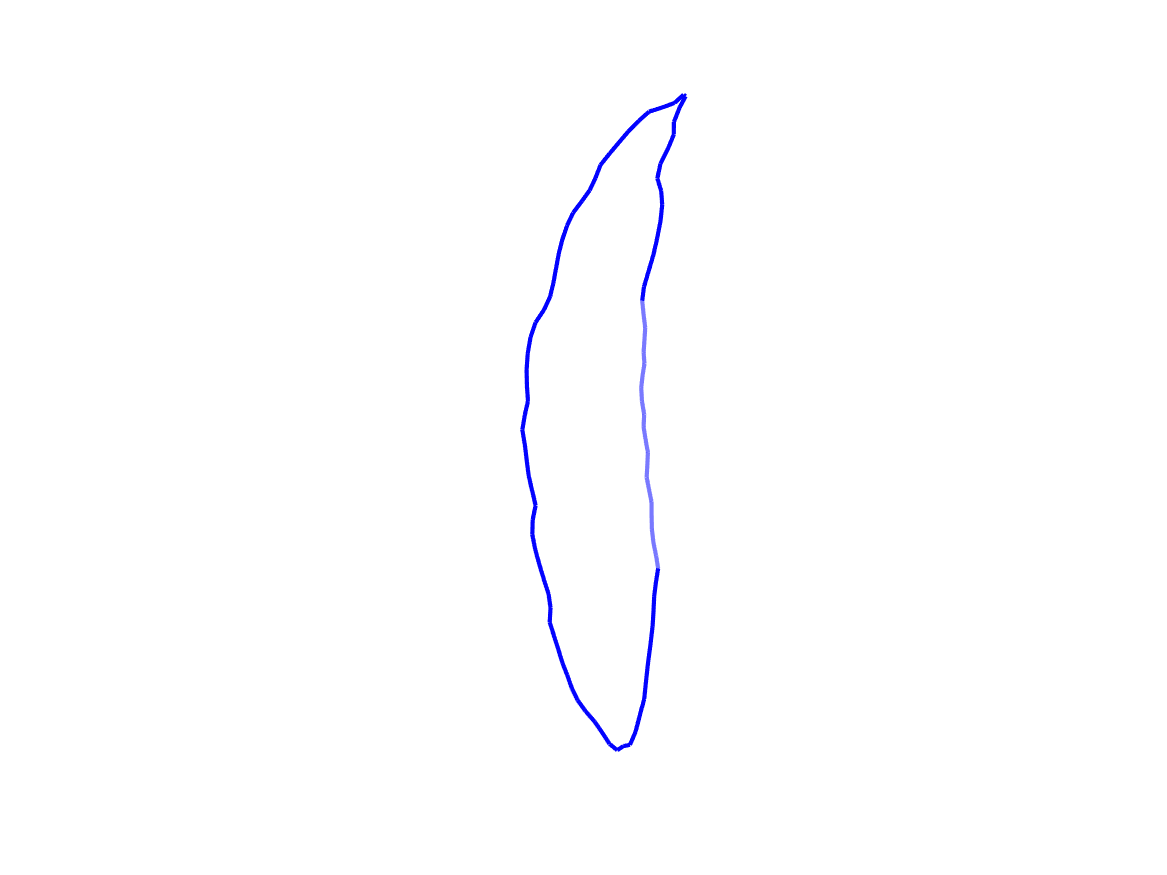}&
\includegraphics[angle=90,trim = 60mm 15mm 60mm 10mm ,clip,width=2.3cm]{./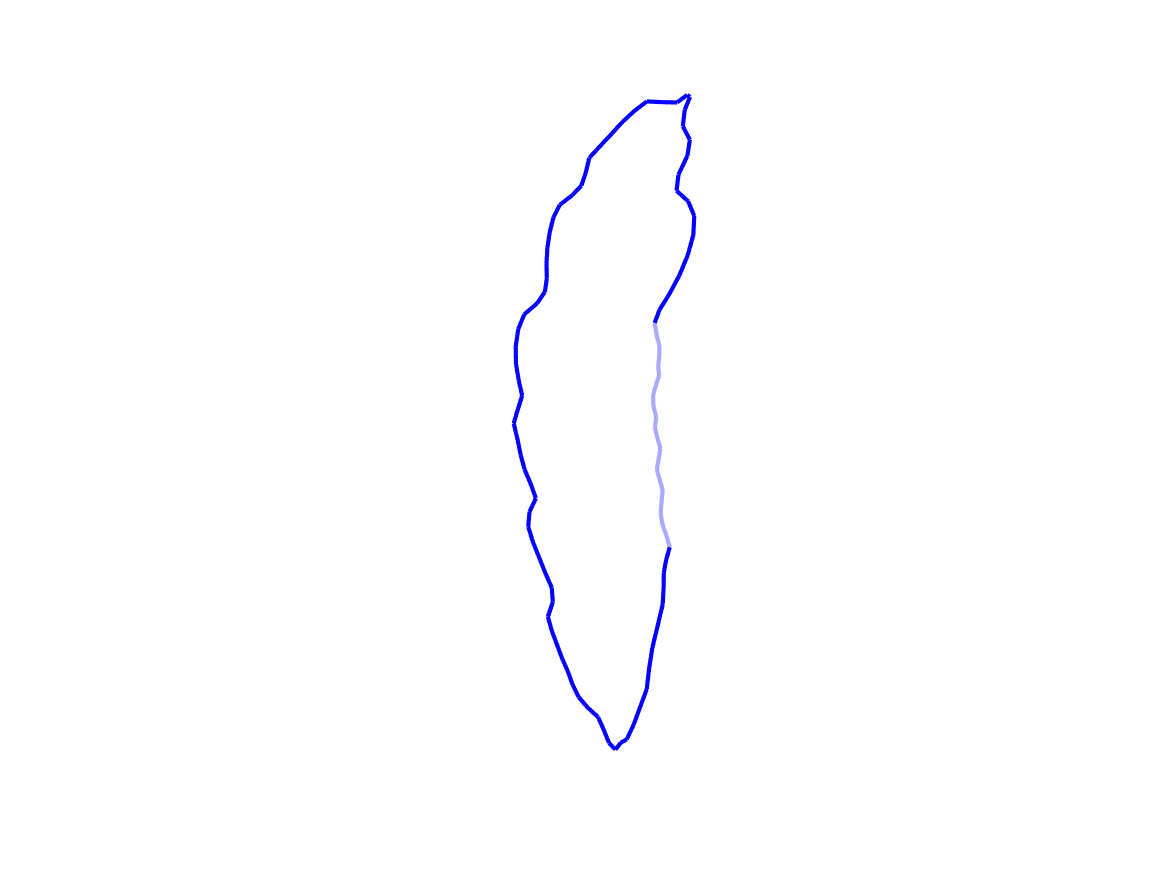}&
\includegraphics[angle=90,trim = 60mm 15mm 60mm 10mm ,clip,width=2.3cm]{./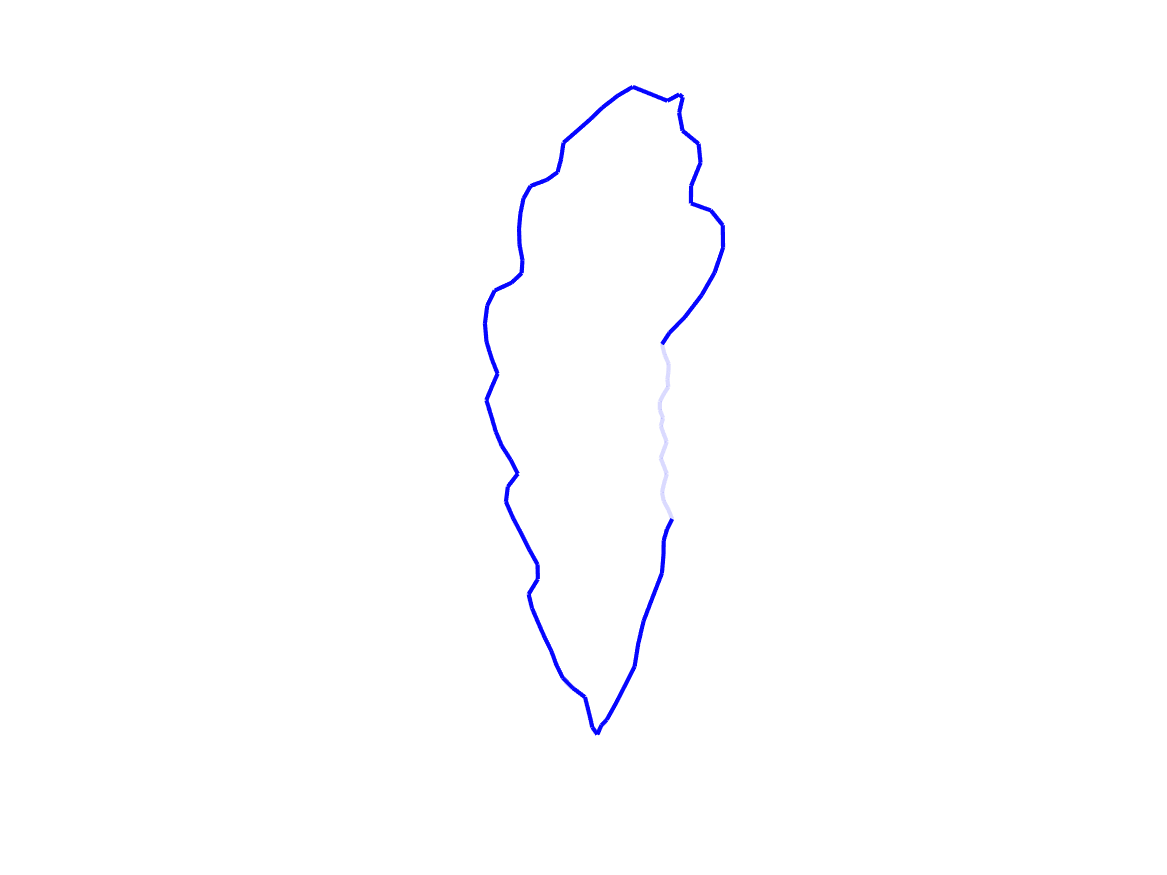}&
\includegraphics[angle=90,trim = 60mm 15mm 60mm 10mmm ,clip,width=2.3cm]{./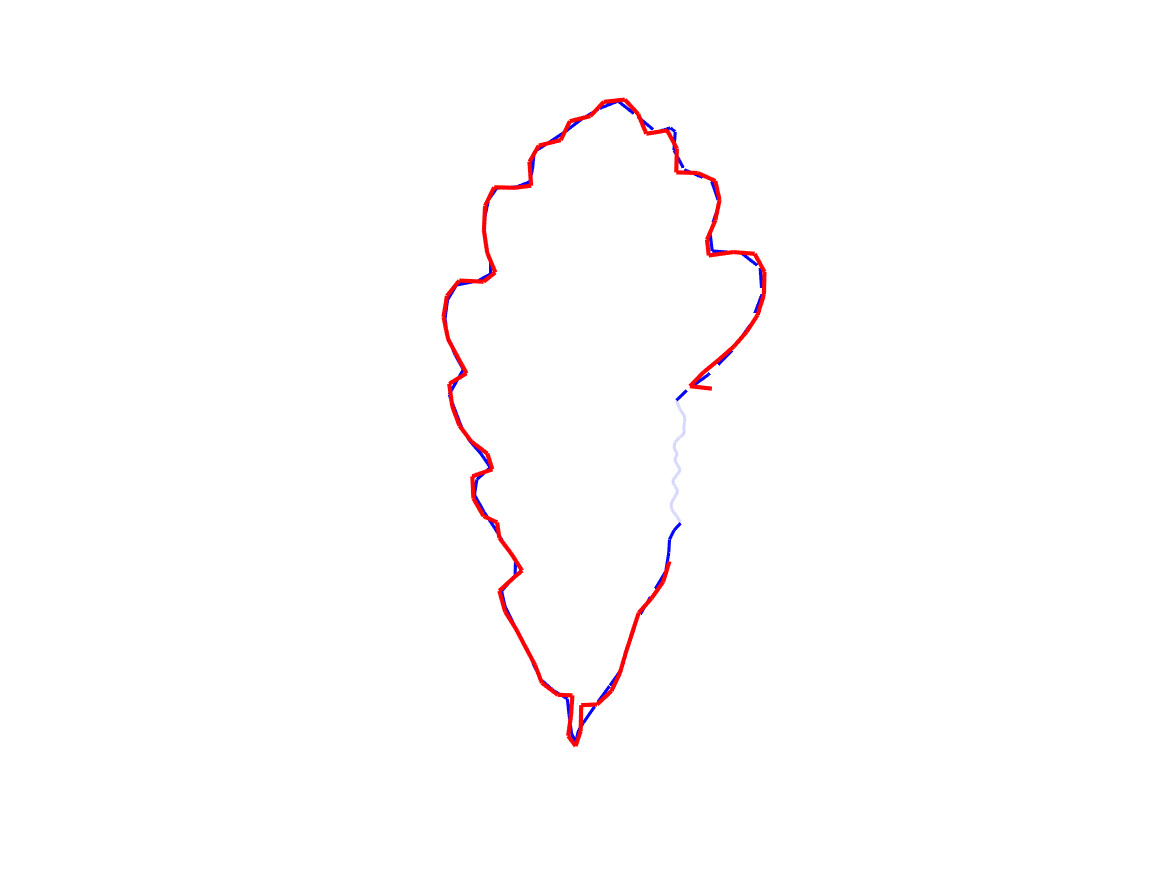}\\
\includegraphics[angle=90,trim = 50mm 15mm 50mm 10mm ,clip,width=2.3cm]{./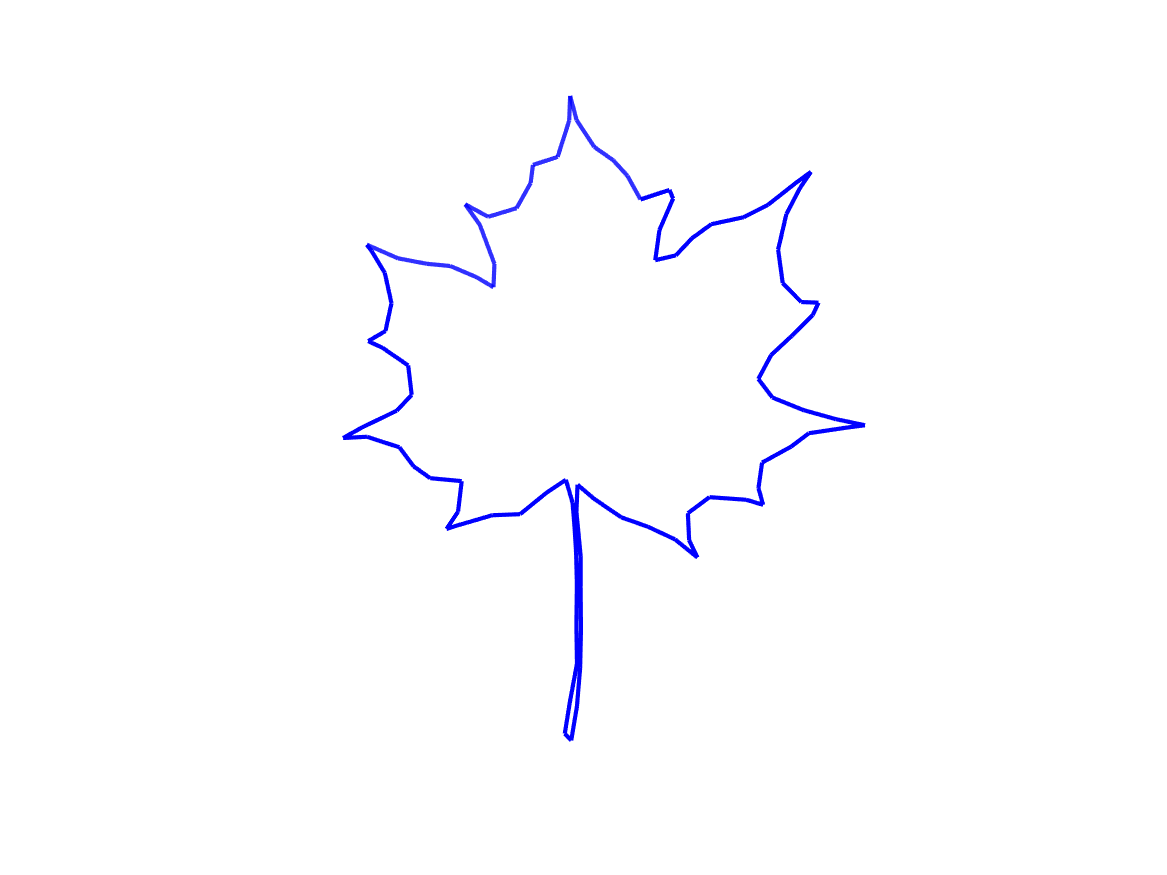}&
\includegraphics[angle=90,trim = 50mm 15mm 50mm 10mm ,clip,width=2.3cm]{./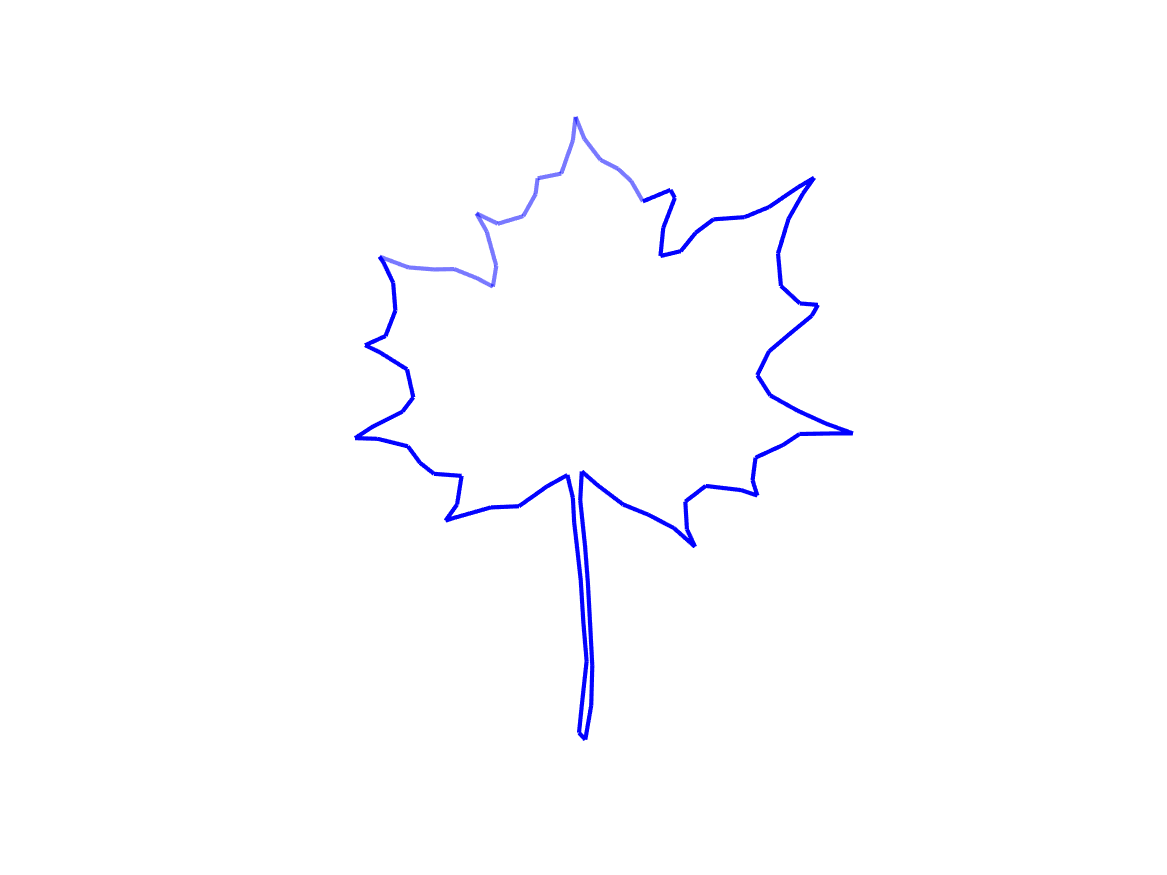}&
\includegraphics[angle=90,trim = 50mm 15mm 50mm 10mm ,clip,width=2.3cm]{./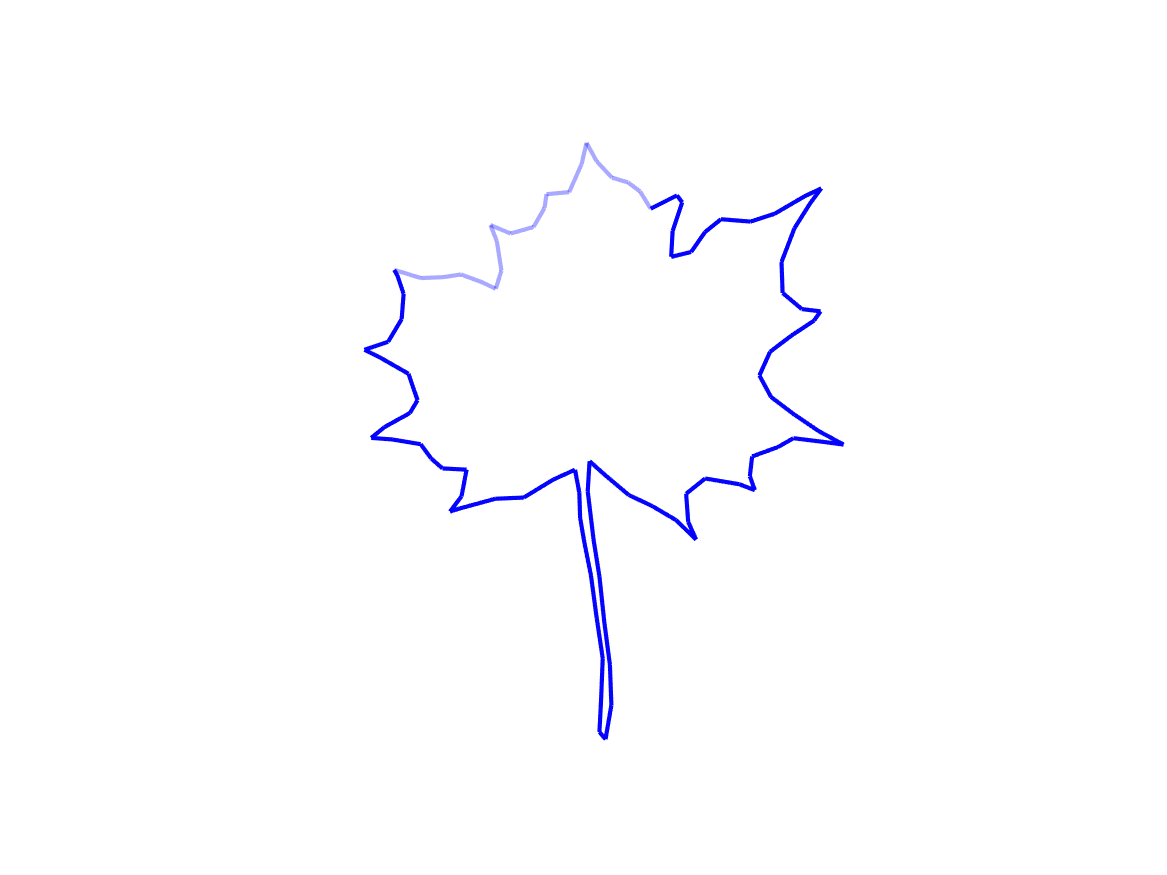}&
\includegraphics[angle=90,trim = 50mm 15mm 50mm 10mm ,clip,width=2.3cm]{./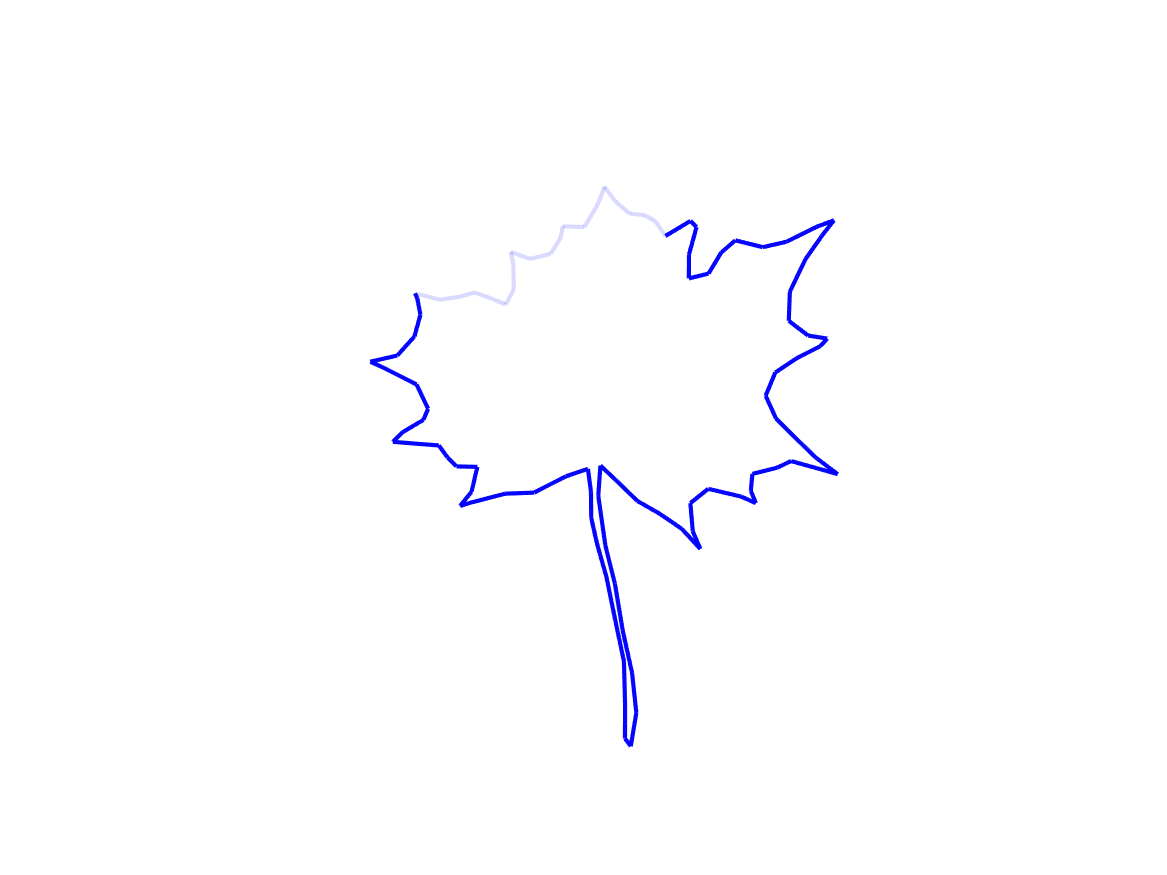}&
\includegraphics[angle=90,trim = 50mm 15mm 50mm 10mm ,clip,width=2.3cm]{./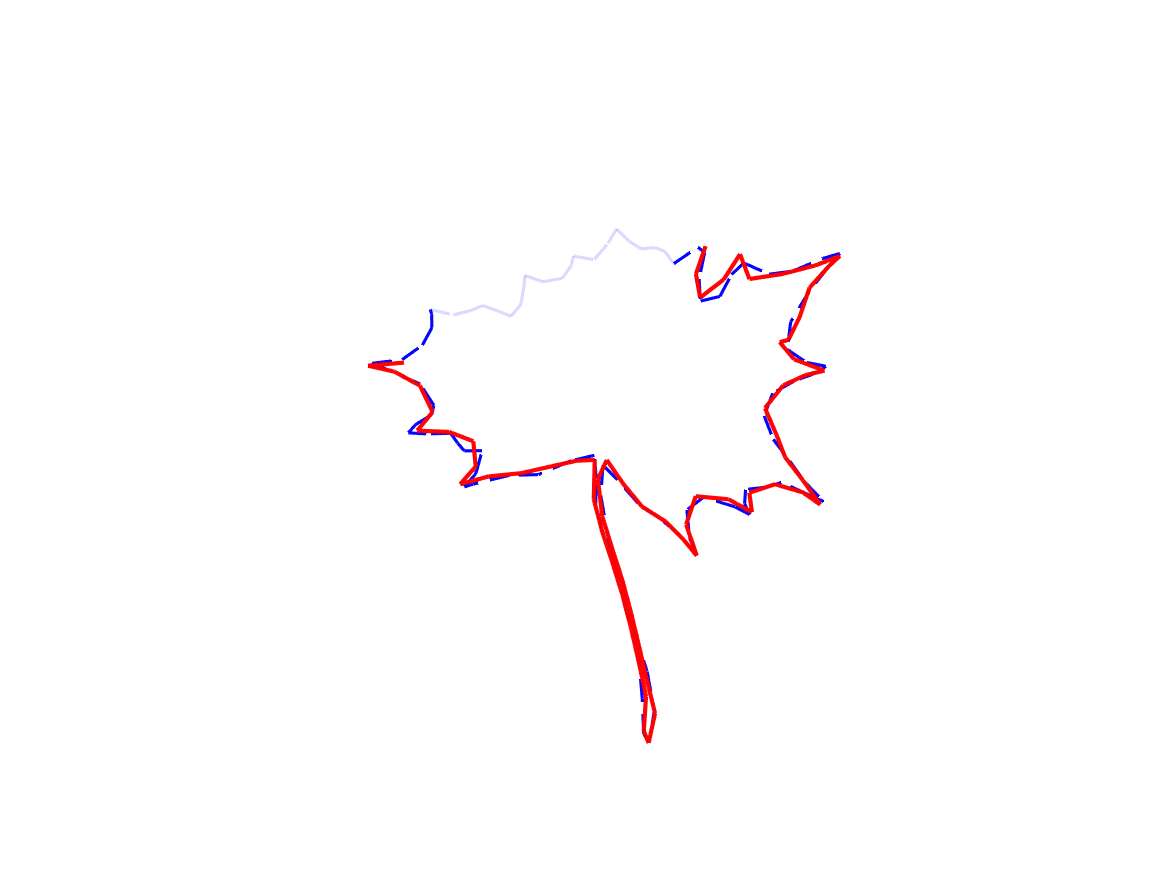}\\
\includegraphics[angle=90,trim = 60mm 15mm 60mm 10mm ,clip,width=2.3cm]{./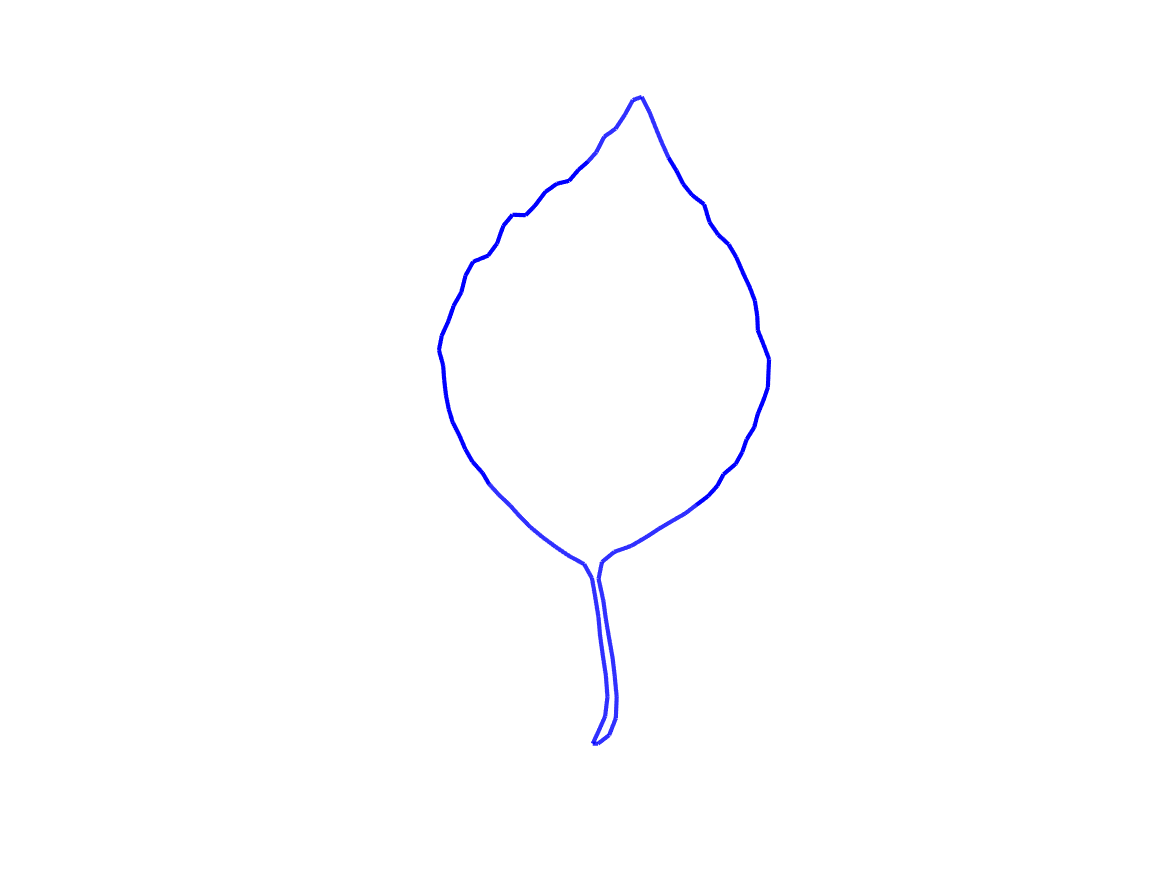}&
\includegraphics[angle=90,trim = 60mm 15mm 60mm 10mm ,clip,width=2.3cm]{./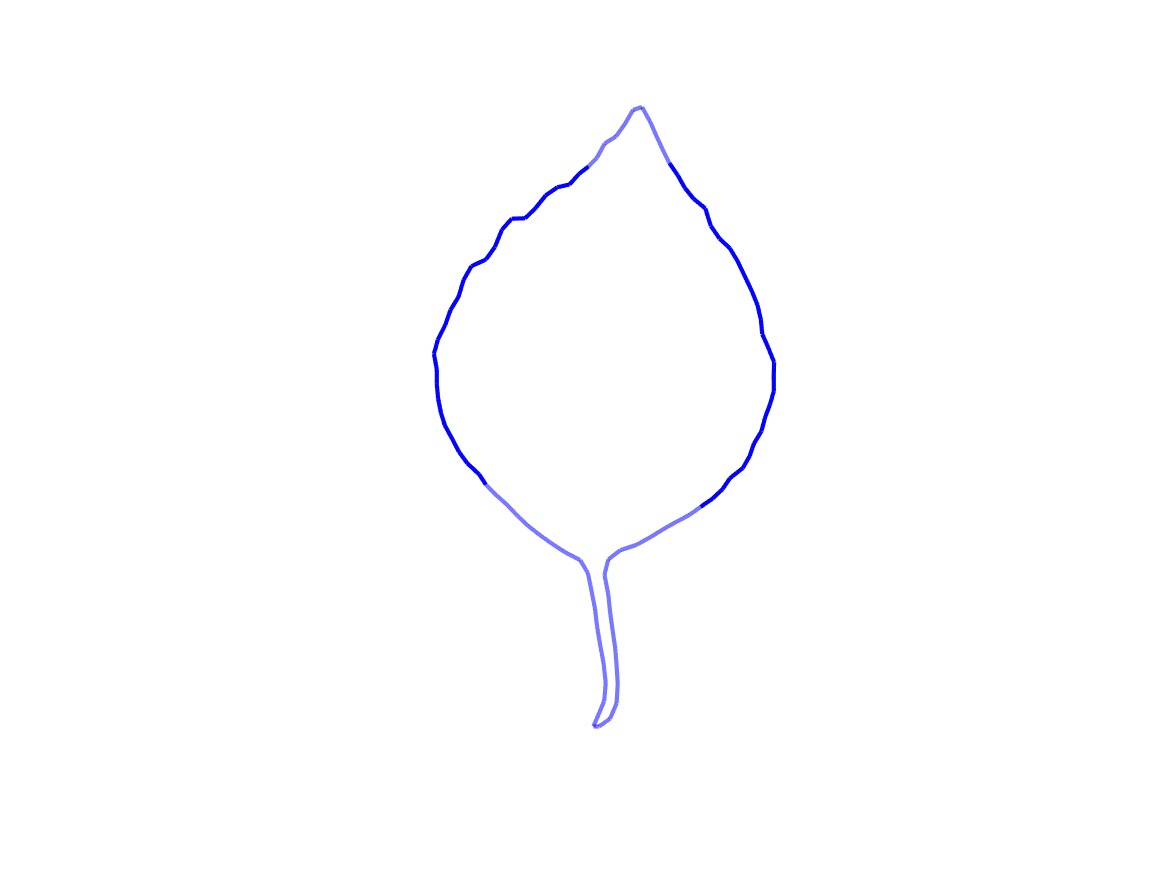}&
\includegraphics[angle=90,trim = 60mm 15mm 60mm 10mm ,clip,width=2.3cm]{./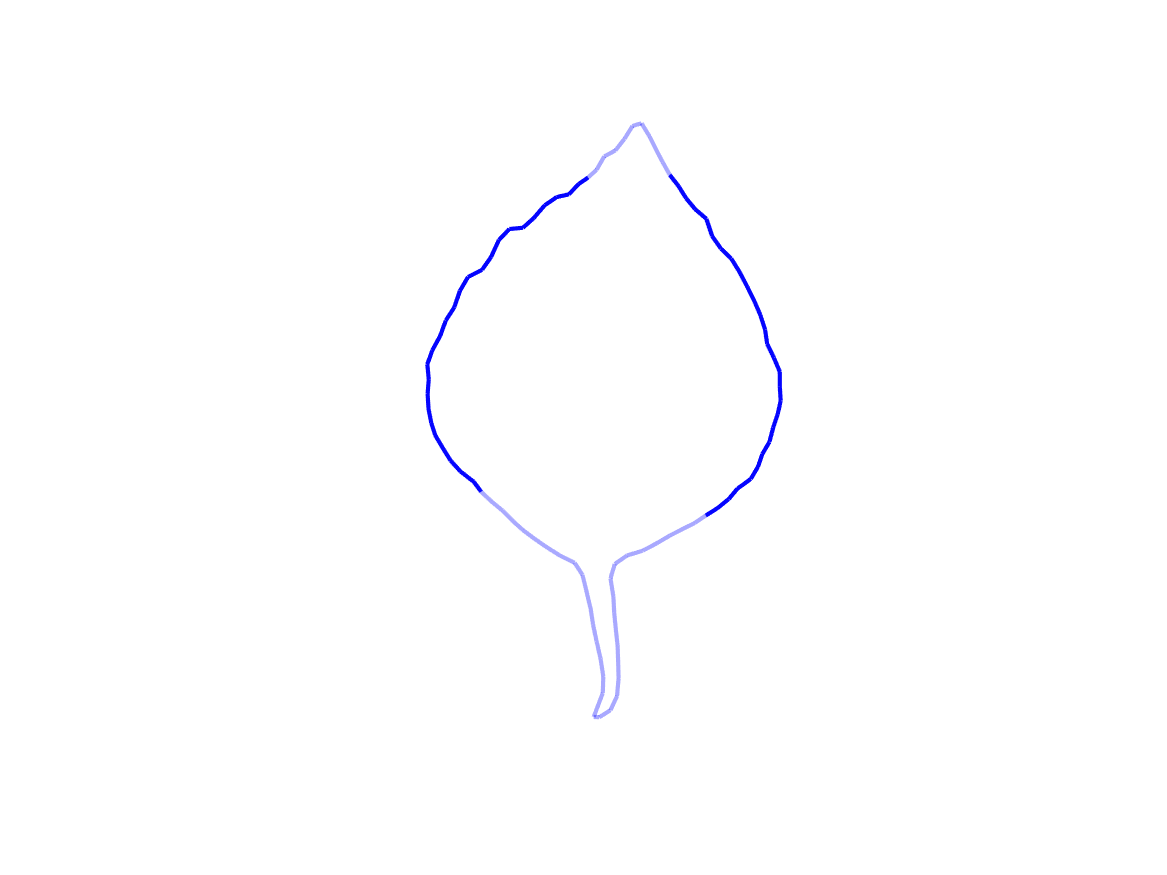}&
\includegraphics[angle=90,trim = 60mm 15mm 60mm 10mm ,clip,width=2.3cm]{./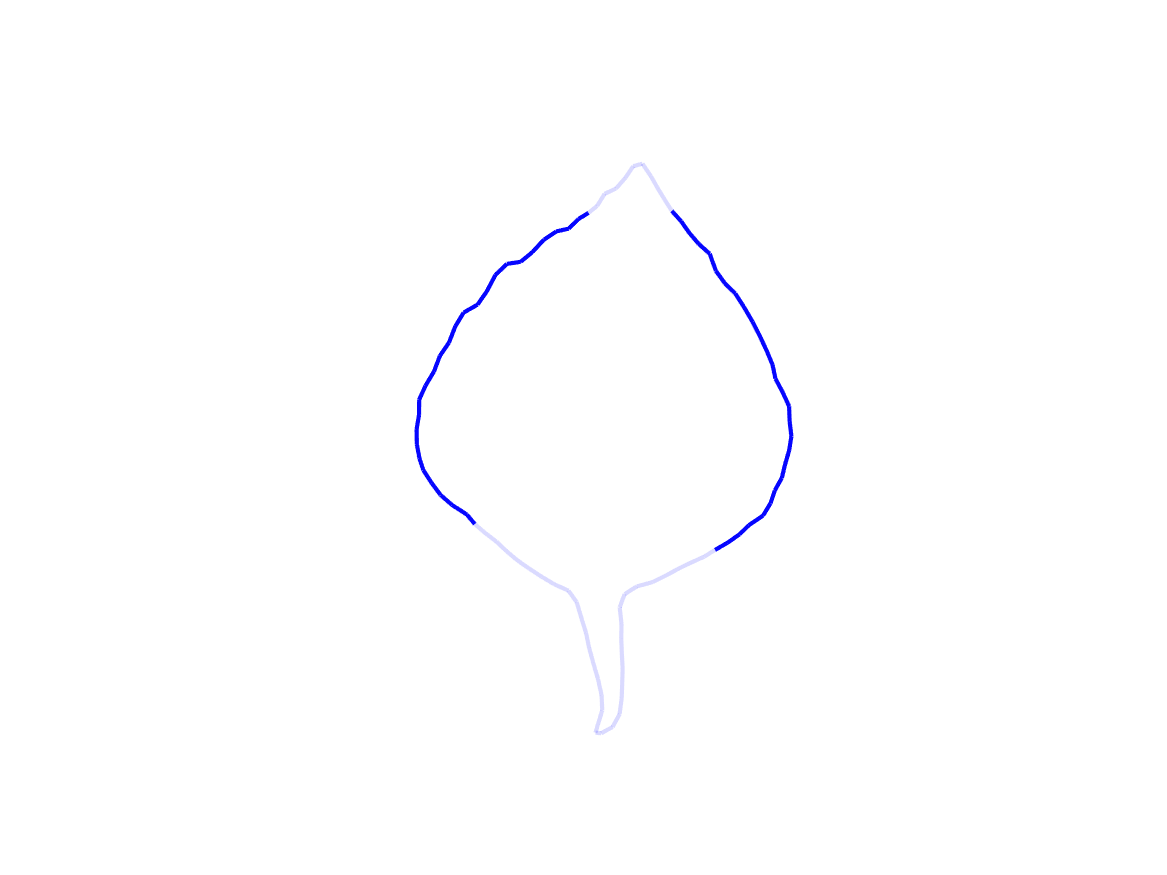}&
\includegraphics[angle=90,trim = 60mm 15mm 60mm 10mm ,clip,width=2.3cm]{./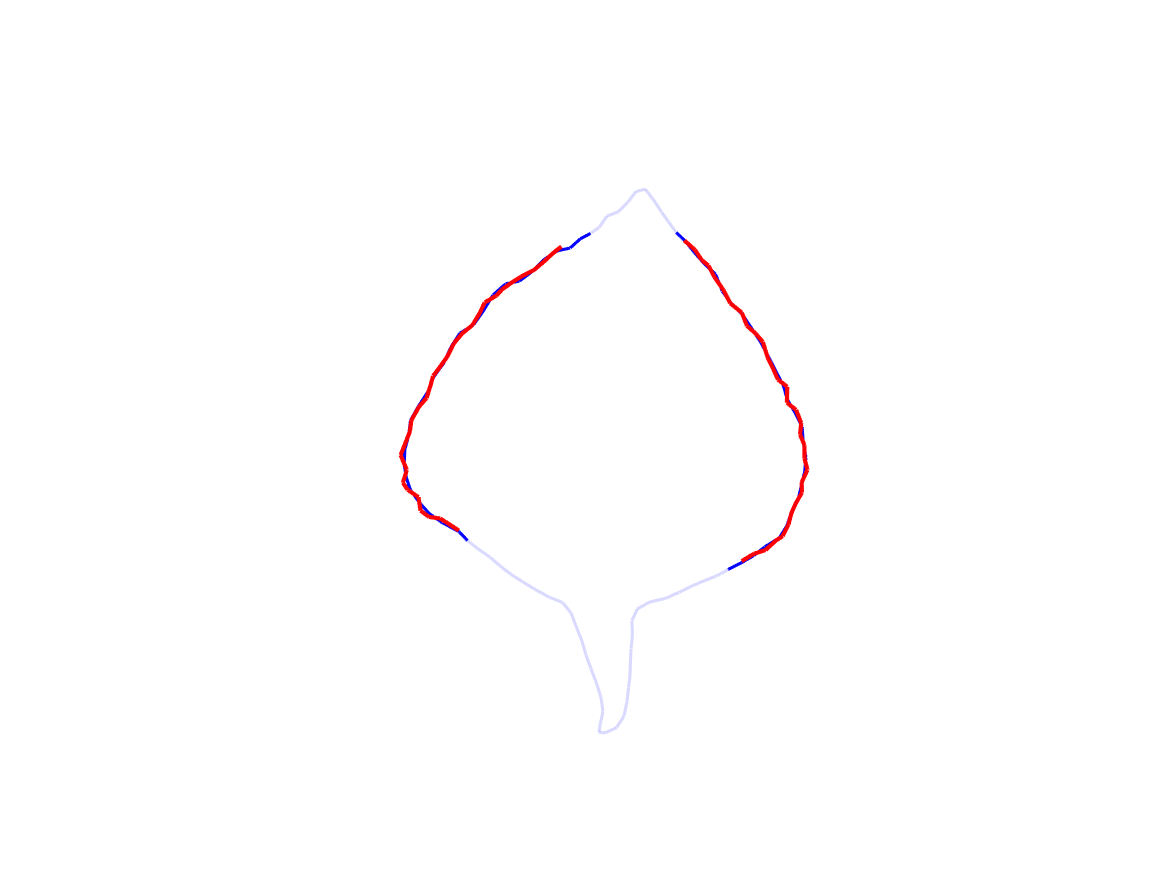}\\
\includegraphics[angle=90,trim = 40mm 15mm 40mm 10mm ,clip,width=2.3cm]{./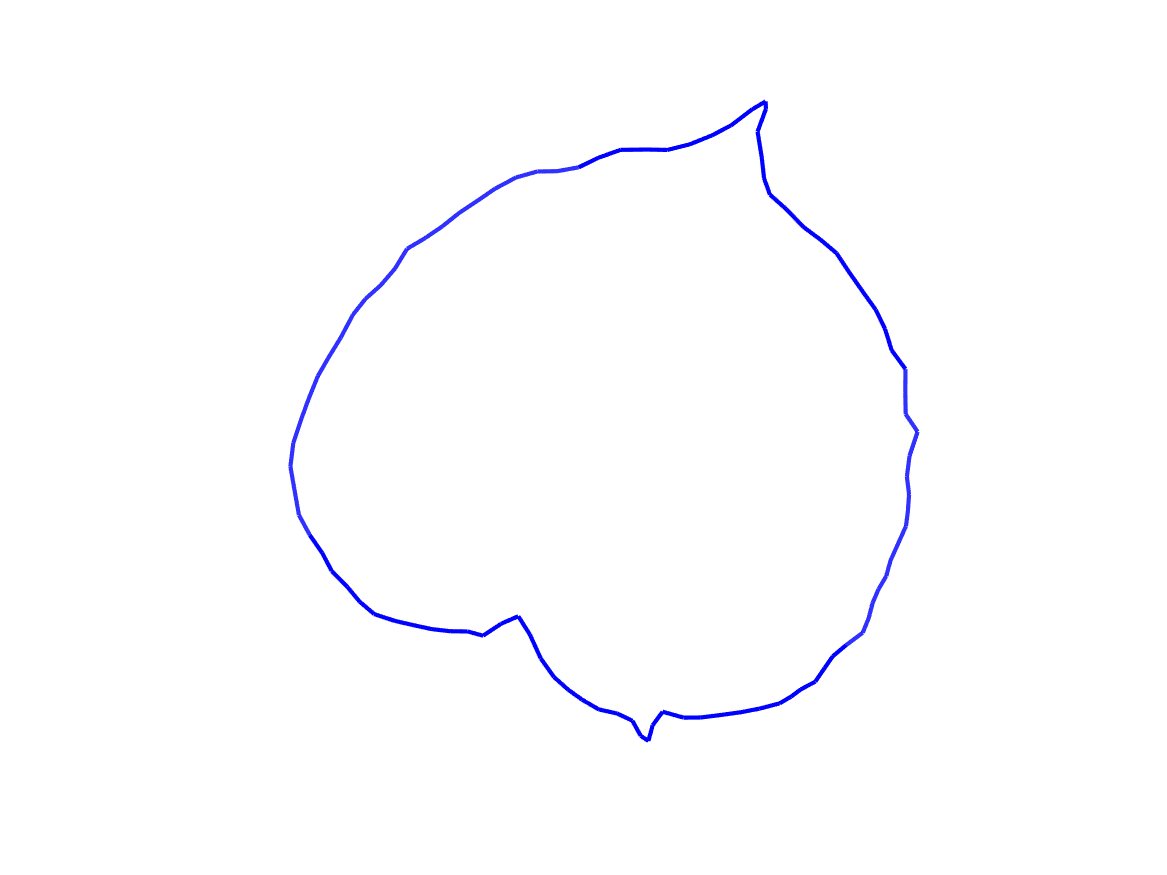}&
\includegraphics[angle=90,trim = 40mm 15mm 40mm 10mm ,clip,width=2.3cm]{./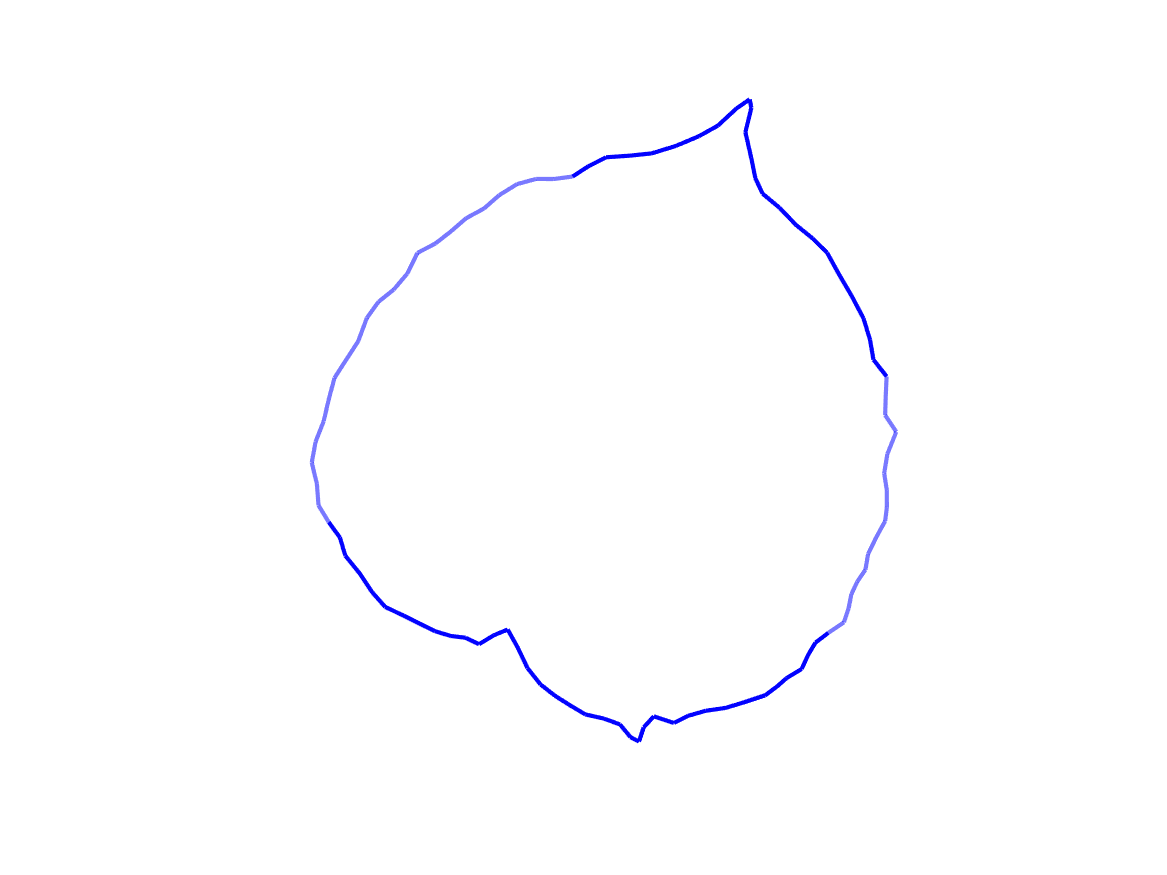}&
\includegraphics[angle=90,trim = 40mm 15mm 40mm 10mm ,clip,width=2.3cm]{./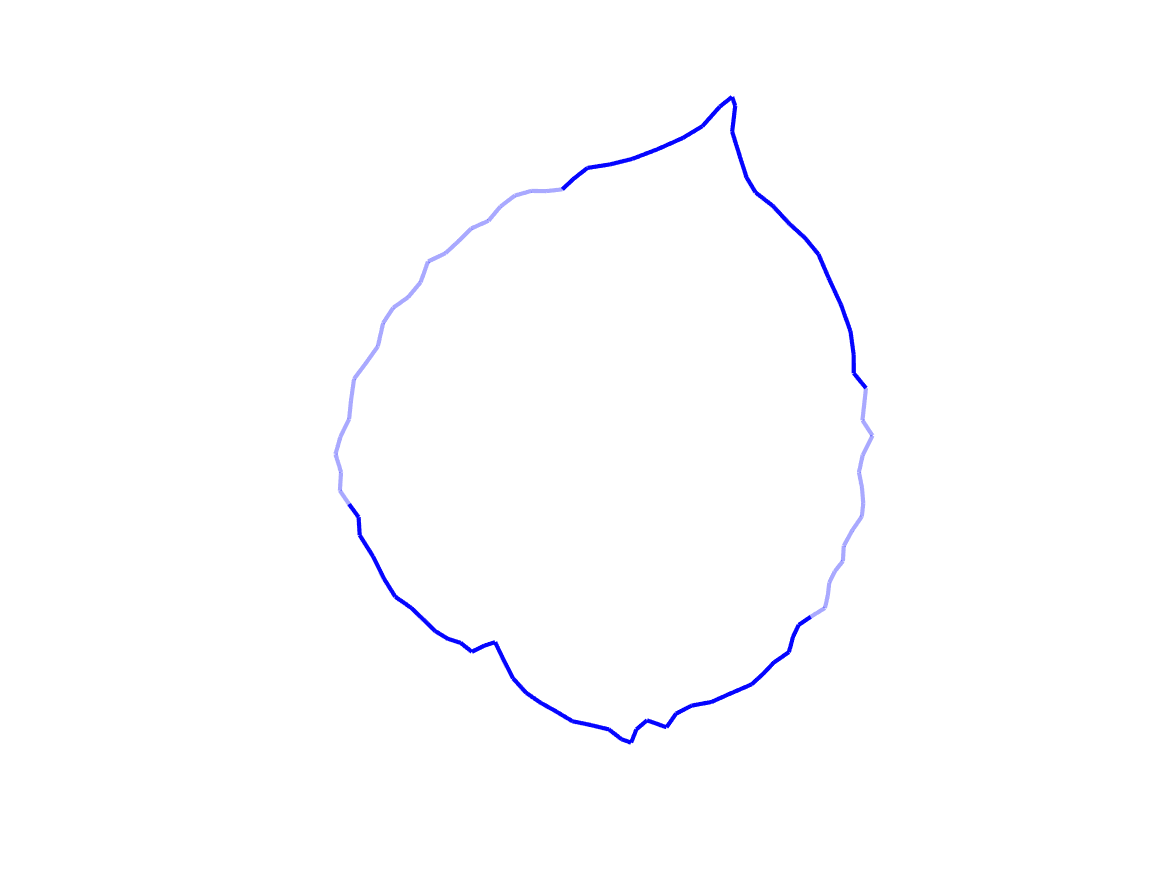}&
\includegraphics[angle=90,trim = 40mm 15mm 40mm 10mm ,clip,width=2.3cm]{./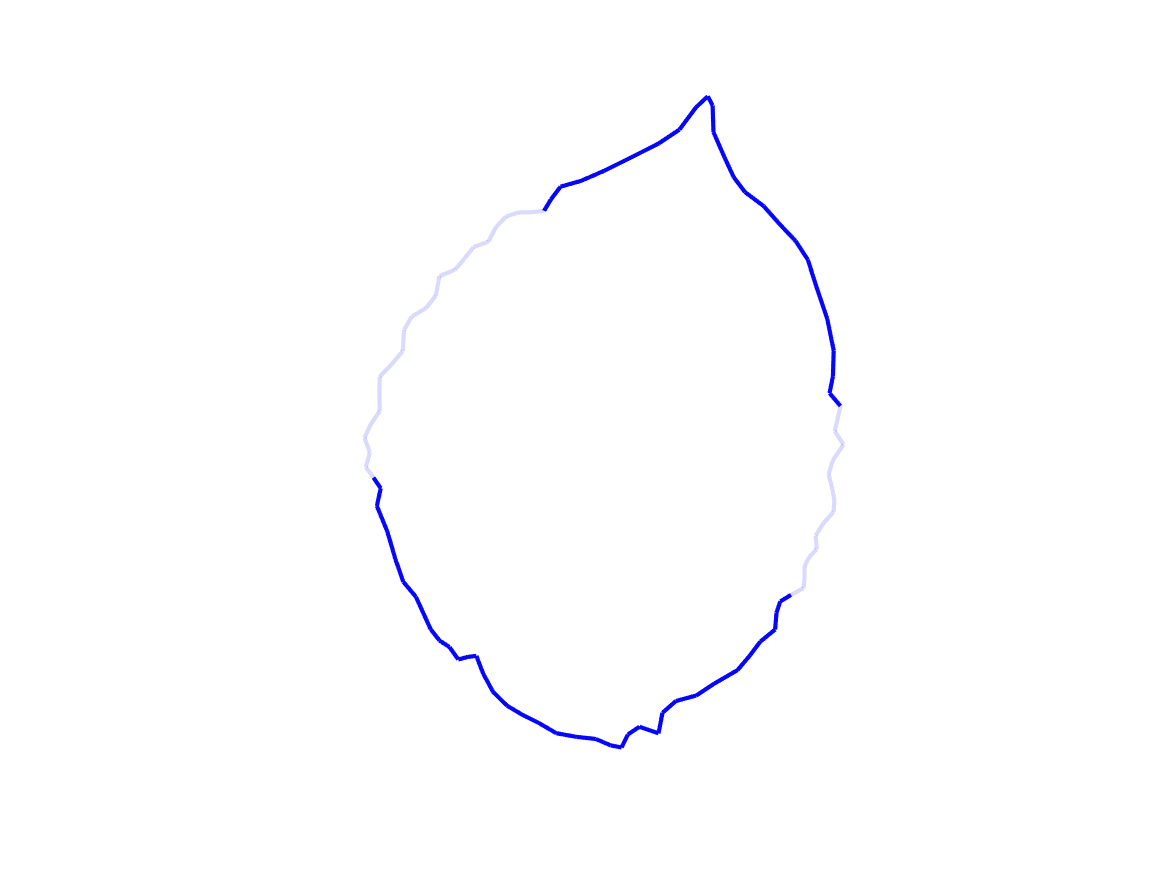}&
\includegraphics[angle=90,trim = 40mm 15mm 40mm 10mm ,clip,width=2.3cm]{./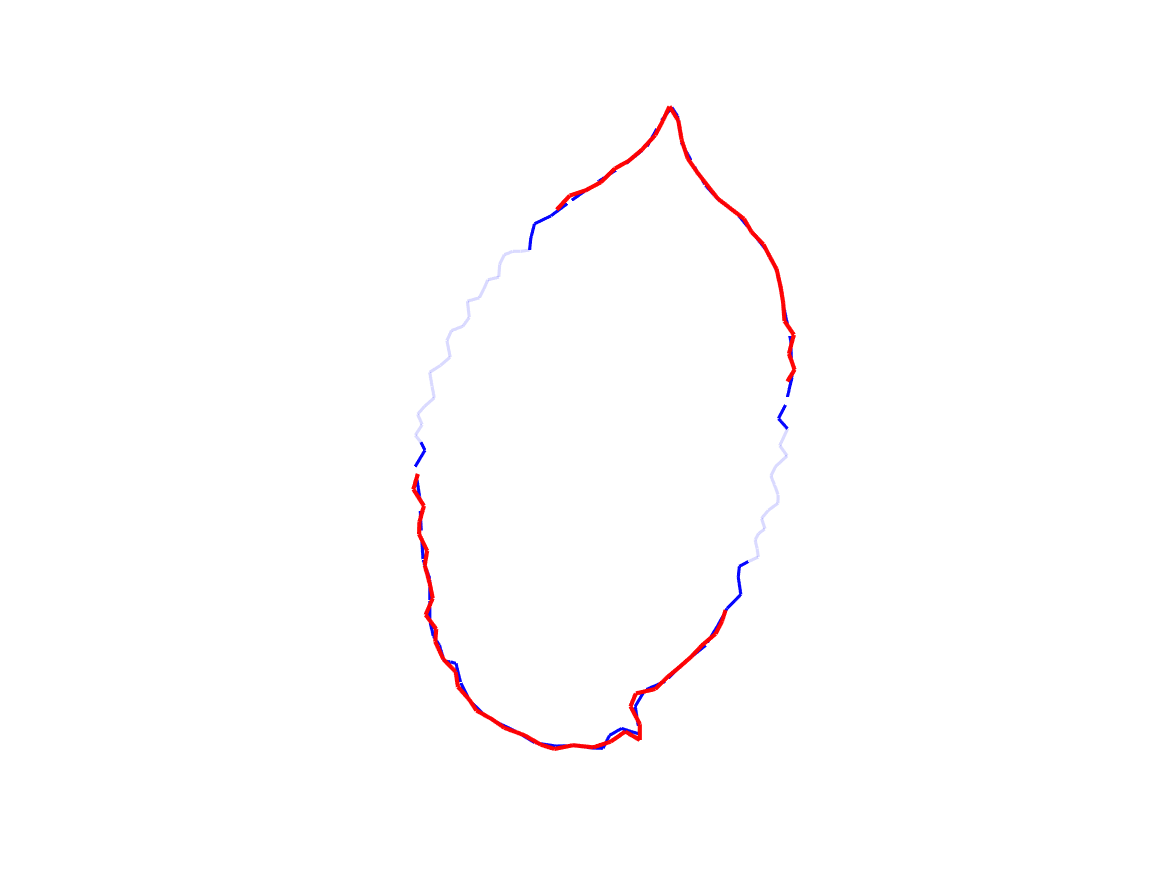}\\
$t=0$ & $t=0.25$ & $t=0.5$ & $t=0.75$ & $t=1$ 
\end{tabular}
\caption{\textbf{Matching incomplete leaves.} Geodesics between Swedish leaves with partial matching constraints. The source (blue at $t=0$) and target (red at $t=1$) have distinct topologies, with the source being a closed curve, and the target being e.g., an open curve or having multiple connected components. The target is overlayed on the transformed source $c(1)$ on the right, and parts of the transformed source which get ``erased'', i.e. where the estimated weight function vanishes, are colored in progressively transparent shades of blue. (Top to bottom) The estimated geodesic distances are (i) $\overline{\operatorname{dist}}(c_0,c_1)= 1.29$, (ii) $\overline{\operatorname{dist}}(c_0,c_1)= 0.95$, (iii) $\overline{\operatorname{dist}}(c_0,c_1)= 0.51$, (iv) $\overline{\operatorname{dist}}(c_0,c_1)= 1.35$.}
\label{fig:match_leaves_source_weight}
\end{center}
\end{figure}

\textbf{Matching partially observed data.}
In many practical applications, datasets frequently involve incomplete shapes. This can be due to multiple factors such as occlusions, segmentation issues or simply inadequate fields of view during the data acquisition process. In such cases, performing shape registration using usual elastic shape matching algorithms will typically bend, stretch or compress portions of the source in an attempt to e.g. fill in some of those missing parts. This results in unnatural deformations which lead to a significant overestimation of the Riemannian distance between the shape graphs, as we pointed out with Fig.~\ref{fig:match_branches_compare_fixed_vs_source_weights}. One way \red{to} overcome this issue is through partial matching, which broadly refers to the process of registering a source shape onto an (unknown) subset of a target shape. In our proposed weighted shape graph matching framework, this can be achieved by setting vanishing weights to portions of the transformed source that cannot be matched to the target. The automatic estimation of these vanishing parts is achieved precisely thanks to the optimality criterion of \eqref{eq:relaxed_weighted_shape_graph_match}, where the $\{0,1\}$ penalty $\tilde{F}_{\rho_0}$ discussed in Section \ref{ssec:weigh_regularizer} enforces the weight function to be essentially binary. We illustrate the efficiency of this approach with experiments on 2D curves extracted from the Swedish leaf dataset\footnote{\href{https://www.cvl.isy.liu.se/en/research/datasets/swedish-leaf/}{https://www.cvl.isy.liu.se/en/research/datasets/swedish-leaf/}}, for which different parts of the target curves were artificially removed, see Fig.~\ref{fig:match_leaves_source_weight}.

\textbf{Matching with topological variations}
As outlined in Section~\ref{ssec:limitations} with Fig.~\ref{fig:match_branches_compare_fixed_vs_source_weights}, beyond the situation of partial data observation, one advantage of our weighted shape graph matching framework is that it can account for certain topological changes between source and target shape graphs.

\begin{figure}[h!]
\begin{tabular}{ccccc}
\includegraphics[trim = 80mm 90mm 80mm 20mm ,clip,width=2.5cm]{./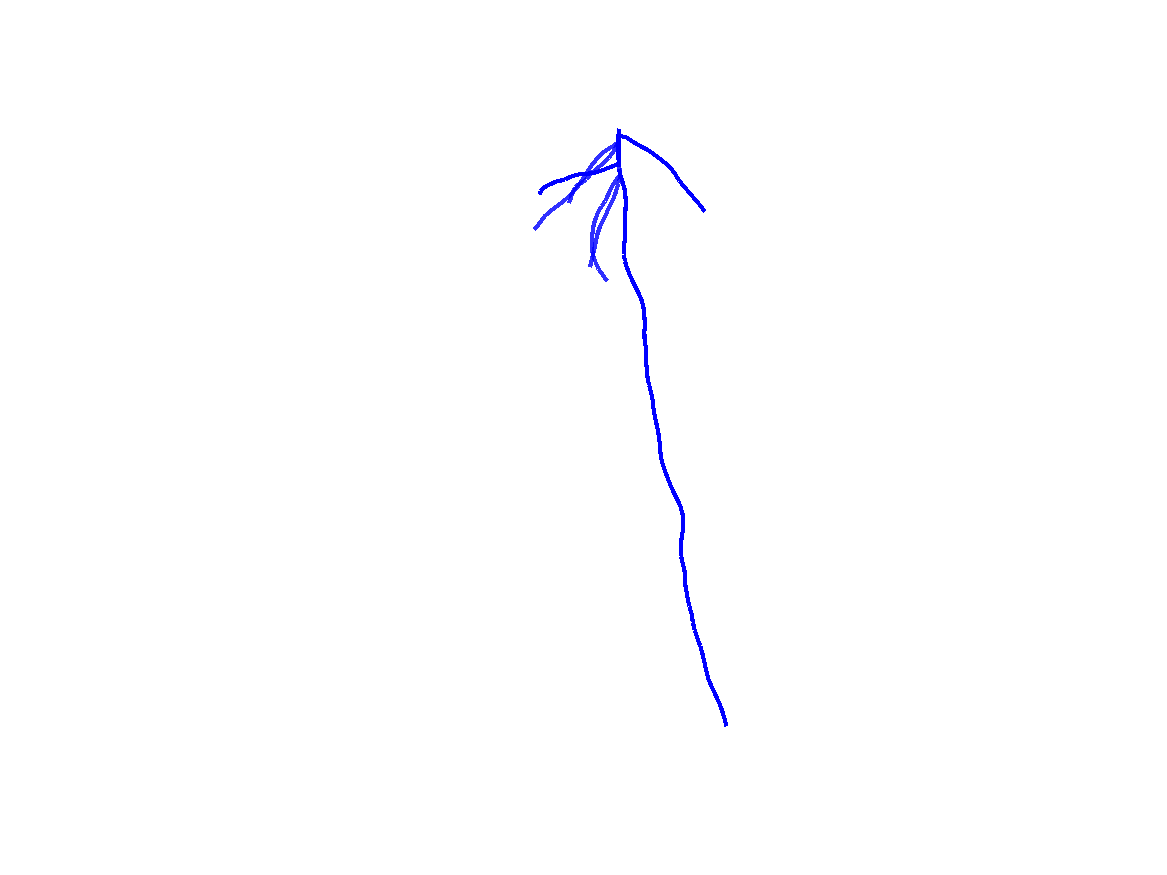}&
\includegraphics[trim = 80mm 90mm 80mm 20mm ,clip,width=2.5cm]{./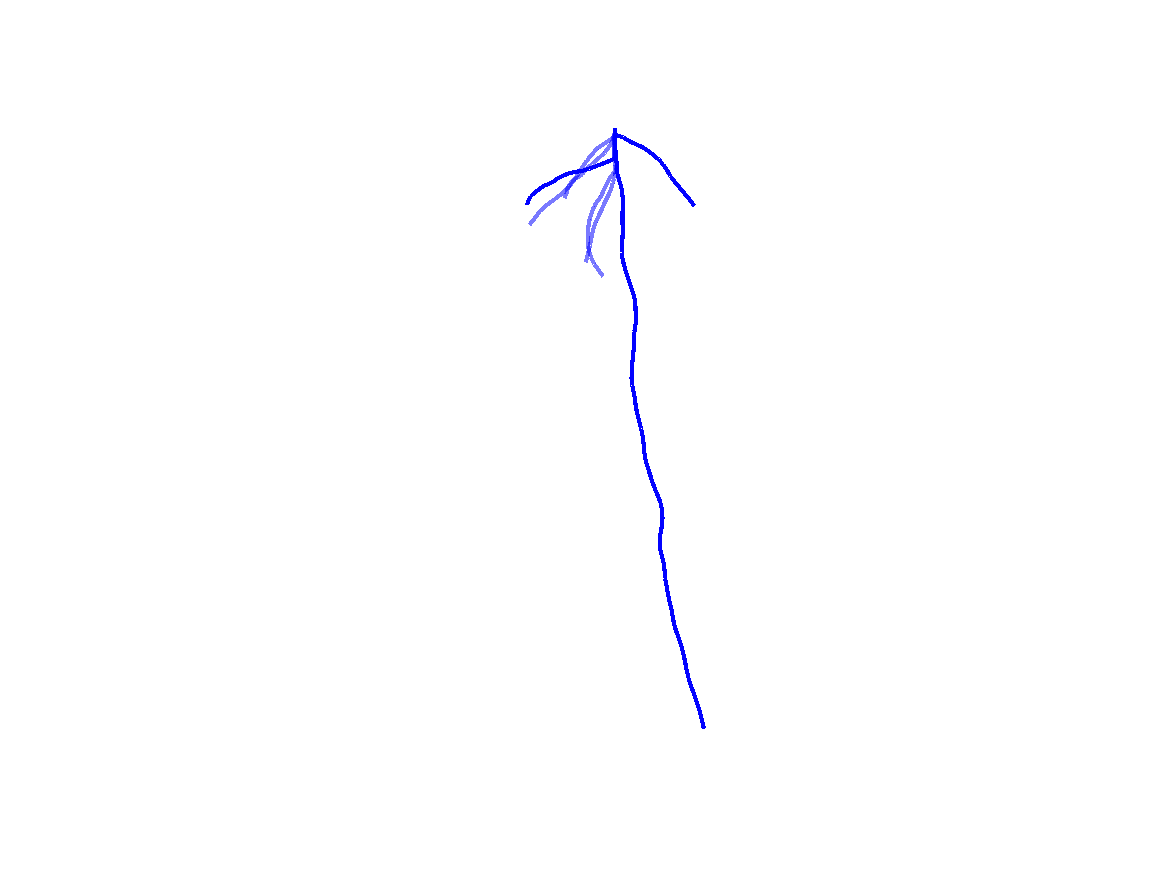}&
\includegraphics[trim = 80mm 90mm 80mm 20mm ,clip,width=2.5cm]{./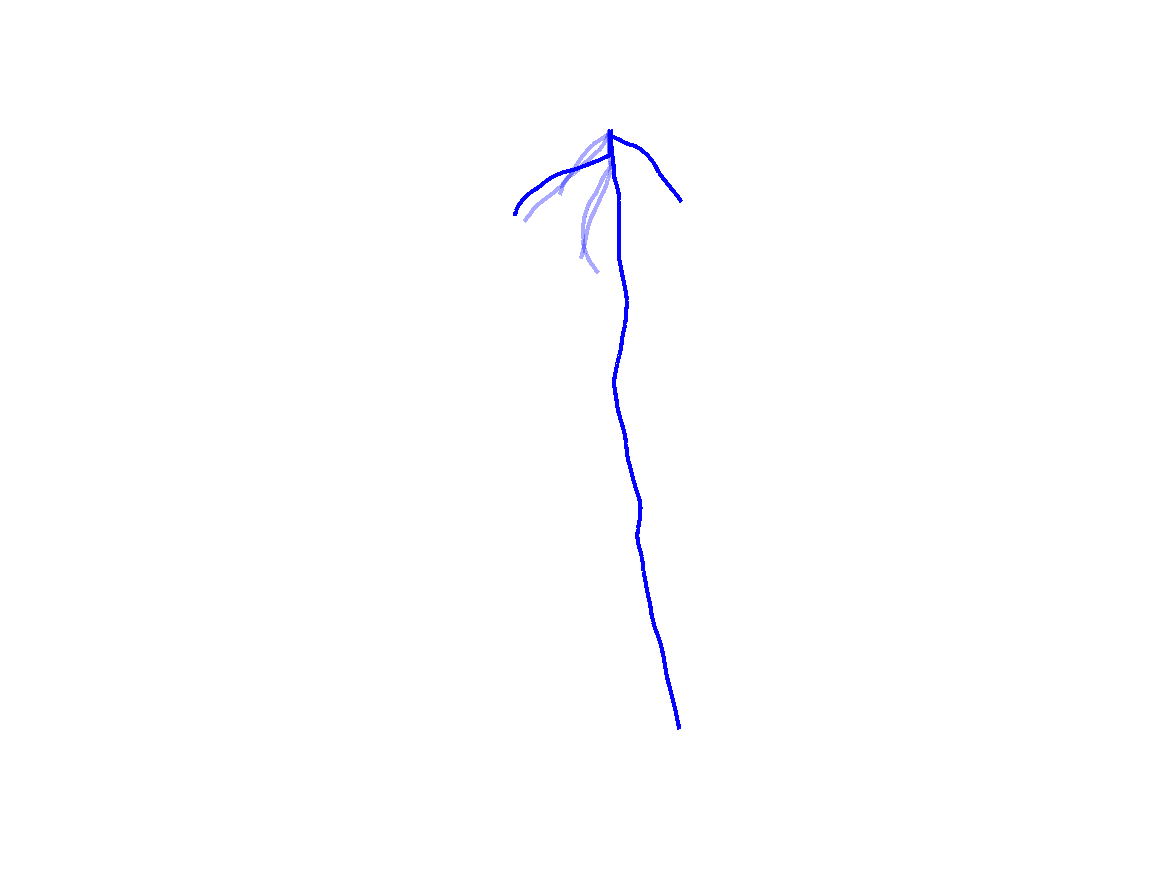}&
\includegraphics[trim = 80mm 90mm 80mm 20mm ,clip,width=2.5cm]{./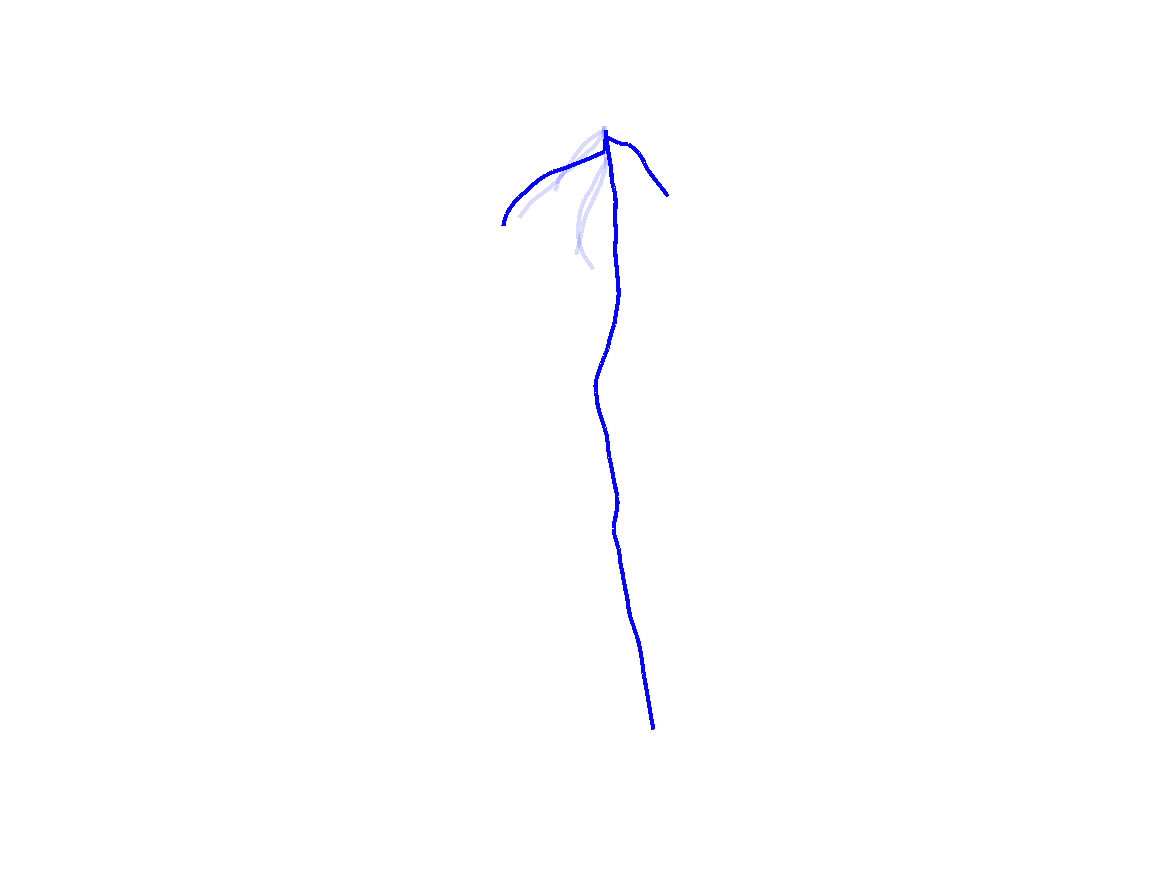}&
\includegraphics[trim = 80mm 90mm 80mm 20mm ,clip,width=2.5cm]{./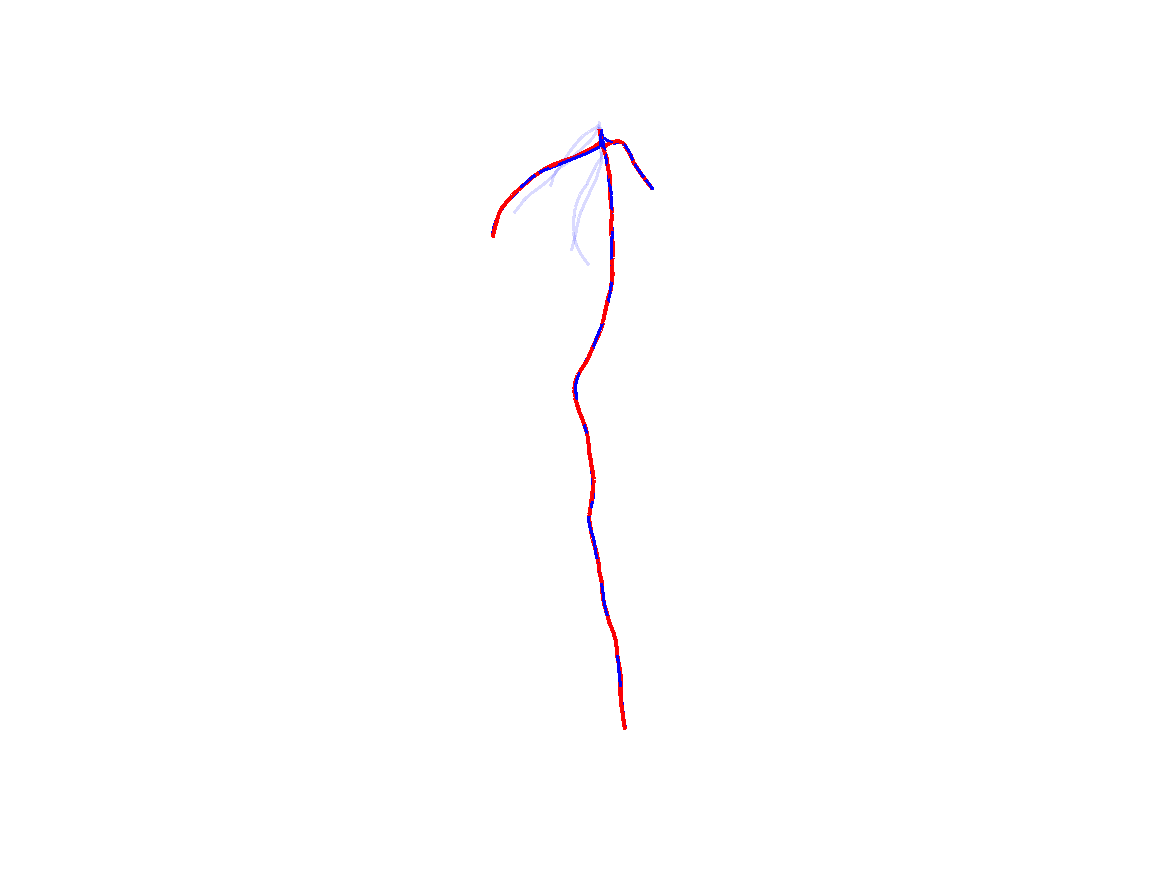}\\
$t=0$ & $t=0.25$ & $t=0.5$ & $t=0.75$ & $t=1$ 
\end{tabular}
\caption{\textbf{3D maize root systems.} The source (blue at $t=0$) is a 3D maize root system with multiple lateral roots, and the target (red at $t=1$) is another maize root system with only 2 lateral roots. The extra branches of the transformed source which get ``erased'' are colored in progressively transparent shades of blue. The estimated geodesic distance here is $\overline{\operatorname{dist}}(c_0,c_1)=0.64$. We note that the left branch of the target matches with the particular branch on the transformed source shown above because matching with this specific branch requires the least amount of geometric deformation in $\R^3$ (and hence the least amount of energy) when compared to any of the 4 other branches. The algorithm then decides to erase these other 4 branches.
\label{fig:match_3droots_source_weight}}
\end{figure}

\begin{figure}[h!]
\begin{center}
\begin{tabular}{ccccc}
\includegraphics[angle=0, trim = 69.5mm 45mm 64.5mm 40mm ,clip,height=1.5cm]{./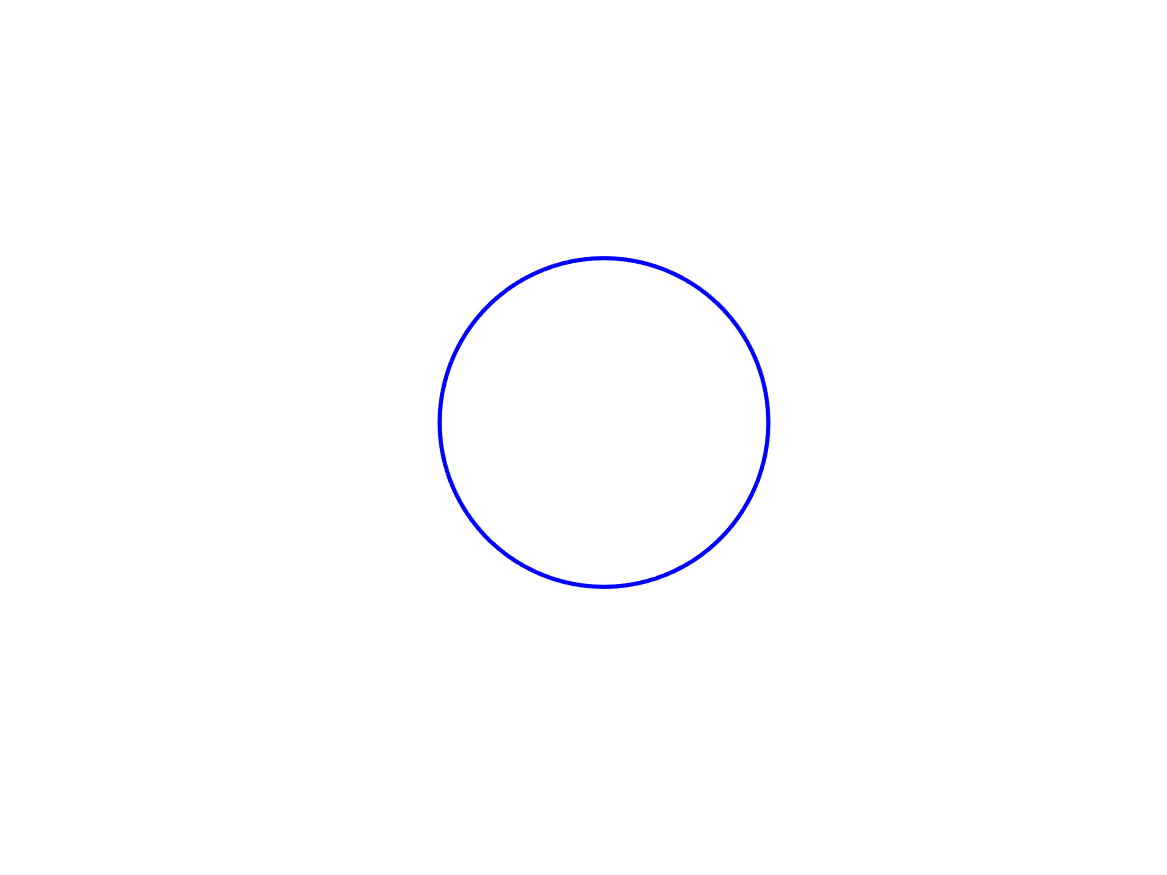}&
\includegraphics[angle=0, trim = 62mm 45mm 57mm 40mm ,clip,height=1.5cm]{./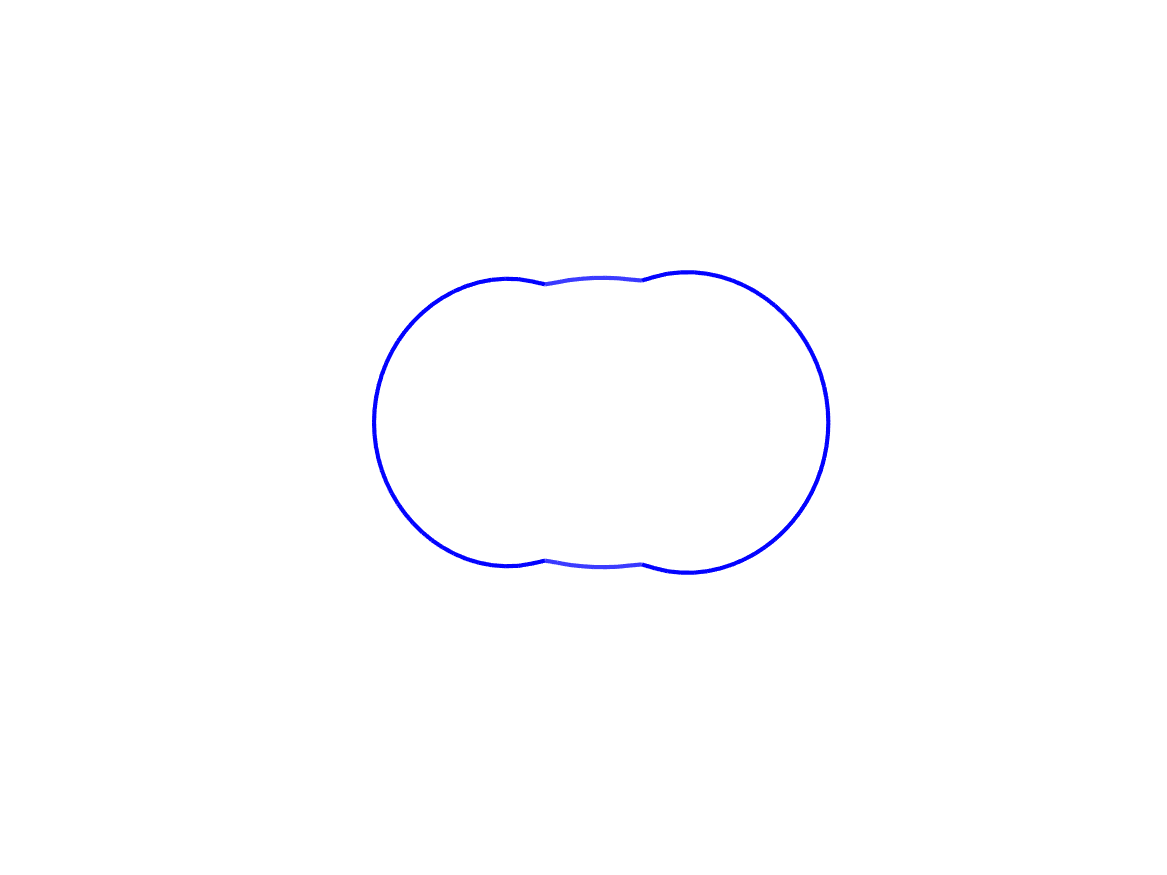}&
\includegraphics[angle=0, trim = 53mm 45mm 47.5mm 40mm ,clip,height=1.5cm]{./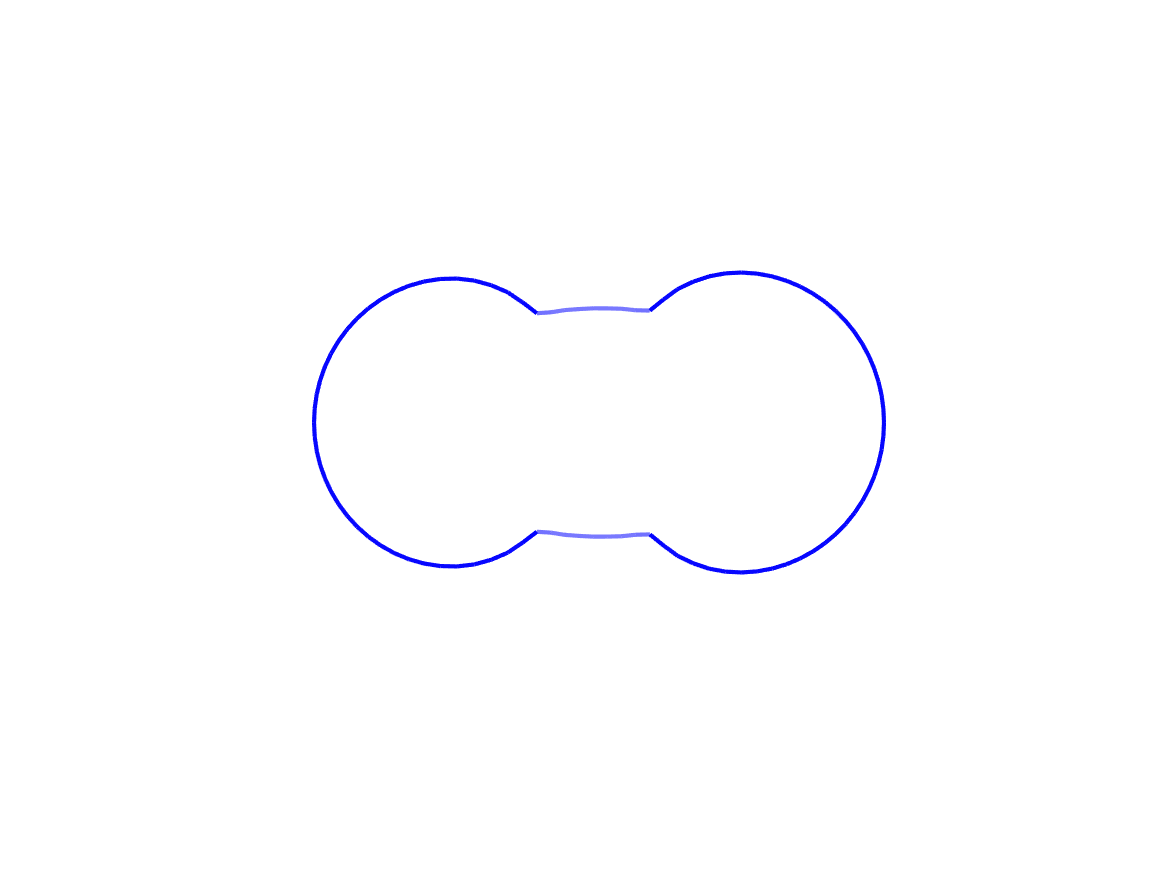}&
\includegraphics[angle=0, trim = 44mm 45mm 38mm 40mm ,clip,height=1.5cm]{./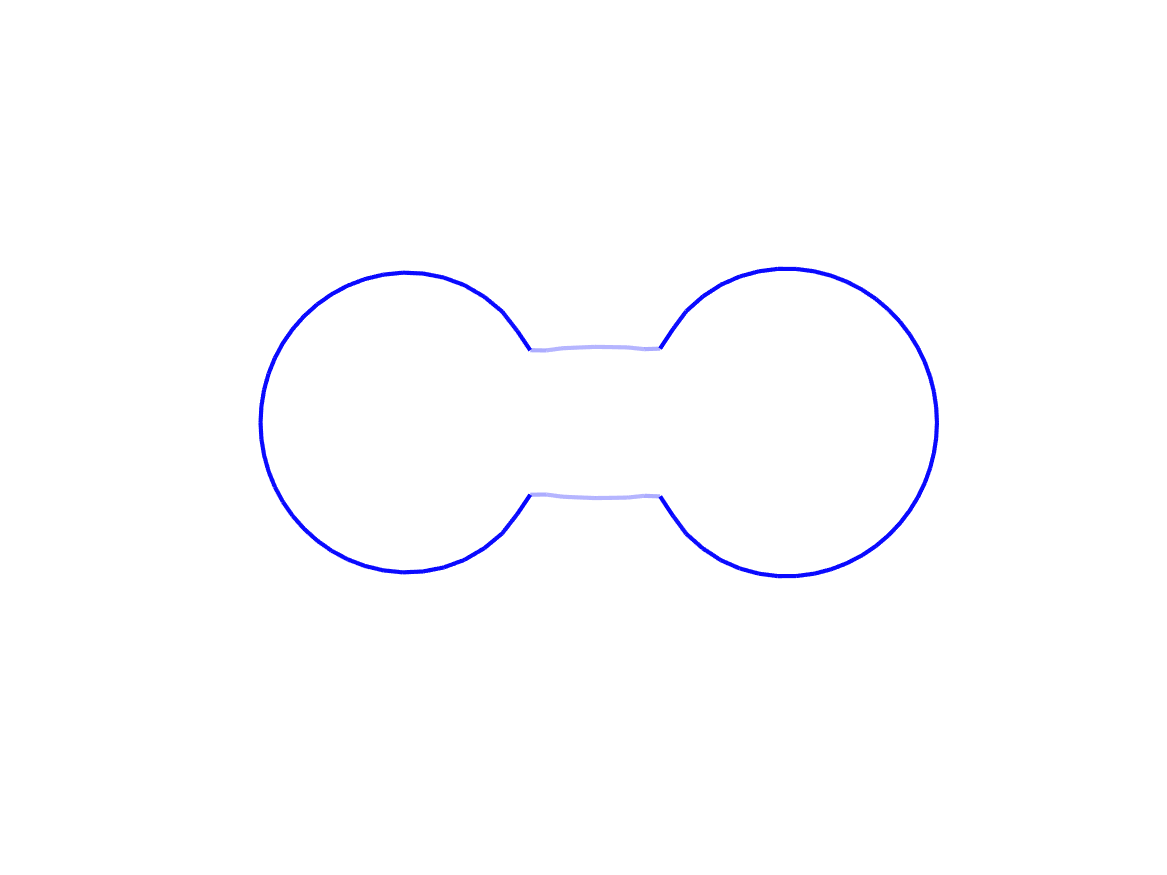}&
\includegraphics[angle=0, trim = 32mm 45mm 25mm 40mm ,clip,height=1.5cm]{./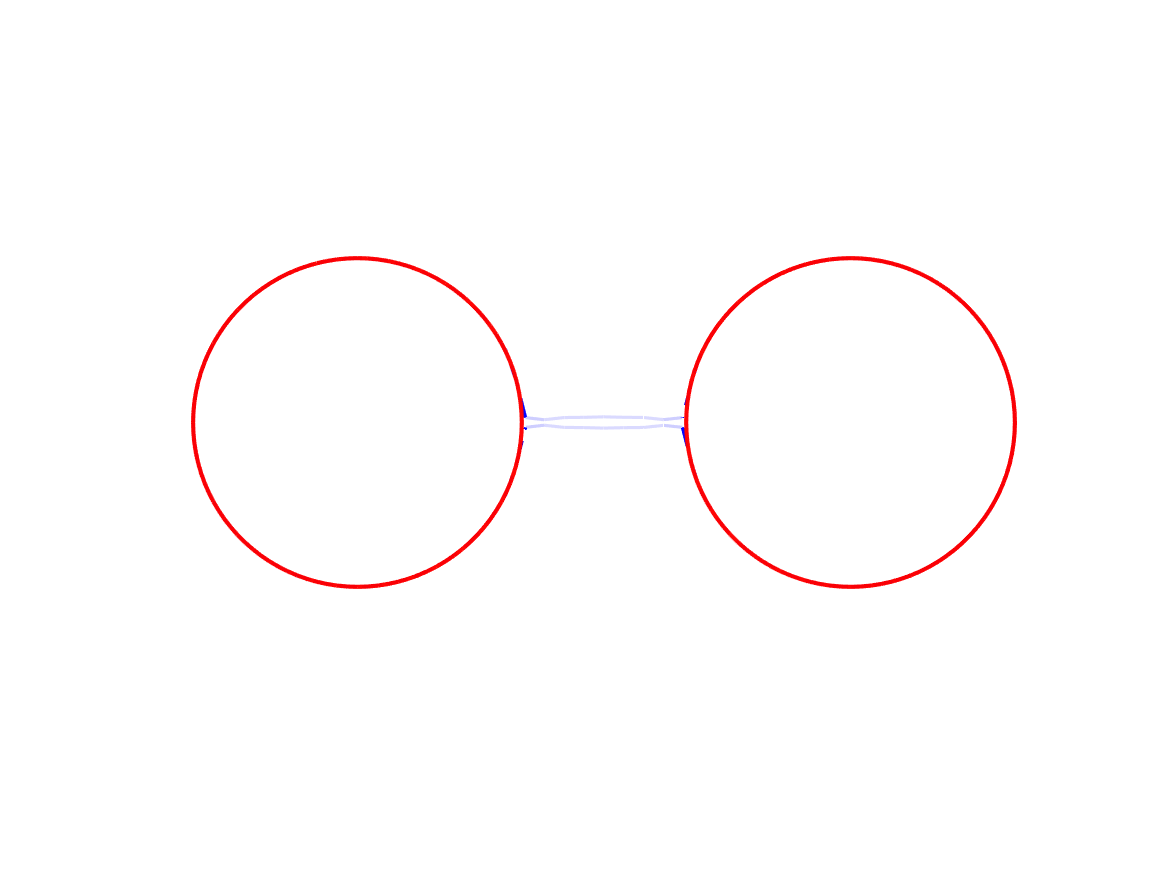}\\
$t=0$ & $t=0.25$ & $t=0.5$ & $t=0.75$ & $t=1$ 
\end{tabular}
\end{center}
\caption{\textbf{Splitting into multiple connected components.} In this example, the source shape graph consists of a single circle while the target is made of two disconnected circles. The target is overlayed on the transformed source on the rightmost image, where the estimated weights on the ``bridge" between the two connected components are equal to $0$, shaded in translucent blue. The method thus effectively erases this part. We find an estimated geodesic distance of $\overline{\operatorname{dist}}(c_0,c_1)= 2.64$.}
\label{fig:match_circles_source_weight}
\end{figure}

Here again, by allowing variations in the weight function in addition to the geometric deformation, we can effectively allow certain components on the transformed source shape graph to get ``erased'' if they have no corresponding matching part in the target. This is essential for instance in situations where the source shape graph has extra branches compared to the target, as we show with the examples of 3D maize root systems\footnote{\href{https://github.com/RSA-benchmarks/collaborative-comparison}{https://github.com/RSA-benchmarks/collaborative-comparison}} in Fig.~\ref{fig:match_3droots_source_weight}. It can also constitute an effective way to model a shape undergoing certain topological transformations such as a split into distinct connected components, as illustrated with the synthetic example of Fig.~\ref{fig:match_circles_source_weight}.

Note that so far, we have only focused on the asymmetric scenario in which the source shape has extra components that need to be erased to match the target. However, one could technically model mass creation as well in order to tackle the opposite situation. However, this case is less obvious since one needs prior knowledge of potential components that need to be added to the source shape, because in our model weights are only defined on existing components. Similarly to what was proposed with other frameworks, such as the supertree model of \cite{feragen2020statistics}, one idea is to assume that the source shape graph contains additional phantom components or branches whose weights are initially set to $0$. We then let our algorithm increase those weights if needed. This is illustrated with the example of Fig.~\ref{fig:match_masscreatedeleteroot_source_weight} in which one source branch is being created while another one is being erased. Here as well, this process yields a quite natural geodesic between the two root systems, and hence a robust estimate of the elastic distance between them, despite the presence of topological inconsistencies.

\begin{figure}[h!]
\begin{center}
\begin{tabular}{ccccc}
\includegraphics[trim = 80mm 80mm 80mm 20mm ,clip,width=2.5cm]{./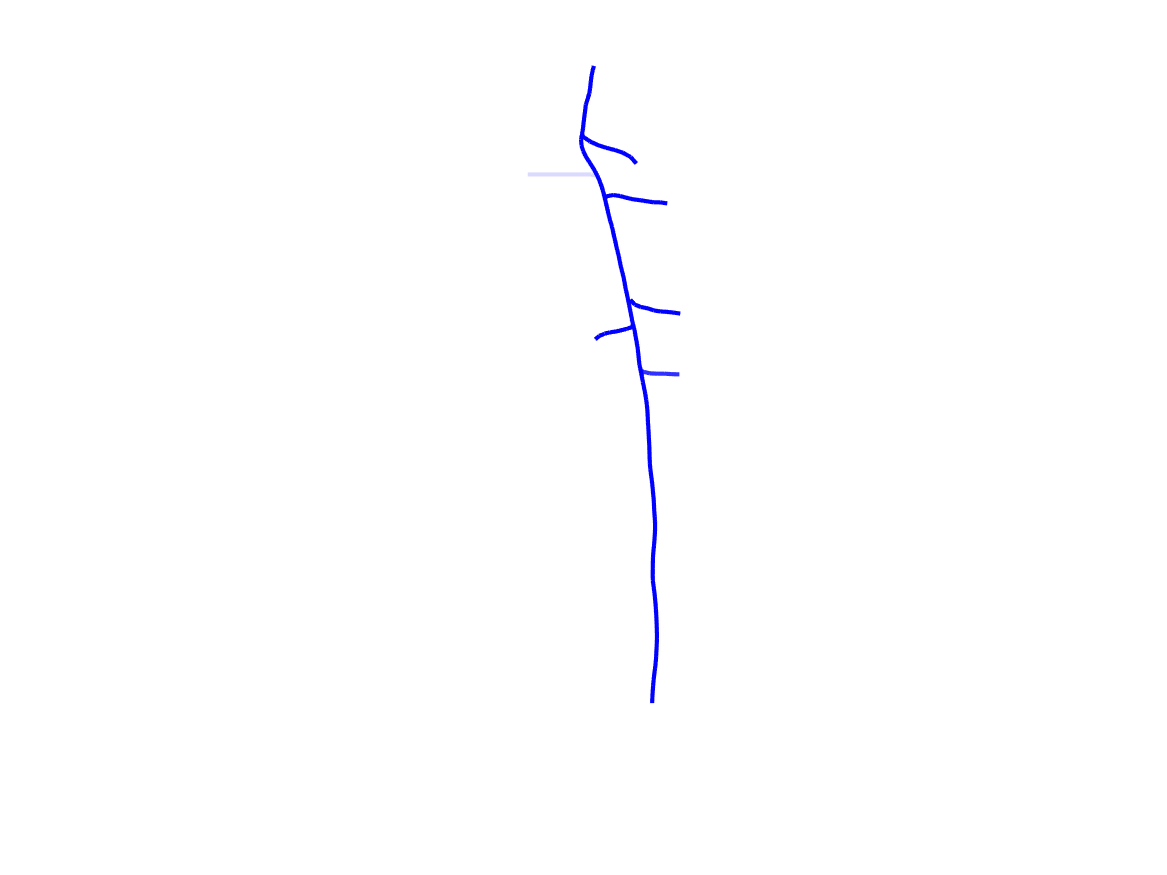}&
\includegraphics[trim = 80mm 80mm 80mm 20mm ,clip,width=2.5cm]{./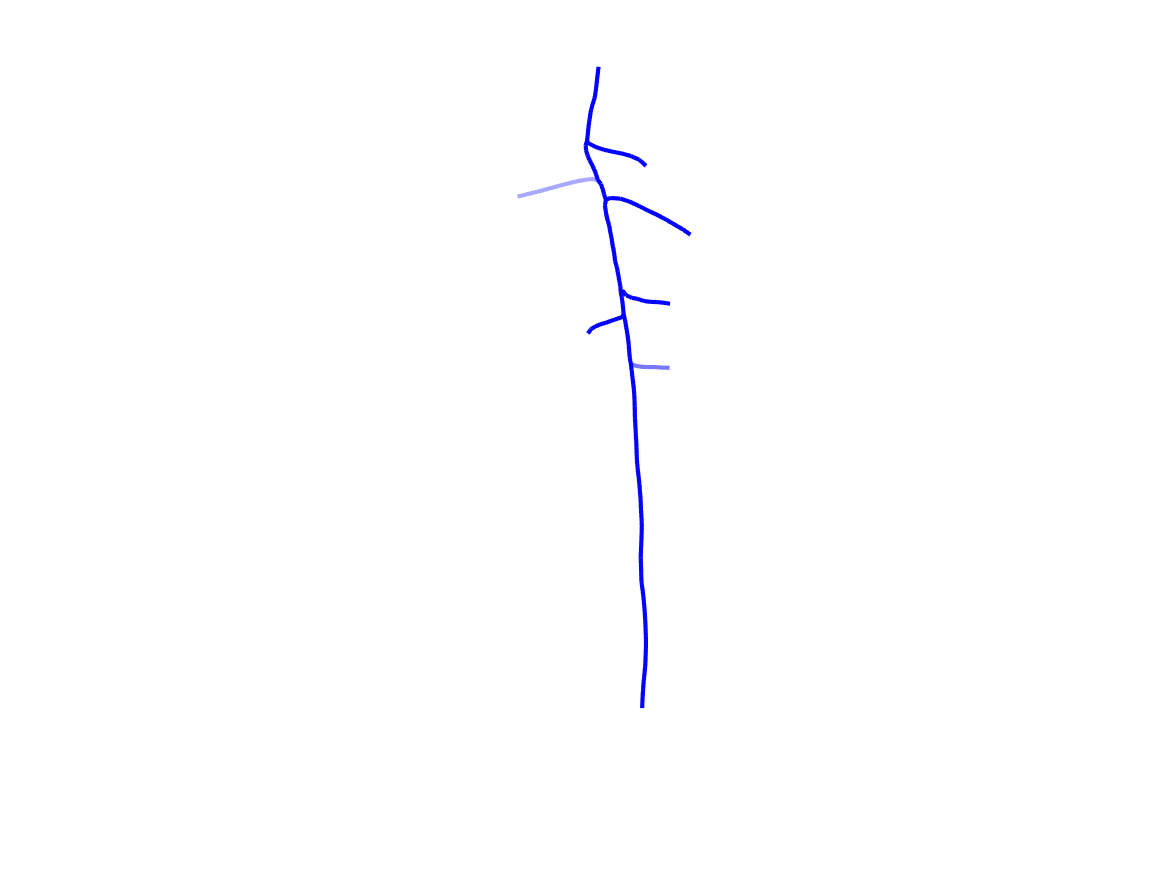}&
\includegraphics[trim = 80mm 80mm 80mm 20mm ,clip,width=2.5cm]{./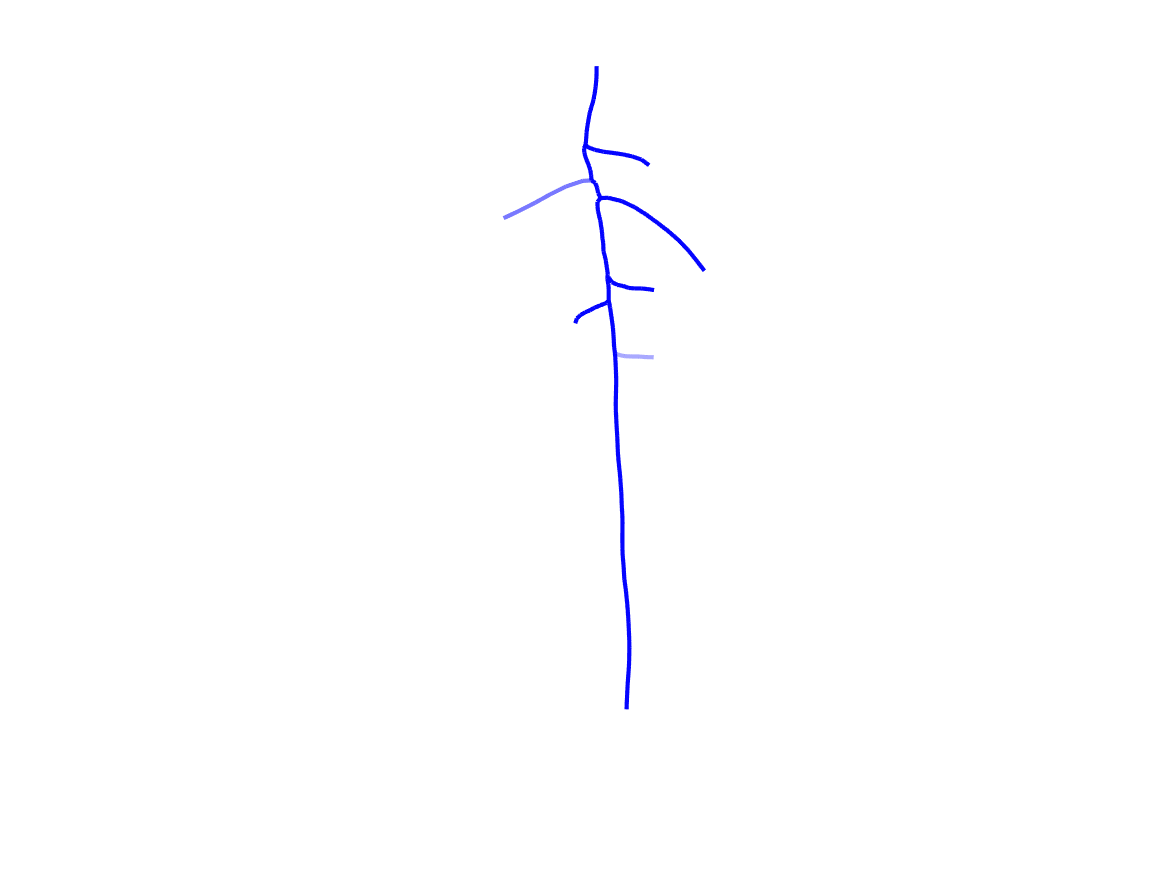}&
\includegraphics[trim = 80mm 80mm 80mm 20mm ,clip,width=2.5cm]{./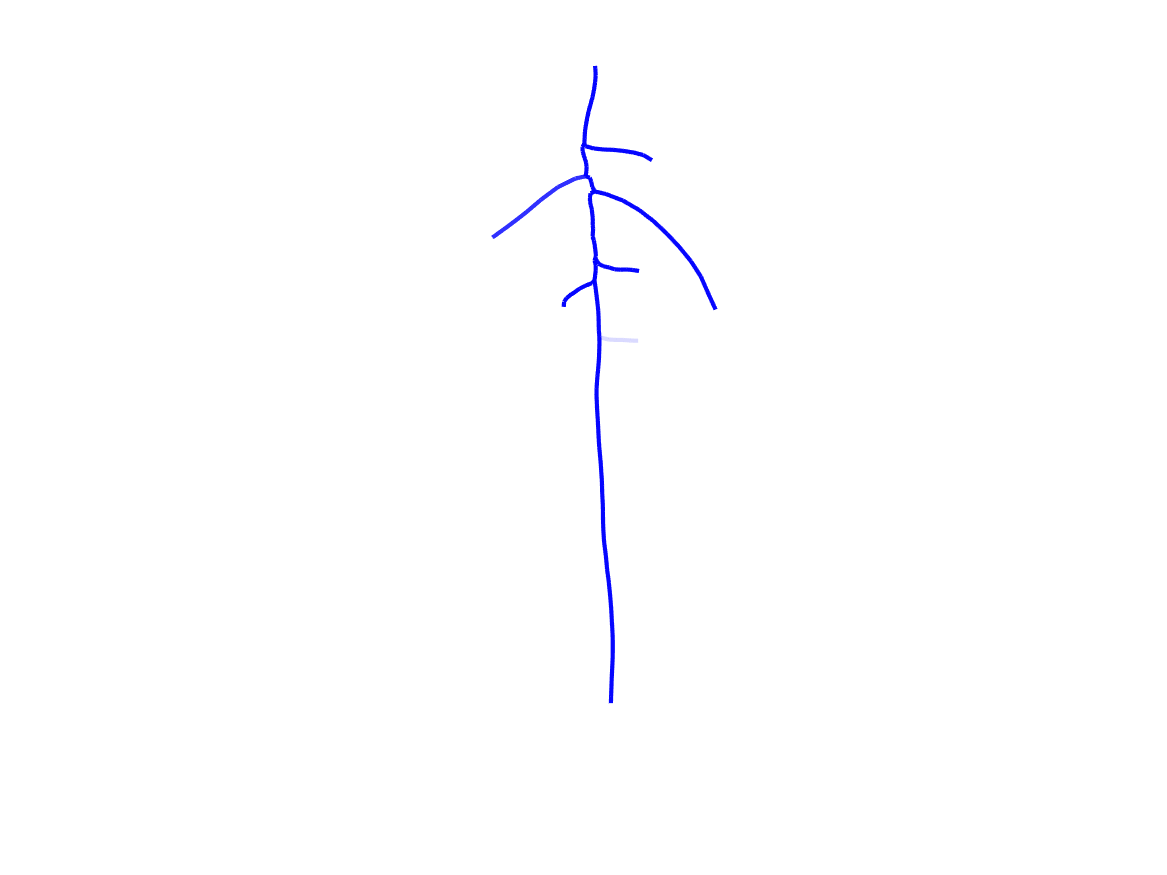}&
\includegraphics[trim = 80mm 80mm 80mm 20mm ,clip,width=2.5cm]{./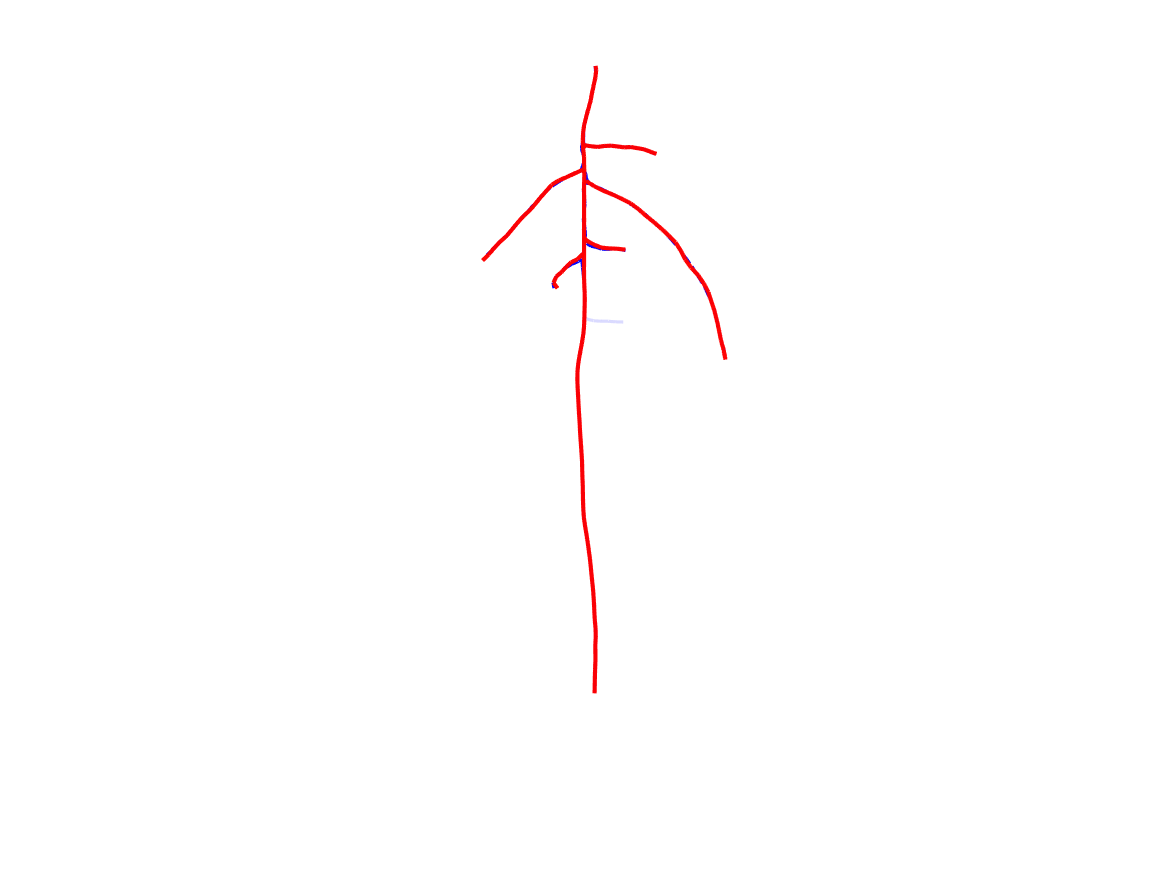}\\
$t=0$ & $t=0.25$ & $t=0.5$ & $t=0.75$ & $t=1$ 
\end{tabular}
\end{center}
\caption{\textbf{Simultaneous mass deletion and creation.} The source (blue at $t=0$) is a millet root with five lateral branches, to which we add one artificial lateral branch having weight zero (i.e., a phantom component), and the target (red at $t=1$) is another millet root with five lateral branches. We first ran the algorithm backwards by matching the target to the source, then used the resulting endpoint shape graph and weight function to determine the precise location at which to augment the source with a phantom branch.} Branches of the transformed source which get ``created'' are colored in progressively opaque shades of blue. Those that get ``deleted" are colored in progressively transparent shades of blue. The estimated geodesic distance is $\overline{\operatorname{dist}}(c_0,c_1)=0.90$.
\label{fig:match_masscreatedeleteroot_source_weight}
\end{figure}

\textbf{Shapes with multiplicities.} 
So far, in most of the examples we presented, the estimated weights were essentially binary as they were meant to account for partial matching constraints. We conclude this section with a final example to illustrate that our weighted shape graph model can be also useful when matching shapes with e.g. different densities or multiplicities. As a proof-of-concept, we consider the problem of matching a single line to a bundle of multiple curves, specifically a portion of the anterior commissure (CA) white matter fiber tract from the publicly available ISMRM 2015 Tractography Challenge repository\footnote{\url{http://www.tractometer.org/ismrm_2015_challenge/}}. This bundle, shown in Figure \ref{fig:match_1bundle}, contains 42 individual curves roughly aligned along a same path. Under a pure geometric matching model, the source curve would typically get folded multiple times as an attempt to compensate for the much higher total mass of the target. Although a natural solution could be to simply re-weight the shapes based on the number of curves in the bundle prior to matching, this procedure may be difficult to automate in practice when, for instance, bundles display crossing and fanning effects, or when some fibers are only partially recovered and possibly split into several components from the application of tractography algorithms. Our approach on the other hand allows us to bypass such a need by estimating local weight factors within the matching itself. As shown in Fig. \ref{fig:match_1bundle}, we are able to recover a transformed source curve that matches the overall geometry of the whole target bundle. The weight function we obtain has an average value of 38.8. We also notice that the estimated weights tend to be smaller near the extremities, which is consistent with the fact that the bundle is fanning in those parts, and thus has a lower fiber density.

\begin{figure}[h!]
\begin{center}
\begin{tabular}{ccccc}
\includegraphics[trim = 40mm 20mm 40mm 40mm ,clip,width=2.5cm]{./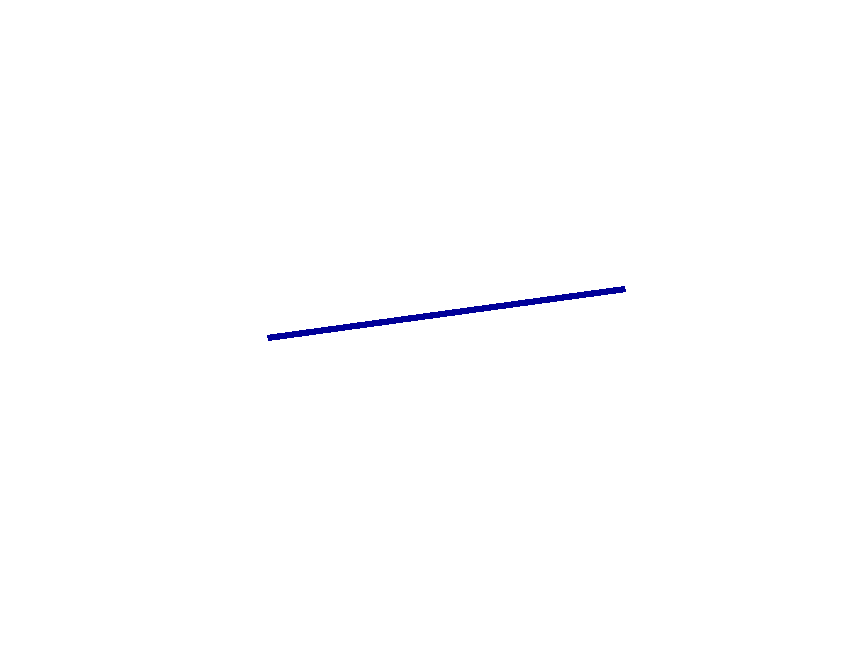}&
\includegraphics[trim = 40mm 20mm 40mm 40mm ,clip,width=2.5cm]{./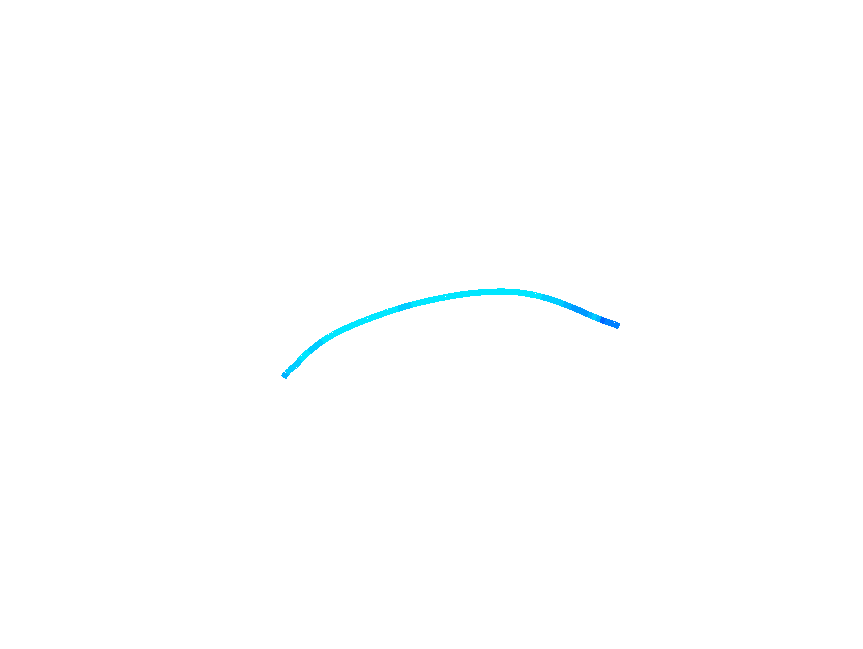}&
\includegraphics[trim = 40mm 20mm 40mm 40mm ,clip,width=2.5cm]{./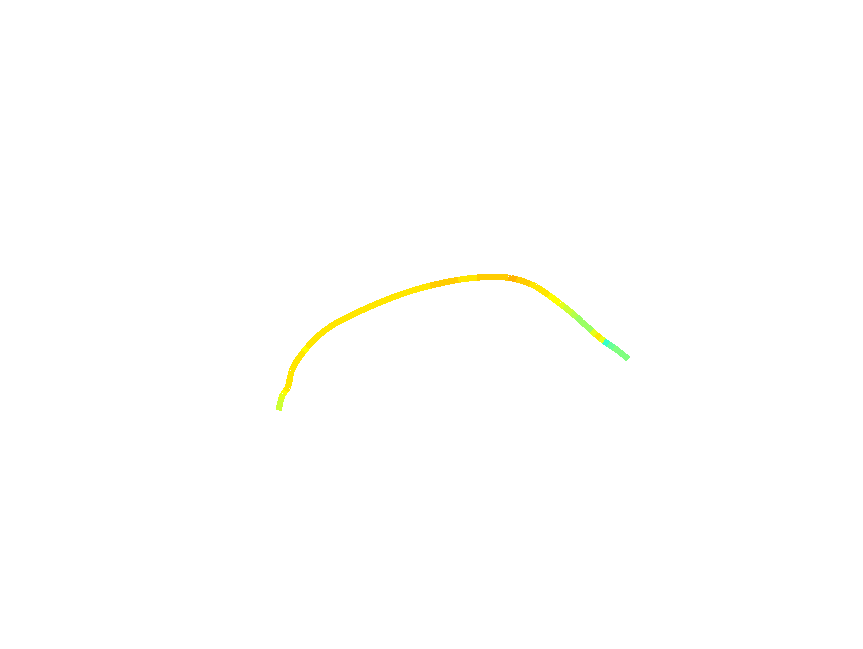}&
\includegraphics[trim = 40mm 20mm 40mm 40mm ,clip,width=2.5cm]{./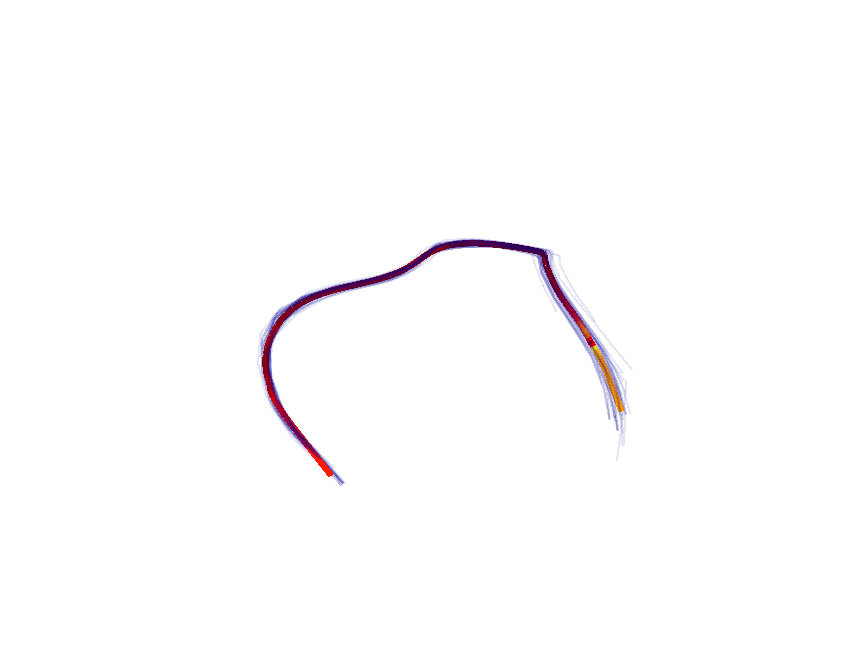}&
\includegraphics[width=0.5cm,height=2.5cm]{./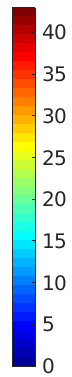}\\
$t=0$ & $t=1/3$ & $t=2/3$ & $t=1$ &
\end{tabular}
\end{center}
\caption{Matching result of a single curve (left) onto a bundle of 42 curves (superimposed in transparent blue on the right image). Here, the color represents the weight on the deforming source curve. The matching at $t=1$ leads to both a reasonable estimate of the average geometry of the bundle and an estimated weight function that provides a measure of the local density of curves in the bundle.}
\label{fig:match_1bundle}
\end{figure}

\section{Conclusion and outlook}
In this paper, we introduced an extended mathematical model and algorithm for the estimation of higher-order Sobolev metrics and their corresponding geodesics between shape graphs. We also proposed to resolve the issue of partial matching constraints in the situation of inconsistent topological structures through the additional estimation of a weight function defined on the source shape graph. We proved the well-posedness of the variational problem and derived an optimization scheme to numerically estimate its solutions. One of the main advantages of our variational framework is that it does not require optimizing explicitly over permutation or reparametrization groups.

There are however some remaining limitations to this approach. Firstly, it is a priori mainly suited for cases in which the target graph structure is topologically a subset of the source, as the estimation of vanishing weights allows us to remove specific portions of the source shape graph during the matching. As we briefly pointed out earlier, the opposite scenario in which the source shape graph has to be matched to a subset of the target could be addressed almost \textit{mutatis mutandis} by instead defining the weights on the target shape graph. Although we opted not to present results in this alternative setting, our open source implementation does in fact encompass this case as well. A much less obvious situation however is the symmetric problem in which one expects missing parts on both the source and target simultaneously. While extending our formulation to estimate weights on both shape graphs may seem a natural solution, one difficulty is to come up with an appropriate penalty function to avoid the trivial solution in which both shapes are entirely erased. Another possible approach, which was illustrated with Fig. \ref{fig:match_masscreatedeleteroot_source_weight}, is to rather model mass creation and deletion on the source shape graph, a downside being that this requires some prior information on the branches or parts of the source that will need to be created. We intend to further explore these different ideas in future work.

A second avenue for improvement would be to develop a faster numerical pipeline to estimate the distance and/or geodesics between shape graphs in view of applications of the method to the statistical analysis of large shape datasets. This could be achieved for instance by focusing on first-order Sobolev metrics (at the expense of solid theoretical results on the existence of solutions), for which the computation of the metric can be significantly simplified thanks to the availability of a general simplifying transformation known as the $F^{a,b}$ transform \cite{needham2020simplifying,sukurdeep2019inexact}. Alternatively, we plan to investigate supervised deep learning approaches as a way to replace our optimization procedure by a simple forward pass through an appropriately trained neural network. Although still in their infancy within the field of elastic shape analysis, such methods have recently shown some success in simpler settings \cite{nunez2020deep,hartman2021supervised}.

\appendix
\section{Proof of Lemmas~\ref{lemma:var_norm_continuity} and~\ref{lemma:var_norm_continuity2}}
\begin{proof}[Proof of Lemma~\ref{lemma:var_norm_continuity}]
 By assumption we have $c_p(\theta) \rightarrow c_{\infty}(\theta)$ and $\partial_{\theta}c_p(\theta) \rightarrow \partial_{\theta}c_{\infty}(\theta)$ uniformly on $D$ with $\partial_{\theta} c_p(\theta)\neq 0$ and $\partial_{\theta} c(\theta)\neq 0$ for all $n$ and $\theta \in D$. As $\mathcal{H}$ is continuously embedded into $C^1_0(\R^{d}\times S^{d-1})$, we have $\|\omega\|_{1,\infty} \leq C_{\H} \|\omega\|_{\H}$ for all $\omega \in \H$ and it results that: 
 \begin{align*}
    \|\mu_{c_p} - \mu_{c_\infty}\|_{\V} = \sup_{\|\omega\|_{\H} \leq 1} (\mu_{c_p} - \mu_{c_\infty} | \omega) &\leq \sup_{\|\omega\|_{1,\infty} \leq C_\H} (\mu_{c_p} - \mu_{c_\infty} | \omega) \leq C_\H \sup_{\|\omega\|_{1,\infty} \leq 1} (\mu_{c_p} - \mu_{c_\infty} | \omega).
 \end{align*}
 Therefore, we only need to show that $\sup_{\|\omega\|_{1,\infty} \leq 1} (\mu_{c_p} - \mu_{c_\infty} | \omega) \rightarrow 0$ as $p \rightarrow +\infty$. Let $\omega \in C^1_0(\R^{d}\times \Sp^{d-1})$ with $\|\omega\|_{1,\infty} \leq 1$. We have:
\begin{align*}
 &|(\mu_{c_n} - \mu_{c} | \omega)| \leq \int_D \left| \omega\left(c_p(\theta),\frac{\partial_{\theta} c_p(\theta)}{|\partial_{\theta} c_p(\theta)|}\right) |\partial_{\theta} c_p(\theta)| - \omega\left(c_\infty(\theta),\frac{\partial_{\theta}c_\infty(\theta)}{|\partial_{\theta}c_\infty(\theta)|}\right) |\partial_{\theta}c_\infty(\theta)| \right| d\theta \\
 &\leq \int_D \left| \omega\left(c_p(\theta),\frac{\partial_{\theta} c_p(\theta)}{|\partial_{\theta} c_p(\theta)|}\right) \right| \bigr| |\partial_{\theta} c_p(\theta)| - |\partial_{\theta}c_\infty(\theta)| \bigr| d\theta \\ 
 &\phantom{\leq}+  \int_D \left| \omega\left(c_p(\theta),\frac{\partial_{\theta} c_p(\theta)}{|\partial_{\theta} c_p(\theta)|}\right) - \omega\left(c_\infty(\theta),\frac{\partial_{\theta}c_\infty(\theta)}{|\partial_{\theta}c_\infty(\theta)|}\right) \right| |\partial_{\theta}c_\infty(\theta)| d\theta.
\end{align*}
Denoting by $\lambda^1(D)$ the Lebesgue measure of $D$ and $\ell_{c_\infty}$ the total length of $c_\infty$:
\begin{align*} 
 &|(\mu_{c_n} - \mu_{c} | \omega)| \leq \lambda^1(D) \|\omega\|_{\infty} \|\partial_{\theta} c_p - \partial_{\theta}c_\infty\|_{\infty} + \ell_{c_\infty} \|\omega\|_{1,\infty} \|c_p - c_\infty\|_{1,\infty}) \\
 &\leq \text{Cte} \ \|\omega\|_{1,\infty} \|c_p - c_\infty\|_{1,\infty} \leq \text{Cte} \ \|c_p - c_\infty\|_{1,\infty}
\end{align*}
where $\text{Cte}$ is a constant that only depends on $D$ and $c$. Therefore we get $\sup_{\|\omega\|_{1,\infty} \leq 1} |(\mu_{c_p} - \mu_{c} | \omega)| \leq \text{Cte} \|c_p - c\|_{1,\infty} \rightarrow 0$ and thus $\|\mu_{c_p} - \mu_{c}\|_{\V} \rightarrow 0$ as $p \rightarrow +\infty$.
\end{proof}

\begin{proof}[Proof of Lemma~\ref{lemma:var_norm_continuity2}]
 Similar to the proof of Lemma \ref{lemma:var_norm_continuity}, we only need to show that for all $k=1,\ldots,K$, $\sup_{\|\omega\|_{1,\infty} \leq 1} (\rho^k_m \cdot \mu_{c^k_m} - \rho^k \cdot \mu_{c^k} | \omega) \rightarrow 0$ as $m \rightarrow +\infty$. If $\omega \in C^1_0(\R^{d}\times \Sp^{d-1})$ with $\|\omega\|_{1,\infty} \leq 1$, we have:
 \begin{align*}
  &|(\rho^k_m \cdot \mu_{c^k_m} - \rho^k \cdot \mu_{c^k} | \omega)| \\ 
  &\leq \int_0^1 \left|\omega\left(c^k_m(\theta),\frac{\partial_{\theta}c^k_m(\theta)}{|\partial_{\theta} c^k_m(\theta)|}\right) \rho^k_m(\theta) |\partial_{\theta} c^k_m(\theta)| - \omega\left(c^k(\theta),\frac{\partial_{\theta} c^{k}(\theta)}{|\partial_{\theta}c^{k}(\theta)|}\right) \rho^k(\theta) |\partial_{\theta}c^{k}(\theta)|  \right| d\theta \\
  &\leq \underbrace{\int_0^1 \left|\omega\left(c^k_m(\theta),\frac{\partial_{\theta}c^k_m(\theta)}{|\partial_{\theta}c^k_m(\theta)|}\right) \rho^k_m(\theta) |\partial_{\theta}c^k_m(\theta)| - \omega\left(c^k_m(\theta),\frac{\partial_{\theta}c^k_m(\theta)}{|\partial_{\theta}c^k_m(\theta)|}\right) \rho^k(\theta) |\partial_{\theta}c^k(\theta)| \right| d\theta}_{\doteq (1)} \\
  &\phantom{aaa}+ \underbrace{\int_0^1 \left|\omega\left(c^k_m(\theta),\frac{\partial_{\theta}c^k_m(\theta)}{|\partial_{\theta}c^k_m(\theta)|}\right) \rho^k(\theta) |\partial_{\theta}c^k(\theta)| - \omega\left(c^k(\theta),\frac{\partial_{\theta}c^k(\theta)}{|\partial_{\theta}c^k(\theta)|}\right) \rho^k(\theta) |\partial_{\theta}c^k(\theta)|  \right| d\theta}_{\doteq (2)}
 \end{align*}
 We can then derive the following upper bounds for each term:
 \begin{align*}
  (1) &\leq \|\omega\|_{\infty} \int_0^1 \left|\rho^k_m(\theta) |\partial_{\theta}c^k_m(\theta)| - \rho^k(\theta) |\partial_{\theta}c^k(\theta)| \right| d\theta \\
  &\leq \int_0^1 |\rho^k_m(\theta) - \rho^k(\theta)| |\partial_{\theta}c^k_m(\theta)| d\theta + \int_0^1 \rho^k(\theta) |\partial_{\theta}c^k_m(\theta) - \partial_{\theta} c^k(\theta)| d\theta \\
  &\leq \left(\sup_{m\in \mathbb{N}} \|c^k_m\|_{1,\infty}\right) \|\rho^k_m - \rho^k\|_{L^1} + \|\rho^k\|_{L^1}  \|c^k_m -c^k\|_{1,\infty}.
 \end{align*} 
For the second term, we have
 \begin{align*}
  (2) &\leq \int_0^1 \left|\omega\left(c_m^k(\theta),\frac{\partial_{\theta}c_m^k(\theta)}{|\partial_{\theta}c_m^k(\theta)|}\right) - \omega\left(c^k(\theta),\frac{\partial_{\theta} c^k(\theta)}{|\partial_{\theta} c^k(\theta)|}\right)\right| \rho^k(\theta) |\partial_{\theta}c^k(\theta)| d\theta
 \end{align*} 
 and the same arguments as in the proof of Lemma \ref{lemma:var_norm_continuity} give that $(2) \leq \text{Cte} \ \|c^k_m - c^k\|_{1,\infty}$.
 
Since $\|c^k_m - c^k\|_{1,\infty}$ and $\|\rho^k_m - \rho^k\|_{L^1}$ both converge to $0$, it follows that $\sup_{\|\omega\|_{1,\infty} \leq 1} (\rho^k_m \cdot \mu_{c^k_m} - \rho^k \cdot \mu_{c^k} | \omega) \rightarrow 0$ as $m \rightarrow +\infty$.  
\end{proof}

\bibliographystyle{siamplain}
\bibliography{references}
\end{document}